\DeclareRobustCommand{\rchi}{{\mathpalette\irchi\relax}}
\newcommand{\irchi}[2]{\raisebox{\depth}{$#1\chi$}} % inner command, used by \rchi
\renewcommand{\Set}[1]{\left\{#1\right\}}
\definecolor{antiquewhite}{rgb}{0.98, 0.92, 0.84}
\DeclareFontFamily{OT1}{pzc}{}
\DeclareFontShape{OT1}{pzc}{m}{it}{<-> s * [1.10] pzcmi7t}{}
\DeclareMathAlphabet{\mathpzc}{OT1}{pzc}{m}{it}
\def\np{\par\noindent}
\def\bc{{\mathbb C}}
\def\bn{{\mathbb N}}
\def\br{{\mathbb R}}
\def\bz{{\mathbb Z}}
\def\zcW{\mathpzc{W}} 
\def\cb{{\mathcal B}}
\def\ch{{\mathcal H}}
\def\cs{{\mathcal S}}
\def\ct{{\mathcal T}}
\def\a{\alpha}
\def\G{\Gamma}
\def\g{\gamma}
\def\r{\rho}
\def\s{\sigma}
\def\t{\tau}
\def\vecLength{\boldsymbol{\ell}}
\title[A Note on Twisted Crossed Products and Spectral Triples]{A Note on Twisted Crossed Products \\ and Spectral Triples}
\date{\today}
\author{P. Antonini}
\address{Universit\`a del Salento, I-73100 Lecce}
\email{paolo.antonini@unisalento.it}
\author{D. Guido}
\address{Universit\`a degli Studi di Roma \textquotedblleft Tor Vergata\textquotedblright\,  I-00133 Roma }
\email{guido@mat.uniroma2.it}
\author{T. Isola}
\address{Universit\`a degli Studi di Roma \textquotedblleft Tor Vergata\textquotedblright\,  I-00133 Roma }
\email{isola@mat.uniroma2.it}
\author{A. Rubin}
\address{Scuola Internazionale Superiore di Studi Avanzati (SISSA),  I-34136 Trieste}
\email{alrubin@sissa.it}
\keywords{}
\thanks{}
\subjclass[]{}
\numberwithin{equation}{section} %% needs `amsmath' package
\theoremstyle{plain} %% needs `amsmath' package
\newtheorem{thm}{Theorem}[section]
\newtheorem{setup}{Setup}[section]
\newtheorem{lemma}[thm]{Lemma}
\newtheorem{prop}[thm]{Proposition}
\theoremstyle{definition} %% needs `amsmath' package
\newtheorem{defn}[thm]{Definition}
\newtheorem{exa}[thm]{Example}
\newtheorem*{notat}{Conventions}
\newtheorem*{ack}{Acknowledgments}
\theoremstyle{remark} %% needs `amsmath' package
\newtheorem{rmk}[thm]{Remark}
\DeclareMathOperator{\Dom}{Dom}   %% domain of an operator
\DeclareMathOperator{\End}{End}   %% endomorphism algebra
\DeclareMathOperator{\tr}{tr}     %% matrix trace
\newcommand{\eps}{\varepsilon} %% short for \varepsilon
\newcommand{\sg}{\sigma}      %% short for \sigma
\newcommand{\f}{\varphi}      %% short for \varphi
\newcommand{\rest}[1]{\big\rvert_{#1}} % restriction e.g. to boundary
\newcommand{\A}{\mathcal{A}}  %% an algebra
\newcommand{\C}{\mathbb{C}}   %% complex numbers
\newcommand{\D}{\mathcal{D}}  %% a selfadjoint operator
\renewcommand{\H}{\mathcal{H}}  %% a Hilbert space
\newcommand{\N}{\mathbb{N}}   %% natural numbers
\newcommand{\Tt}{\mathcal{T}} %%Toeplitz
\newcommand{\Z}{\mathbb{Z}}   %% integers
\newcommand{\bT}{\mathbb{T}}     %% 1-dim torus
\def\Ind{\operatorname{Ind}}  %% index of an element
\def\End{\operatorname{End}}
\newcommand{\norm}[1]{\left\lVert#1\right\rVert}
\newcommand{\abs}[1]{\left\lvert#1\right\rvert}
\newcommand\xqed[1]{%
	\leavevmode\unskip\penalty9999 \hbox{}\nobreak\hfill
	\quad\hbox{#1}}
\newcommand\demo{\xqed{$\Box$}}
\definecolor{darkgreen}{cmyk}{1,0,1,.2}
\definecolor{m}{rgb}{1,0.1,1}
\definecolor{green}{cmyk}{1,0,1,0}
\definecolor{darkred}{rgb}{0.55, 0.0, 0.0}
\definecolor{test}{rgb}{1,0,0}
\definecolor{cmyk}{cmyk}{0,1,1,0}
\begin{document}

	\begin{abstract}
		Starting with a spectral triple on a unital $C^{*}$-algebra $A$ with an action of a
		discrete group $G$, if the action is uniformly bounded (in a Lipschitz sense) a spectral triple on the reduced crossed product $C^{*}$-algebra $A\rtimes_{r} G$ is constructed in \cite{hawkins2013spectral}. The main instrument is the Kasparov external product. We note that this construction still works for twisted crossed products when the twisted action is uniformly bounded in the appropriate sense. Under suitable assumptions we discuss some basic properties of the resulting triples: summability and regularity. Noncommutative coverings with finite abelian structure group
				 are among the most basic, still interesting, examples of twisted crossed products; we describe their main features.
	\end{abstract}

	\maketitle

	\tableofcontents

	%%%%%%%%%%%%%%%%%%%%%%%%%%%%%%%%%%%%%%%%%%%%%%%%%%%%%%%%%%%%%%%%%%%%%%%%%%%%%%%%%
	\section{Introduction}
\noindent 	
Spectral triples on $C^*$-algebras are a central notion in noncommutative geometry, being modeled on the geometric structure  codified by the properties of a Dirac type operator on a smooth manifold. The range of applicability 
	of this paradigm is vast, going from the foundational example of Spin manifolds to foliated manifolds, group $C^*$-algebras, quantum groups, quantum deformations and  fractals. This list is not exhaustive and we address the reader to \cite{connes1994noncommutative,gracia2013elements,connes1995noncommutative,connes1996gravity,connes2013spectral,connes2019noncommutative}, the recent works \cite{MR4328052,MR4358538} and the references therein.	
	
	Following the stream pursued by many authors \cite{hawkins2013spectral,paterson2014contractive,bellissard2010dynamical,iochum2014crossed}, in this paper we construct and study spectral triples on reduced twisted crossed products $A\rtimes^{\rho,\sigma}_rG$, where $A$ is a  unital $C^*$-algebra, $G$ a discrete group and $(\rho,\sigma)$ a twisted action in the sense of Busby and Smith \cite{busby1970representations}. Two facts brought us to consider twisted crossed products: on the one hand, the crossed product construction is fundamental in  the theory of $C^*$-algebras and in
		noncommutative topology since it replaces the operation of forming a quotient when this is a singular, bad-behaved space.
		On the other hand, the study of noncommutative coverings  with finite abelian structure group (that two of the authors developed in \cite{aiello2017spectral} as \emph{building blocks} of solenoidal noncommutative metric spaces) brings naturally to the conclusion that in the case of finite abelian groups, any twisted crossed product is a regular covering and any regular covering is a twisted crossed product (Theorem \ref{isocrossed}, previously proven with a different language in \cite{wagner2015noncommutative}). 	
		This makes the problem of the construction of spectral triples on crossed products a natural subject of interest both for the creation of a new large class of examples and for the analysis of the Connes axioms in the case of noncommutative quotient manifolds \cite{connes1996gravity,connes1995noncommutative,connes2013spectral}.

	In constructing a spectral triple on a twisted crossed product, we follow, as in \cite{hawkins2013spectral}, the guiding principle of the Kasparov external product, combining the given Dirac operator on $A$ with a length function on the group. We have decided to utilize matrix valued length functions  because, as pointed out in \cite{hawkins2013spectral}, these turn out to be useful in the procedure of iterating crossed products. \\
	
	We pass now to describe the content and the results of the paper  giving more details and some comments. 
	
	In section 2 we follow \cite{busby1970representations,packer1989twisted, bedos2011discrete, thiang2016k} and give a concise account on the basic theory of twisted actions of discrete groups on unital $C^*$-algebras, twisted covariant representations and  the twisted crossed product construction.
	 The choice of considering the unital case is dictated by the interest in constructing unital spectral triples and simplifies a bit the presentation. 
	 
 In section 3 we construct spectral triples on twisted crossed products. We begin by discussing generalities about spectral triples and translation bounded and proper matrix valued length functions. Here and in the following $V$ will always denote a finite dimensional Hermitian vector space so that matrix valued length functions will be maps
	  $\bm{\ell}:G \rightarrow \mathbb{B}_h(V)$ on the discrete group $G$ and valued in the finite dimensional selfadjoint operators.
	  Such maps have been
	   introduced in \cite{hawkins2013spectral} and generalise scalar length functions on groups. With them we can create spectral triples on the group algebra $\mathbb{C}G$ of the form $(\mathbb{C}G,\ell^2(G) \otimes V,M_{\bm{\ell}})$ with Dirac operator having as coefficients 
	   elements of $\mathbb{B}_h(V)$.
	 The triple $(\mathbb{C}G,\ell^2(G) \otimes V,M_{\bm{\ell}})$ is the prototypical triple on a crossed product and we study  its summability properties. 
	 To do so, we first propose 
	 (definition \ref{Pgrowth})
	 the notion of polynomial growth for a discrete (finitely generated) group with respect to a length function which is matrix valued. This naturally extends the usual concept of polynomial growth with respect to some word metric.
	 We then show (proposition \ref{growth}) that 
	 the triple 
$(\mathbb{C}G,\ell^2(G) \otimes V,M_{\bm{\ell}})$ is
	finitely summable if and only if the group has polynomial growth with respect to $\bm{\ell}$.
	
	The construction of spectral triples on twisted crossed products is the content of Theorem \ref{spectraltripleconstruction} and generalizes the one in \cite{hawkins2013spectral}. 
	Given a group with a proper, translation bounded matrix valued length function $(G,\bm{\ell})$, a twisted action $(\rho,\sigma)$ of $G$ on the $C^*$-algebra $A$ and a spectral triple $(\mathcal{A},H,D)$  on $A$, our basic assumptions (summarized in the setup \ref{setuptwisted}) to  extend this triple to $A\rtimes_{r}^{(\rho,\sigma)}G$ are the following:
	\begin{enumerate}
	\item the twisted action is smooth. This means that the twisting pair $(\rho,\sigma)$ is valued in the smooth algebra,
	\item a uniform boundedness condition for the action:
	\begin{equation}\nonumber
			\sup_{x\in G}\| [D,\pi(\rho_x(a)\sigma_{x,y})]\|<\infty
		\end{equation}
 is satisfied	for every $a\in \mathcal{A}$ and $y\in G$. 
			\end{enumerate}
The second condition above is the twisted version of the one appearing in \cite{hawkins2013spectral}; it
ensures boundedness of the commutators between the Dirac operator and the elements of the (smooth) twisted convolution algebra. Moreover, we prove that if the starting triple on $A$ is finitely summable and the group has polynomial growth with respect to the length function, then also the extended triple is finitely summable. The abscissa of convergence is bounded by the sum of the abscissa of convergence of the starting triple and the group growth.

In the remaining part of the section we examine the particularly favorable, still interesting, situation in which  the representation $\pi:A \rightarrow \mathbb{B}(H)$ associated to the starting triple $(A,H,D)$ is also twisted covariant with respect to the twisted action $\rho$ of $G$ on $A$ and the Dirac operator has bounded commutators with the unitaries $U_g$ implementing the group action on $H$. 
Under this assumption, we describe what we call the \emph{equivariant construction}. This was considered in remark 2.9 in {\cite{hawkins2013spectral}},
where it was related to the equivariant Kasparov product. 
In this case a second construction of a spectral triple on a twisted crossed product can be given. It differs from the previous one only in the representation so that, under a condition of uniform boundedness of the commutators between the Dirac operator and the unitaries $U_g$, this second construction is a bounded perturbation of the first one (up to unitary equivalence). In particular the $K$-homological information carried by the two is unchanged.
Using the natural morphism $\phi_A^r:A\rtimes_r^{\rho,\sigma}G \rightarrow (A\rtimes^{\rho,\sigma}G)\otimes C^*_r(G)$,
we notice 
(proposition \ref{productrestriction})
that the spectral triple obtained in the equivariant case can be seen as the restriction of an external product of two spectral triples: the first one is a triple on the algebra $B$ given by the image of the twisted crossed product under the integrated form of the starting representation, and the second one is the (standard) spectral triple $(C^*_r(G),\ell^2(G) \otimes V,M_{\bm{\ell}})$.

In section 4 we continue the study of the spectral triple constructed in the equivariant case. Since the triple is a restriction of an exterior product and the product of regular triples is regular, we can examine its regularity properties. After giving some generalities about regularity of spectral triples, we prove theorem \ref{maintheorem} giving sufficient conditions ensuing the regularity of the spectral triple in the equivariant construction.
	The regularity of a spectral triple being a strong property, these assumptions are, though natural, quite restrictive and demand a strict compatibility 
	between $D$ and the group action. Nevertheless, interesting examples can be found even in the case of ordinary (untwisted) crossed products.

Section 5 is devoted to our main example of a twisted crossed product: noncommutative coverings with finite abelian structure group,  	which consist of an action of a finite abelian group $G$ on a $C^*$-algebra $B$. Then we say that $B$ \emph{covers} the algebra of fixed points $A:=B^{G}$.  We study in detail the structure of these noncommutative coverings, reserving particular attention to the notion of {\em{rank}-$1$ \em{regularity}}, a property formulated in terms of the spectral subspaces of the action, that was previously called {\em{regularity}} in \cite{aiello2017spectral}. In particular, we relate it with the concept of freeness of the group action and we give examples. When the action is \em{rank}-$1$ \em{regular}, a twisted action of the dual group $\widehat{G}$ can be found so that $B\cong A\rtimes_r^{\rho,\sigma} \widehat{G}$. The last section gives an example of the construction of a spectral triple on a twisted crossed product.

	%	that if $U_g \in \operatorname{Op}^0(\Delta)$, if   $[D,U_g]\in \operatorname{Op}^0(\Delta)$, 	
%$$ 
%[\underbrace{\Delta,[\Delta, [\Delta, \dots [\Delta}_{k\textrm{\, times}},U_g  ]\,]\,]\in \operatorname{Op}^k(\Delta) \quad 	\textrm{and}\quad  [\underbrace{\Delta,[\Delta, [\Delta\dots}_{k\textrm{\, times}}\,,  [D,U_g  ]\,]\,]\in \operatorname{Op}^k(\Delta)
%$$ 

	Appendix A contains some facts  about the infinitesimal order of compact operators that have been used in section \ref{sec:4}, and Appendix B deals with the notion of regularity for spectral triples.
		\begin{notat}
		All Hilbert spaces and $C^{*}$-algebras in this paper are separable; all algebras are complex and unital. 
	\end{notat}

	%%%%%%%%%%%%%%%%%%%%%
\section{Twisted Crossed Products}\label{Appendix 1}
We briefly recall some basic facts about twisted crossed products following mainly \cite{busby1970representations,packer1989twisted, bedos2011discrete, thiang2016k}. We denote by $A$ a separable unital $C^{*}$-algebra and by $G$ a discrete group with neutral element $e$. 

\begin{defn}\label{covrep}A \emph{twisting pair} $\left(\rho,  \sigma\right)$ for $A$ and $G$ is a couple of maps $\rho\colon G\rightarrow \textup{Aut}(A)$ and $\sigma\colon G\times G\rightarrow U(A)$ satisfying:
\begin{enumerate}  
	\item $ \rho_x(\sigma_{y,z}) = \sigma_{x,y}\sigma_{xy,z}\sigma_{x,yz}^*$
	\item $ \rho_x \circ \rho_y = \operatorname{Ad}(\sigma_{x,y}) \circ \rho_{xy}$  
	\item $\sigma_{x,e}=\sigma_{e,x}=1$, $\rho_e=\operatorname{id}_A$.
\end{enumerate}
for all $x,y,z \in G$.	We call $(A,G,\rho,\sigma)$  a \emph{twisted dynamical system}.	The map $\sigma$ is called a \emph{2-cocycle} with  values in $U(A)$.
\end{defn}

Given a twisted dynamical system $(A,G,\rho,\sigma)$, we define a twisted convolution $\ast$-algebra structure on the space
$C_c(G,A)$
of the finitely supported functions from $G$ to $A$ by
$$(f \star g)(x)=\sum_{y\in G}f(y)\rho_y(g(y^{-1}x))\sigma_{y,y^{-1}x}, \quad f^*(x)=\sigma_{x,x^{-1}}^* \rho_{x}(f(x^{-1})^*),$$
for $f,g \in C_c(G,A)$.
In terms of the $\delta$-functions, for $x,y \in G$ we have:
$$(a\delta_x) \star (b\delta_y)=a\rho_x(b)\sigma_{x,y} \delta_{xy}, \quad (a\delta_x)^*=\sigma^*_{x^{-1},x}\rho_{x^{-1}}(a^*) \delta_{x^{-1}}.$$ 
Such rules extend to $L^1(G,A)$ making it the Banach $\ast$-algebra $L^1(A,G,\rho,\sigma)$. 

\begin{defn}
	A \emph{twisted covariant representation} of $(A,G,\rho,\sigma)$ on a Hilbert space $H$ is a couple $(\pi,U)$, where $\pi\colon A\rightarrow \mathbb{B}(H)$ is a non degenerate representation  and $U:G \rightarrow U(H)$ satisfies 
	\begin{enumerate}
		\item $U_x  U_y =\pi(\sigma_{x,y})  U_{xy}$,
		\item $
	\pi \circ \rho_x= \operatorname{Ad}(U_x) \circ \pi,$ 
	\end{enumerate}
	for every $x,y \in G$.
\end{defn}
It follows from the first relation that $U_e=\operatorname{Id}$. We have a $1-1$ correspondence between twisted covariant representations and non degenerate representations of $L^1(A,G,\rho,\sigma)$: to the twisted covariant representation $(\pi,U)$ one associates its {\em{integrated form}}
$\Pi: L^1(A,G,\rho,\sigma) \rightarrow \mathbb{B}(H)$ such that
\begin{equation}\label{intform}
	\Pi \Big{(}\sum_{x\in G} a_x\delta_x \Big{)}:= \sum_{x\in G}\pi(a_x) U_x.
\end{equation}

\begin{defn}
	A twisted crossed product for $(A,G,\rho,\sigma)$ is the following universal object: a $C^*$-algebra $B$ equipped with a unital morphism $i_{A}:A \rightarrow B$ and a map $i_{G}:G \rightarrow U(B)$ such that
	\begin{enumerate}
		\item The couple $(i_{A},i_{G})$ is covariant in the sense that
		\begin{displaymath}
		\quad i_{G}(x)i_{G}(y)=i_{A}(\sigma_{x,y})i_{G}(xy)
\quad \textrm{and} 	\quad 
			i_{A} \circ \rho_x = \operatorname{Ad}(i_{G}(x))\circ i_{A} \quad 	\end{displaymath} 
		for every $x,y \in G.$
		\item For every covariant representation $(\pi,U)$ of $(A,G,\rho,\sigma)$ on the Hilbert space $H$ there is a non degenerate representation $\pi \times U :B \rightarrow \mathbb{B}(H)$ making the diagram
		\begin{displaymath}
			\xymatrix{G\ar@/_1pc/[dr]_{U}\ar[r]^{i_{G}} & B\ar[d]^{\pi \times U} & A\ar@/^1pc/[dl]^{\pi}\ar[l]_{i_{A}} \\
				{}&\mathbb{B}(H)& {}}
		\end{displaymath}
		commute. 
			\end{enumerate}
\end{defn}

A twisted crossed product exists and is unique  up to isomorphisms which take into account also the couple $(i_{A},i_{G})$ (see \cite{packer1989twisted}). It will be denoted with $A \rtimes^{\rho,\sigma} G$ or, when the context is clear, just with $A \rtimes
G$. Moreover, under our assumptions (making the algebra $L^1(A,G, \rho, \sigma)$ unital), it is generated by the image of the integration map 
\begin{displaymath}
	i_{A} \times i_{G}: L^1(A,G, \rho, \sigma) \rightarrow A \rtimes^{\rho,\sigma}G, \quad i_{A}\times i_{G}\left(\sum_{x\in G} a_x\delta_x \right)=\sum_{x\in G}i_{A}(a_x)i_{G}(x)
\end{displaymath}
and coincides with the enveloping $C^*$-algebra $C^*(L^1(A,G, \rho, \sigma))$. In particular the representation $\pi \times U$ induced by 
any covariant representation is equal, on the image of $i_{A}\times i_{G}$, to the representation \eqref{intform} and will be called the \emph{integrated form} of $(\pi,U)$ as well.

\begin{exa}[Clifford Algebras]As proved in \cite{albuquerque2002clifford}, we can regard (complex) Clifford algebras as twisted crossed products of $\mathbb{C}$ with a suitable finite group. Indeed, $\mathbb{C}\ell_{n}$ is the complex algebra generated by the skew-adjoint anti-commuting elements $e_{1}, \dots, e_{n}$ such that $e^{2}_{i}=-1$ and graded in the standard way. Consider the cyclic multiplicative group $\mathbb{Z}_{2}$ and let $g=-1$ be its generator; any element in $G=\mathbb{Z}_{2}^{n}$ is of the form $x = (g^{x_{1}},\dots ,g^{x_{n}})$ where $x_{i}=0,1\in \mathbb{Z}$ for $i=1,\dots,n$. Define 
	\begin{displaymath}
		\sigma_{n}(x,y) = (-1)^{\sum_{j< i}x_{i}y_{j}}
	\end{displaymath}
	for $x,y\in G$ and let $\mathbb{C}\rtimes^{\textup{id},\sigma_{n}}G$ be the twisted group algebra for the trivial action of $G$ on $\mathbb{C}$. Then the map $\mathbb{C}\rtimes^{\textup{id},\sigma_{n}}G\rightarrow \mathbb{C}\ell_{n}$, given on a basis by
	\begin{displaymath}
		(g^{x_{1}},\dots, g^{x_{n}})\longmapsto (ie_{1})^{x_{1}}\cdots (ie_{n})^{x_{n}},
	\end{displaymath}
	is an isomorphism of $C^{*}$-algebras. 
		\demo
\end{exa}

 We will be concerned with the {\em{reduced twisted crossed product}} which is defined by a particular class of representations.
One starts with a faithful representation $\pi:A \rightarrow \mathbb{B}(H)$, then a canonical 
covariant representation $(\widetilde{\pi},\widetilde{R})$ on the Hilbert space $\ell^2(G,H)$ is defined by 
\begin{displaymath}
	\widetilde{\pi}(a)(\xi \otimes \delta_x)=\pi(\rho_x(a))\xi \otimes \delta_x, \quad \widetilde{R}_x(\xi\otimes \delta_y)=\pi(\sigma_{yx^{-1},x})\xi\otimes \delta_{yx^{-1}} 
\end{displaymath} given on simple tensors of $\ell^2(G,H)\simeq H\otimes \ell^{2}(G)$ and for $a\in A$.
This is the {\em{right induced covariant representation}};
its integrated form $\widetilde{\pi}\times \widetilde{R}: A\rtimes^{\rho,\sigma} G \rightarrow \mathbb{B}(\ell^2(G,H))$ reads as
\begin{displaymath}
	\widetilde{\pi} \times \widetilde{R}\Big{(}\sum_{x\in G}a_x\delta_x\Big{)} (\xi \otimes \delta_y)= \sum_{x\in G}\pi(\rho_{yx^{-1}}(a_x)\sigma_{yx^{-1},x})\xi \otimes \delta_{yx^{-1}},
\end{displaymath}
on the elements $\sum_{x\in G}a_x\delta_x \in L^1(A,G,\rho,\sigma)$. It is called the {\em{regular representation}} of the twisted crossed product.
\begin{defn}
	The reduced twisted crossed product
	$A \rtimes^{\rho,\sigma}_r G$ (or just $A \rtimes_r G$)
	is the $C^*$-algebra image of  the regular representation.
\end{defn}
It can be shown that the reduced crossed product is independent of the choice of the faithful representation $\pi$ \cite[remark 3.12]{packer1989twisted}.\\

We have presented the theory with the right induced representations as in  \cite{busby1970representations,packer1989twisted}. However, for the construction of spectral triples it will be useful, to reconcile with the existing literature, to consider a different version of the regular representation. This will be obtained by conjugation with the unitary involution $V_{G}\colon\ell^2(G,H) \rightarrow \ell^2(G,H)$ given by $V_{G}(\xi\otimes \delta_x)=\xi\otimes \delta_{x^{-1}}$. We get in this way, 
out of the faithful representation $\pi:A\rightarrow \mathbb{B}(H)$,
the representation
$\Pi: A \rtimes^{\rho,\sigma} G \rightarrow \ell^2(G,H),$ given by
\begin{equation}\label{left}
	\Pi(a\delta_x)(\xi\otimes \delta_y)= \pi(\rho_{y^{-1}x^{-1}}(a)\sigma_{y^{-1}x^{-1},x})\,\xi \otimes \delta_{xy},
\end{equation}
which is the integrated form of  
a covariant representation  $(\widetilde{\pi},\widetilde{L})$ with
\begin{equation}\label{leftpair}
	\widetilde{\pi}(a)(\xi \otimes\delta_x)=\pi(\rho_{x^{-1}}(a))\xi\otimes \delta_x, \quad \widetilde{L}_x (\xi\otimes \delta_y)=\pi(\sigma_{y^{-1}x^{-1},x})\xi \otimes \delta_{xy}.
\end{equation}

So in the following
we shall consider the reduced crossed product as defined by $\Pi$ and call it the {\em{left regular representation}} (induced by $\pi$). 
This version of the reduced crossed product is of course isomorphic to the one given before.

Observe that the previous construction of ($\widetilde{\pi}$,$\widetilde{L}$) can be done even though the representation $\pi$ is not faithful. However, in this case, it does not define the reduced crossed product.

%%%%%%%%%%%%%%%%%%%%%%%%%%%%%%%%%%%%%%%%%%%%%%%%%%%%%%%%%%%%%%%%%%%%%%%%%%%%%%%%%%%%%%%%%%%%%%%%%%%%%%%%%%%%%%%%%%%%%%%%%%%%%%%%%%%%%%%%	
\section{Spectral Triples on Twisted Crossed Products}\label{sec:4}
\noindent In this section we show how to construct a spectral triple on a twisted crossed product $A \rtimes^{\rho,\sigma}_r G$ starting from a spectral triple on $A$ satisfying some assumptions.
The idea, as in \cite{hawkins2013spectral}, is to borrow the Dirac operator from an unbounded representative for the external Kasparov product with a spectral triple on the group algebra $\mathbb{C}{G}$ which is constructed with a (matrix valued) length function.

\begin{defn}
\begin{enumerate}
\item
	An \textit{odd spectral triple} $(\mathcal{A}, H, D)$ on a unital $C^{*}$-algebra $A$ consists of a dense $*$-subalgebra $\mathcal{A}\subseteq A$ represented by $\pi\colon A\rightarrow \mathbb{B}(H)$ on a Hilbert space $H$ and a self-adjoint operator $D$ (called a \emph{Dirac operator})
	densely defined on $\Dom{D}\subset H$  such that 
	$(1+D^2)^{-\frac{1}{2}}$ is compact, 
	$\pi(a)(\Dom{D})\subseteq \Dom{D}$ and the commutator
	$[D,\pi(a)]$ extends to a bounded operator on $H$ for every  
	$a\in \mathcal{A}$.
	
\item
An \emph{even} spectral triple on $A$ is given by the same data with the addition of a $\mathbb{Z}_{2}$-grading, namely a self-adjoint operator $\rchi \colon H\rightarrow H$ called a \emph{grading operator} such that
$\chi^{2}=1$,
$\pi(a)\rchi = \rchi \pi(a)$ for all $a\in A$, 
$\rchi(\Dom{D})\subseteq \Dom{D}$ and $D\rchi = -\rchi D$. In this case it is possible to split $H= H^+\oplus H^-$  and
\begin{displaymath}
	\rchi=\left(\begin{matrix}
		1 & 0 \\
		0 & -1
	\end{matrix}\right),\qquad 
	\pi =  \left(\begin{matrix}
		\pi^+ & 0 \\
		0 & \pi^-
	\end{matrix}\right), \qquad D= \left(\begin{matrix}
		0 & D^- \\
		D^+ & 0
	\end{matrix}\right).
\end{displaymath}
\item
	A spectral triple $(\mathcal{A}, H, D)$ on a unital $C^{*}$-algebra $A$ is called  \emph{non-degenerate} when  the representation $\pi$ is faithful and $[D, \pi(a)]=0$ 	if and only if $a\in \mathbb{C}1_{A}$. 
\item
	Given a spectral triple $(\A,\H,D)$, we consider the function 
	\begin{displaymath}
		\zeta_D(t)\coloneqq\tr ((I+D^2)^{-t/2})
	\end{displaymath}for $t\in \mathbb{C}$. The (complex) numbers $t$ for which $\zeta_D(t)$ is finite  are called \emph{summability exponents} and the triple is called \emph{finitely summable} if there exists such a number. In this case, we set 
	\begin{displaymath}
		\textup{abs} (\zeta_D)\coloneqq \inf \{t>0:\zeta_D(t)<\infty\}
	\end{displaymath}and call it \emph{abscissa of convergence} of $\zeta_D$.
\end{enumerate}
\end{defn}
If the representation $\pi$ is not clear from the context, we will use the notation  $(A, H, D, \pi)$.  
We recall that $\textup{abs} (\zeta_D)$ is the unique number $d$, if any, for which the Dixmier logarithmic trace $\tr_\omega ((I+D^2)^{-d/2})$ is finite non zero (cf. e.g. \cite{GuIs9}  theorem 2.7).

\subsection{Length Functions and Spectral Triples on Groups}

Let $V$ be a finite dimensional Hilbert space
(usually taken even dimensional) 
and denote by $\mathbb{B}_h(V)$  the space of selfadjoint operators on $V$. 
\begin{defn}\label{generalisedlenght}
	A function $\bm{\ell}: G \to \mathbb{B}_h(V)$ is called
	\begin{itemize}
		\item \emph{translation bounded} if for every $y\in G$ the translation function 
		\begin{equation}\label{tbf}
			\bm{\ell}_y(x)=\bm{\ell}(x)-\bm{\ell}(y^{-1}x)
		\end{equation} is bounded (in the $x$ variable),
		\item \emph{proper} if 
		\begin{enumerate}
			\item 	${\bm{\ell}}(g)=0$ if and only if $g=0$. \label{propercondition}
			\item the union of all the spectra $\mathcal{S}=\bigcup_{x \in G}\operatorname{Sp} \bm{\ell}(x)$ is a discrete set in $\mathbb{R}$ 
			\item any $t \in \mathcal{S}$ corresponds only to a finite number of elements $x \in G$.
		\end{enumerate}
	\end{itemize}
\end{defn}
Here and in the following with $\operatorname{Sp}$ we denote the spectrum of an operator.
Notice that the previous definition contains the standard case of proper translation bounded scalar length functions $\bm{\ell} : G \to \mathbb{Z}$.

\begin{exa}
	Let us construct such functions on the group $G=\mathbb{Z}^{n}$. Let $\iota:\mathbb{Z}^n \hookrightarrow \mathbb{R}^n $ be a norm preserving group embedding. Then for any 
	unitary\footnote{Clifford algebras conventions: $\mathbb{C}\bm{\ell}_n$ is the complex Clifford algebra where $vw+wv= - \langle v,w \rangle$ for $v,w \in \mathbb{R}^n$. 
		A Clifford representation is unitary (a.k.a. \emph{self adjoint}) if $\varepsilon({v})$ is unitary for $\|v\|=1$. Then it follows that $\varepsilon(v)^*=-\varepsilon(v)$  } Clifford representation 
	$\varepsilon: \mathbb{C}{\ell}_n \to \operatorname{End}_{\mathbb{C}}(V)$
	we define $\bm{\ell}(z)=i \varepsilon( \iota(z))$. 
	In other words $\bm{\ell}(z) = \sum_{j=1}^n z_j  f_j$ for $f_j:= i\varepsilon(\iota (e_j))$.
	Since 
	$$  \bm{\ell}(z)^2 = \|z\|^2 \cdot \operatorname{Id}_S$$ we have a proper translation bounded function.
	We call these \emph{Clifford length functions}.
	
	Notice that the translation bounded property is trivially verified by linearity. In particular such length functions can be pulled back: let $s: G_1 \to G_2$ be a map of sets such that $s(g)=0 $ if and only if $g=0$ (then if it is a group morphism it has to be injective) and has finite fibers. Then if $\bm{\ell}$ is a Clifford length function also $\bm{\ell} \circ s$ is a translation bounded proper length function; however it is  not Clifford,
	 in general.
	\demo
\end{exa}

Given a proper translation bounded  function $\bm{\ell}:G \rightarrow \mathbb{B}_h(V)$ on a group $G$ we define an odd spectral triple
\begin{equation}\label{group}
	(\mathbb{C}G, \ell^2 (G) \otimes V , M_{\bm{\ell}}).
\end{equation} The representation is obtained by amplification of the left regular representation $\lambda_g \delta_x=\delta_{gx}$; thus 
$$\lambda(\delta_g) ( \delta_{x} \otimes v):= \delta_{gx} \otimes v$$
 while the Dirac operator is just the multiplication action of $\bm{\ell}$
initially defined on a dense subspace; concretely
\begin{displaymath}
	M_{\bm{\ell}}(\delta_{x} \otimes v) = \delta_{x}\otimes \bm{\ell}(x)v
\end{displaymath}
on $\mathbb{C}G \otimes V$ (algebraic tensor product).
The assumption \eqref{propercondition} in  definition
\ref{generalisedlenght} implies that $(\mathbb{C}G, \ell^2 (G) \otimes V , M_{\bm{\ell}})$ is non-degenerate.

The eigenvectors of the Dirac operator for the spectral triple $(\mathbb{C}G, \ell^2 (G) \otimes V , M_{\bm{\ell}})$ are the vectors $\delta_g\otimes v_j(g)\in \ell^2(G)\otimes V$, where the $v_j(g)$'s are the eigenvectors of $\bm{\ell}(g)$. The family of singular values (with multiplicity) for $M_{\bm{\ell}}$ is then given by 
$\{s_j(g), (g,j)\in G\times \{1,\dots,\dim V\}\}$.

Let us now discuss when the spectral triple $(\mathbb{C}G, \ell^{2}(G)\otimes V, M_{\bm{\ell}})$ is finitely summable. Let us set 
\begin{displaymath}
 B_n=\{g\in G: \min\operatorname{Sp}(|{\bm{\ell}}(g)|)\leq n\}.
\end{displaymath} Note that the translation function $\bm{\ell}$ being proper,  $\#B_n<\infty$. In particular $G$ is at most countable.
\begin{defn}\label{Pgrowth}
	We shall say that $G$ has polynomial growth w.r.t. the function $\bm{\ell}$ if $\#B_n$ grows at most polynomially for $n\to\infty$. In this case we call \emph{growth of $G$} the number
	$$
	d_G=\limsup_n\frac{\log(\# B_n)}{\log n},
	$$
	(cf. e.g \cite{Miln} for the case of the word length for finitely generated groups). 
\end{defn}
For any $g\in G$, we denote by $s_j(g)$, $j=1,\dots \dim V$, the singular values (with multiplicity) of $\bm{\ell}(g)$ and set 
$$
\Sigma_n \coloneqq \{(g,k)\in G\times \{1,\dots,\dim V\}: s_k(g)< n\}.
$$
On the one hand $\Sigma_n\subset B_n\times \{1,\dots,\dim V\}$, therefore $\#\Sigma_n\leq\#B_n\cdot \dim V$.
On the other hand, for any $g\in B_n$ there exists $k\in \{1,\dots,\dim V\}$ such that $(g,k)\in \Sigma_{n+1}$, therefore $\#B_n\leq\#\Sigma_{n+1}$. This implies that
$\displaystyle d_G=\limsup_n\frac{\log(\# \Sigma_n)}{\log n}$.

\begin{prop}\label{growth}
	The  spectral triple $(\mathbb{C}G, \ell^2 (G) \otimes V , M_{\bm{\ell}})$ is finitely summable {\it iff } $G$ has polynomial growth w.r.t. the proper translation bounded function $\bm{\ell}$. In this case, $\textup{abs} (\zeta_D)$ coincides with the growth of $G$.
\end{prop}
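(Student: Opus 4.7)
The plan is to translate the summability condition on $\zeta_D$ into a counting statement about the eigenvalues of $M_{\bm{\ell}}$, and then compare the resulting counting function with $\#B_n$ as already done in the text just before the proposition.

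First, I would use the explicit spectral description of $M_{\bm{\ell}}$ given in the excerpt: its eigenvectors are of the form $\delta_g\otimes v_j(g)$ with eigenvalues equal to the singular values $\{s_j(g):(g,j)\in G\times\{1,\dots,\dim V\}\}$. Hence
\begin{equation*}
\zeta_D(t)=\operatorname{tr}\bigl((I+M_{\bm{\ell}}^2)^{-t/2}\bigr)=\sum_{(g,j)}(1+s_j(g)^2)^{-t/2}.
\end{equation*}
Since each $s_j(g)\geq 0$ and only finitely many $(g,j)$ correspond to any given value (by properness), this is a well-defined, possibly infinite, sum of positive terms.

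Next, I would rewrite this series in terms of the counting function $N(n):=\#\Sigma_n$. Partition the indices according to the shells $\Sigma_{n+1}\setminus\Sigma_n$; letting $c_n:=N(n+1)-N(n)$, one has up to constants $(1+s^2)^{-t/2}\asymp (1+s)^{-t}$, so
\begin{equation*}
\zeta_D(t)\asymp \sum_{n\geq 0} c_n(1+n)^{-t}.
\end{equation*}
An Abel summation (or the equivalent integral test, valid because all terms are nonnegative) shows that this sum converges if and only if $\sum_n N(n)\bigl((1+n)^{-t}-(2+n)^{-t}\bigr)<\infty$, which in turn is equivalent to $\sum_n N(n)n^{-t-1}<\infty$. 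By the usual criterion for Dirichlet-type series, this last series converges for exactly those $t$ satisfying $t>\limsup_n \log N(n)/\log n$, and diverges for $t$ below this threshold.

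Finally, I would invoke the comparison established just before the statement: $\#\Sigma_n\leq (\dim V)\cdot\#B_n$ and $\#B_n\leq \#\Sigma_{n+1}$, which gives $\limsup_n\log N(n)/\log n=d_G$. Combining this with the previous step yields simultaneously (i) the triple is finitely summable iff $d_G<\infty$, i.e.\ iff $G$ has polynomial growth with respect to $\bm{\ell}$, and (ii) $\textup{abs}(\zeta_D)=d_G$. The only slightly delicate step is the Abel-summation equivalence between the two series, which is routine once one notes that $(1+n)^{-t}-(2+n)^{-t}$ is comparable to $n^{-t-1}$; the properness of $\bm{\ell}$ is what guarantees that all the counting functions $N(n)$ are finite so that no analytic issue arises in the manipulation.
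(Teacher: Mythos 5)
Your proof is correct, and it reaches the same two comparison inequalities ($\#\Sigma_n\leq \dim V\cdot\#B_n$ and $\#B_n\leq\#\Sigma_{n+1}$) that the paper uses, but the technical core is genuinely different. The paper does not manipulate the series $\sum_{(g,j)}(1+s_j(g)^2)^{-t/2}$ directly: it invokes a general result on the infinitesimal order of compact operators (Theorem \ref{thm:inf-order} in Appendix \ref{app1}), which identifies $\textup{abs}(\zeta_{M_{\bm{\ell}}})$ with $\limsup_n \log\bigl(\lambda_{1/n}((I+M_{\bm{\ell}}^2)^{-1/2})\bigr)/\log n$, and then only has to recognize $\lambda_{1/n}$ as the eigenvalue counting function and sandwich it between $\#\Sigma_{n-1}$ and $\#\Sigma_n$. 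You instead reprove the special case of that general theorem needed here, by summation by parts: with $N(n)=\#\Sigma_n$ and all terms nonnegative, Tonelli gives the exact identity $\sum_m c_m a_m=\sum_n N(n)(a_{n-1}-a_n)$ (note $N(0)=0$ since singular values are nonnegative), and the divergence half of your ``usual criterion'' does require the monotonicity of $N$ — e.g.\ bounding $\sum_{n=n_k}^{2n_k}N(n)n^{-t-1}\geq 2^{-t-1}N(n_k)n_k^{-t}$ along a subsequence realizing the $\limsup$ — rather than a bare term-by-term estimate; this is worth making explicit, but it is standard. What each approach buys: the paper's route is shorter on the spot and reuses machinery that is needed again in the summability part of Theorem \ref{spectraltripleconstruction}, while yours is self-contained and elementary, at the cost of re-deriving (in this special case) the content of Theorem \ref{thm:inf-order}.
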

\begin{proof}
	By definition, 
	$\zeta_{M_{\bm{\ell}}}(s)=\tr \big((I+M_{\bm{\ell}}^2)^{-s/2}\big)$ and
	$
	\textup{abs}(\zeta_{M_{\bm{\ell}}})=\inf\{s\geq0:\zeta_{M_{\bm{\ell}}}(s)<\infty\}.
	$
%	Recalling definition \ref{defn:inf-order}, we get 
%	$\textup{abs}(\zeta_{M_{\bm{\ell}}})=o((I+M_{\bm{\ell}}^2)^{-1/2})$, 
	Therefore
	the equality 
	\begin{displaymath}
	\textup{abs}(	\zeta_{M_{\bm{\ell}}}) =  
		\displaystyle\limsup_{n}\frac{\log(\lambda_{1/n}((I+M_{\bm{\ell}}^2)^{-1/2}))}{\log n}
	\end{displaymath} follows  by theorem \ref{thm:inf-order}.
	We then observe that 
	\begin{align*}
		\lambda_{1/n}((I+M_{\bm{\ell}}^2)^{-1/2})
		&=
		\#\{k\in\N: \mu_k((I+M_{\bm{\ell}}^2)^{-1/2})> 1/n\}\\
		&=\#\{(g,k)\in G\times \{1,\dots,\dim V\}: \sqrt{1+s_k(g)^{2}}< n\}
	\end{align*}
	and, since $|M_{\bm{\ell}}|\leq (I+M_{\bm{\ell}}^2)^{1/2}\leq I + |M_{\bm{\ell}}|$,
	$$
	\#\Sigma_{n-1}
	\leq \#\{(g,k)\in G\times \{1,\dots,\dim V\}: \sqrt{1+s_k(g)^{2}}\leq n\}
	\leq\#\Sigma_n.
	$$
	Finally,
	$$
	d_G=\limsup_n\frac{\log(\# \Sigma_n)}{\log n}
	=\limsup_{n}\frac{\log(\lambda_{1/n}((I+M_{\bm{\ell}}^2)^{-1/2}))}{\log n}
	=\textup{abs}(\zeta_{M_{\bm{\ell}}}).
	$$
\end{proof}

\subsection{Spectral Triples From Twisted  Actions}\label{sezequicont}

Let $G$ be a discrete group with a twisted action $(\rho,\sigma)$ on a $C^*$-algebra $A$. We assume that a spectral triple $(\mathcal{A},H,D)$ with corresponding representation 
$\pi:A \rightarrow \mathbb{B}(H)$
has been fixed on $A$ and that $\pi$ is faithful.
Here $\mathcal{A} \subset A$ is a smooth dense subalgebra. We give the following definitions generalising the untwisted case \cite{hawkins2013spectral}.

\begin{defn}
We say that the twisted action $(\rho, \sigma)$ is \emph{smooth} if it restricts to a twisted action on $\mathcal{A}$. In other words $\rho: G \rightarrow \operatorname{Aut}(\mathcal{A})$ and $\sigma: G \times G \rightarrow U(\mathcal{A})$. 	
\end{defn}

We consider the following setup for the construction of spectral triples.

\begin{setup}\label{setuptwisted}
Assume we have fixed:
	\begin{enumerate}
		\item a spectral triple $(\mathcal{A}, H,D)$ on the $C^*$-algebra $A$.
		\item A twisted smooth action $(\rho,\sigma)$ on $A$ such that  
		\begin{equation}\label{equicont}
			\sup_{x\in G}\| [D,\pi(\rho_x(a)\sigma_{x,y})]\|<\infty
		\end{equation}
		for every $a\in \mathcal{A}$ and $y\in G$. 
		\item A proper translation bounded length function ${\bm{\ell}}:G \rightarrow \mathbb{B}_h(V)$. 
	\end{enumerate}
\end{setup}

Condition \eqref{equicont} generalizes to the twisted case the one used in \cite{hawkins2013spectral} which, when the triple $(\mathcal{A}, H,D)$ satisfies the Lipschitz condition, corresponds to a metrically \mbox{equicontinuous action}. \\

If the spectral triple $(\mathcal{A} , H,D, \pi)$ is odd,
consider the amplification  $\pi\otimes 1_{\ell^{2}(G,V)}: A \rightarrow \mathbb{B}(H\otimes \ell^{2}(G,V))$ of the representation. Then we have the (left) induced representation $\Pi$ of $A\rtimes_r^{\rho,\sigma}G$ on $H\otimes  \ell^2(G, V)\cong \ell^2(G, V\otimes H)$. From formula \eqref{left} it follows  \begin{displaymath}
	\Pi(a_x\delta_x)(\xi\otimes \delta_y\otimes v  ) = \pi\big{(}\rho_{y^{-1}x^{-1}}(a_x)\sigma_{y^{-1}x^{-1},x}\big{)}\xi\otimes \delta_{xy}\otimes v  
\end{displaymath} for the basic elements  $a_x\delta_x$, 
for $v\in V$ and $\xi \in H$.
We are ready to construct the spectral triple following \cite{hawkins2013spectral}. Notice that if the twisted action is smooth it makes sense to consider the twisted convolution algebra $C_c(G,\mathcal{A})$ dense in the reduced twisted crossed product.

\begin{thm}\label{spectraltripleconstruction}  
Under the assumptions in setup \ref{setuptwisted}:
	\begin{enumerate}
		\item[(1a)] \textup{[Odd to even case]}. 	Assume that the spectral triple on $A$ is odd. Then we have an induced even spectral triple 
		$(C_c(G,\mathcal{A}), {\widetilde{H}}, \widetilde{D})$
		on $A\rtimes^{\rho,\sigma}_r {G}$ with $\widetilde{H}= (H\otimes \ell^2({G})  \otimes V   ) \oplus  (H\otimes\ell^2({G})  \otimes V)$, the representation is the direct sum $\Pi \oplus \Pi$ and the Dirac operator has the form
		\begin{equation}\label{Diractilde}
			\widetilde{D}= \left(\begin{matrix}
				0 &  D\otimes 1_{\ell^2({G})} \otimes 1_V  -i 1_{{H}}\otimes M_{\bm{\ell}} \\
				D\otimes 	1_{\ell^2({G})} \otimes 1_V  +i1_{{H}}\otimes  M_{\bm{\ell}}  & 0
			\end{matrix}\right).
		\end{equation}
		This spectral triple is non-degenerate provided $(\mathcal{A} , H,D, \pi)$ is non-degenerate. 
		
		\item[(1b)] \textup{[Even to odd case]}. 	If the spectral triple on $A$ is even with ${H}={H}^+ \oplus {H}^-$, representation $\pi= \pi^+ \oplus \pi^-$ and $D=\left(\begin{array}{cc}0 &  D^- \\D^+ & 0\end{array}\right)$
		then we can construct an odd spectral triple 
		$ (C_c(G,\mathcal{A})\, \widetilde{{H}}, \widetilde{D})$
		on $A\rtimes^{\rho,\sigma}_{r} G $ with Hilbert space $\widetilde{{H}}= ({H}^+\otimes  \ell^2({G}) \otimes V  ) \oplus ( {H}^-\otimes \ell^2({G}) \otimes V )$ and the representation $\Pi$ is induced by $\pi$. It is diagonal whose components are induced by $\pi^{\pm}$. The Dirac operator is:
		\begin{equation}
			\widetilde{D}= \left(\begin{matrix}
				1_{{H}^+}\otimes M_{\bm{\ell}}  & D^- \otimes 1_{\ell^2({G})}\otimes 1_V    \\ 
				D^+\otimes 1_{\ell^2({G})}\otimes 1_V    & -1_{{H}^-}\otimes M_{\bm{\ell}} 
			\end{matrix}\right).
		\end{equation}
		This spectral triple is non-degenerate if the triple on $A$ is.
		
		\item[(2)] \textup{[Finite summability]}.  	If the triple $(\A,\H, D)$ is finitely summable and $G$ has polynomial growth w.r.t. the translation bounded proper function $\bm\ell$, then also the spectral triple 
		$A \rtimes^{\rho,\sigma}_r G$ is finitely summable and
		\begin{equation}\label{stima2}
			\textup{abs}(\zeta_{\widetilde D})\leq \textup{abs}(\zeta_{D}) + d_G.
		\end{equation} If $\lim_n\frac{\log(\# B_n)}{\log n}$ exists, then the equality in \eqref{stima2} holds true.
\end{enumerate}
\end{thm}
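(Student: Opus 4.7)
My plan is to build $\widetilde{D}$ as an unbounded model for the Kasparov external product of $(\mathcal{A},H,D)$ with the group triple $(\mathbb{C}G,\ell^{2}(G)\otimes V,M_{\bm\ell})$ of Proposition \ref{growth}; the key algebraic identity underlying every step is $\widetilde{D}^{2}=D^{2}\otimes 1_{\ell^{2}(G)}\otimes 1_{V}+1_{H}\otimes M_{\bm\ell}^{2}$ (acting diagonally on the two components of $\widetilde{H}$ in both cases). The natural core is the algebraic tensor product of $\Dom D$, $\mathbb{C}G$ and $V$ (doubled for the graded case 1a); essential self-adjointness of $\widetilde{D}$ on this core follows from strong commutativity of its two squared summands (they act on disjoint tensor factors), and compactness of $(1+\widetilde{D}^{2})^{-1/2}$ follows from the joint spectral decomposition together with the fact that $(1+\lambda_{i}+\mu_{j})^{-1/2}\to 0$ jointly, where $\lambda_{i},\mu_{j}$ run over the eigenvalues of $D^{2}$ and $M_{\bm\ell}^{2}$. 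The grading in case (1a) is the standard $\rchi=1\oplus(-1)$, which commutes with the diagonal representation $\Pi\oplus\Pi$ and anticommutes with the antidiagonal $\widetilde{D}$; in case (1b) the diagonal-block form of $\widetilde{D}$ and the diagonal form of $\Pi$ immediately yield an odd triple.

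Next, I would verify bounded commutators by direct computation on a simple tensor. For a generator $a\delta_{x}\in C_{c}(G,\mathcal{A})$, formula \eqref{left} gives
\begin{align*}
[D\otimes 1,\Pi(a\delta_{x})](\xi\otimes\delta_{y}\otimes v) &= [D,\pi(\rho_{y^{-1}x^{-1}}(a)\sigma_{y^{-1}x^{-1},x})]\xi\otimes\delta_{xy}\otimes v,\\
[1\otimes M_{\bm\ell},\Pi(a\delta_{x})](\xi\otimes\delta_{y}\otimes v) &= \pi(\rho_{y^{-1}x^{-1}}(a)\sigma_{y^{-1}x^{-1},x})\xi\otimes\delta_{xy}\otimes(\bm\ell(xy)-\bm\ell(y))v.
\end{align*}
The first is uniformly bounded in $y$ by hypothesis \eqref{equicont}, letting $z=y^{-1}x^{-1}$ run over $G$; the second is uniformly bounded in $y$ because $\bm\ell(xy)-\bm\ell(y)=\bm\ell_{x^{-1}}(xy)$ is the translation function \eqref{tbf}, bounded by translation boundedness of $\bm\ell$. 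Assembling these into the off-diagonal (case 1a) or diagonal (case 1b) block structure of $[\widetilde{D},\Pi\oplus\Pi]$, respectively $[\widetilde{D},\Pi]$, and extending by finite linear combinations gives boundedness on all of $C_{c}(G,\mathcal{A})$. For non-degeneracy, splitting $[\widetilde{D},\cdot]=0$ into its $D$-part and $M_{\bm\ell}$-part via the $\pm i$ combinations in \eqref{Diractilde}, the first together with non-degeneracy of $(\mathcal{A},H,D)$ forces the coefficients $a_{x}$ to be scalar, while the second together with properness of $\bm\ell$ forces the support to be $\{e\}$.

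For finite summability, the same diagonal form of $\widetilde{D}^{2}$ yields
$$\zeta_{\widetilde{D}}(s)=2\sum_{i,j}(1+\lambda_{i}+\mu_{j})^{-s/2}.$$
The AM--GM inequality gives $(1+\lambda)^{\alpha}(1+\mu)^{1-\alpha}\leq\alpha(1+\lambda)+(1-\alpha)(1+\mu)\leq 1+\lambda+\mu$ for $\alpha\in(0,1)$, whence $\zeta_{\widetilde{D}}(s)\leq 2\,\zeta_{D}(\alpha s)\,\zeta_{M_{\bm\ell}}((1-\alpha)s)$, finite as soon as $\alpha s>\textup{abs}(\zeta_{D})$ and $(1-\alpha)s>d_{G}$ (the latter by Proposition \ref{growth}). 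Choosing $\alpha=\textup{abs}(\zeta_{D})/(\textup{abs}(\zeta_{D})+d_{G})$ so that both thresholds are crossed simultaneously, convergence holds for any $s>\textup{abs}(\zeta_{D})+d_{G}$, establishing \eqref{stima2}. For the equality under the existence of $\lim_{n}\log(\#B_{n})/\log n$, I would apply a Hardy--Littlewood--Karamata Tauberian argument: the spectral counting function of $M_{\bm\ell}^{2}$ is then regularly varying of index $d_{G}/2$, and the convolution structure of the joint counting function of $\widetilde{D}^{2}$ transfers this regularity so that its abscissa is exactly $\textup{abs}(\zeta_{D})+d_{G}$.

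The principal obstacle I anticipate is the sharp equality in \eqref{stima2}: the upper bound is a clean one-line consequence of a spectral AM--GM estimate, but matching it below requires a genuine Tauberian argument to propagate the regular-variation property of $\#B_{n}$ to the eigenvalue distribution of $\widetilde{D}^{2}$. Secondary technical points are the rigorous justification of essential self-adjointness of $\widetilde{D}$ on the algebraic core in the presence of the matrix-valued $M_{\bm\ell}$, and the careful use of properness of $\bm\ell$ (rather than strict positivity of a norm) in the non-degeneracy step.
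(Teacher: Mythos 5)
Your treatment of (1a)/(1b) follows the paper's route: the same block decomposition of $\widetilde D$ into a $D$-part and an $M_{\bm{\ell}}$-part, the same commutator computations controlled by \eqref{equicont} and by translation boundedness, and the Kasparov-product prescription for self-adjointness and compact resolvent. You argue compactness of the resolvent and non-degeneracy directly from the joint spectral decomposition and from splitting $[\widetilde D,\cdot]=0$ into its two pieces, where the paper simply cites Baaj--Julg; both of your direct arguments are correct as sketched. For the upper bound in \eqref{stima2} you take a genuinely different route: a weighted AM--GM factorization $\zeta_{\widetilde D}(s)\le 2\,\zeta_D(\alpha s)\,\zeta_{M_{\bm{\ell}}}((1-\alpha)s)$, versus the paper's two-sided comparison of eigenvalue counting functions, $N_{t/\sqrt2}(\sqrt{I+M_{\bm{\ell}}^2})\,N_{t/\sqrt2}(\sqrt{I+D^2})\le N_t(\sqrt{I\otimes(I+M_{\bm{\ell}}^2)+D^2\otimes I})\le N_t(\sqrt{I+M_{\bm{\ell}}^2})\,N_t(\sqrt{I+D^2})$ combined with Theorem \ref{thm:inf-order}. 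Your Zeta-function estimate is clean and valid for the inequality (modulo the degenerate choice of $\alpha$ when $\textup{abs}(\zeta_D)$ or $d_G$ vanishes, which is easily patched), but it only yields an upper bound by construction.

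The genuine gap is in the equality statement. Your proposed Hardy--Littlewood--Karamata argument rests on the claim that existence of $\lim_n\log(\#B_n)/\log n$ makes the spectral counting function of $M_{\bm{\ell}}^2$ regularly varying of index $d_G/2$. That implication is false: regular variation requires $\#B_{\lambda n}/\#B_n\to\lambda^{d_G}$, which is strictly stronger than $\log(\#B_n)/\log n\to d_G$ (e.g.\ a counting function behaving like $n^{d_G}(2+\sin(\log\log n))$ satisfies the latter but not the former). So the Tauberian machinery cannot be launched from the stated hypothesis, and the "principal obstacle" you identify is real for your route. It is, however, not an obstacle at all for the paper's route: the two-sided counting-function sandwich above shows that $\textup{abs}(\zeta_{\widetilde D})$ equals $\limsup_t\bigl(\log N_t(\sqrt{I+M_{\bm{\ell}}^2})/\log t+\log N_t(\sqrt{I+D^2})/\log t\bigr)$ exactly (the shift $t\mapsto t/\sqrt2$ being harmless on a logarithmic scale), and then the elementary fact that $\limsup(a_t+b_t)=\lim a_t+\limsup b_t$ whenever one of the two limits exists gives the equality with no Tauberian input. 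You should replace the regular-variation argument with this lower bound on the counting function of $\widetilde D^2$.
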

We notice that if the group $G$ is finitely generated and $\bm{\ell}(g)$ is given by the word length, then $d_G$ does not depend on the choice of the generators and the limit $\lim_n\frac{\log(\# B_n)}{\log n}$ exists.

\begin{proof}
	We write the proof for the \textquotedblleft odd to even\textquotedblright \, case, the other one being similar. 
	As in \cite{hawkins2013spectral}, most of the properties will follow because we are following the prescription for the exterior Kasparov product.
	In particular the Dirac operator is a sum $\widetilde{D}=D_1 + iD_2$ with the two blocks
	\begin{equation}\nonumber
		D_1:=\left(\begin{matrix}
			0 & D\otimes 1_{\ell^2({G})} \otimes 1_V  \\
			D\otimes 1_{\ell^2({G})} \otimes 1_V  & 0
		\end{matrix}\right), \quad D_2:=
		\left(\begin{matrix}0 &  -  1_{{H}}\otimes M_{\bm{\ell}} \\
		1_{{H}}\otimes M_{\bm{\ell}}  & 0
		\end{matrix}\right) .
	\end{equation}
	The smooth algebra $C_c(G,\mathcal{A})$ preserves the domain of $\widetilde{D}$ and it allows us to estimate the commutators
	$[D_1,\Pi\oplus \Pi(a)]$ and $[D_2,\Pi\oplus \Pi(a)],$
	for $a\in C_c(G,\mathcal{A})$ separately. Of course we just have to compute these commutators for $a=a_x\delta_x$. It follows by a straightforward computation that
\begin{displaymath}
		[D_1, \Pi\oplus \Pi (a_x\delta_x)]\left(\begin{matrix} \xi\otimes  \delta_y \otimes  v   \\
			  \eta \otimes\delta_{z}  \otimes w
		 \end{matrix}\right)=
	\left(\begin{matrix}
		  \big{[}D,\pi(\rho_{z^{-1}x^{-1}}(a_x)\sigma_{z^{-1}x^{-1},x})\big{]}\eta \otimes\delta_{xz} \otimes w\\ 
		\big{[}D,\pi(\rho_{y^{-1}x^{-1}}(a_x)\sigma_{y^{-1}x^{-1},x}\big{]}\xi \otimes\delta_{xy} \otimes v
	 \end{matrix}\right)
\end{displaymath} which is bounded by condition \eqref{equicont}. 	Concerning the second commutator we have
	\begin{displaymath}
		[D_2,\Pi \oplus \Pi(a_x\delta_x)]=
		\left(\begin{array}{cc}0 & -\big{[}1_{H}\otimes M_{\bm{\ell}},   \Pi(a_x\delta_x)\otimes 1_V \big{]} \\
			\big{[}1_H\otimes M_{\bm{\ell}}, \Pi(a_x\delta_x)\otimes 1_V \big{]} & 0\end{array}\right).
	\end{displaymath}
		Now 
		\begin{displaymath}
			\big{[}1_H\otimes M_{\bm{\ell}}, \Pi(a_x\delta_x) \otimes 1_V\big{]} ( \xi \otimes  \delta_y\otimes v)=  \Pi(a_x\delta_x)(\xi \otimes \delta_y)\otimes (\bm{\ell}(xy)-\bm{\ell}(y))v
		\end{displaymath} which is bounded by the traslation boundedness of the length function.
	Compactness of the resolvent and non degenerateness follow from the corresponding statements for the Kasparov exterior product \cite{baaj1983theorie}.

We finally prove the summability  properties. We shall use the notations and results of Appendix \ref{app1}.
	Let us observe that, if $T$ is a positive invertible operator with compact inverse and if we denote by $N_t(T)$  the number (with multiplicity) of the eigenvalues of $T$ lower than $t$, then we have
	$N_t(T)=\#\{n\geq0: \mu_n(T^{-1})> t\}$ and $o(T^{-1})= \limsup_{t\to\infty}\frac{\log(N_{t}(T))}{\log t}$.
	Therefore 
	$$
	\textup{abs}(\zeta_{\widetilde{D}})=\limsup_{t\to\infty}\frac{\log(N_{t}((I+\widetilde{D}^2)^{1/2}))}{\log t}.
	$$
	Since $\widetilde{D}^2=\big(I\otimes M_{\bm{\ell}}^{2} + D^{2}\otimes I\big)
	\begin{pmatrix}1&0\\0&1\end{pmatrix}$
	and the eigenvalues of $I\otimes M_{\bm{\ell}}^{2} + D^{2}\otimes I$ are $\mu^2+\nu^2$ where $\mu$, resp. $\nu$ is a singular value of $M_{\bm{\ell}}$, resp. $D$, we have
	$$
	N_t((I+\widetilde{D}^2)^{1/2}))=2N_t\big(\sqrt{I\otimes (I+M_{\bm{\ell}}^{2}) + D^{2}\otimes I}\big)
	$$ 
	and 
	\begin{displaymath}
		\begin{split}
			N_{t/\sqrt2}(\sqrt{I+M_{\bm{\ell}}^2}) N_{t/\sqrt2}(\sqrt{I+D^2}) &		\leq
			N_t\big(\sqrt{I\otimes (I+M_{\bm{\ell}}^{2}) + D^{2}\otimes I}\big) \\
			& \leq 		N_{t}(\sqrt{I+M_{\bm{\ell}}^2}) N_{t}(\sqrt{I+D^2}),
		\end{split}
	\end{displaymath}
	hence
	\begin{align*}
		\textup{abs}(\zeta_{\widetilde{D}})&=\limsup_{t\to\infty}\frac{\log(N_{t}((I+\widetilde{D}^2)^{1/2}))}{\log t}
		\\
		&=\limsup_{t\to\infty} \Bigg{(}
		\frac{\log(N_{t}(\sqrt{I+M_{\bm{\ell}}^2}))}{\log t} +
		\frac{\log(N_{t}(\sqrt{I+D^2}))}{\log t} \Bigg{)}
		\leq d_G+\textup{abs}(\zeta_{D})
	\end{align*}
	where the equality holds if any of the limits exists.
\end{proof}
\subsection{The Equivariant Construction}\label{equivariant}In the construction of Subsection \ref{sezequicont}, the representation of the twisted crossed product is obtained by starting from just a representation of the algebra on $H$ (via the induced representation). If we further have a twisted covariant representation on the same Hilbert space, then we can construct a $K$-homologically equivalent spectral triple. This follows remark $2.9$ in \cite{hawkins2013spectral} and is related to the \emph{equivariant} Kasparov product. 

Let $(\pi, U)$ be a twisted covariant representation of $(A,G,\rho,\sigma)$ on a Hilbert space $H$ and suppose that $\pi$ is faithful. Let $V$ be a finite dimensional vector space; this space will play a role only after proposition \ref{45}. Combining with the left regular representation of the group, we get a new covariant representation $(\hat{\pi},\hat{L})$ of $(A,G,\rho,\sigma)$ on $H\otimes \ell^2(G)\otimes V$ as follows:
	\begin{equation}\label{twisted2}
		\begin{dcases}
			\hat{\pi}(a)(\xi\otimes \delta_{g}\otimes v) = \pi(a)\xi\otimes \delta_{g}\otimes v \\
			\hat{L}_{h}(\xi\otimes \delta_{g}\otimes v) = U_{h}\xi\otimes \delta_{hg}\otimes v
		\end{dcases}
	\end{equation}
	for $a\in A$, $\xi\in H$, $v\in V$ and $g,h\in G$. 	It is immediate to check that $(\hat{\pi},\hat{L})$ is twisted covariant. Consider now the unitary operator $W\colon H\otimes \ell^2(G)\otimes V\rightarrow H\otimes \ell^2(G)\otimes V$ defined by 
	\begin{equation}\label{27}
		W(\xi\otimes \delta_{g}\otimes v) = \pi\left(\sigma_{g,g^{-1}}^{*}\right)U_{g}\xi\otimes \delta_{g}\otimes v.
	\end{equation}
Recall the twisted covariant representation $(\widetilde{\pi},\widetilde{L})$ (equations \eqref{leftpair}) whose integrated form is the left regular representation of $A\rtimes_r^{\rho,\sigma}G$. We consider in the following the covariant couple $(\widetilde{\pi}\otimes \operatorname{Id}, \widetilde{L}\otimes \operatorname{Id})$ on $H\otimes \ell^2(G)\otimes V$. With a slight abuse of notation we continue to call it $(\widetilde{\pi},\widetilde{L})$.
\begin{prop}\label{45}The operator $W$ satisfies
		$W\widetilde{\pi}(a)W^{*} = \hat{\pi}(a)$ and 	$W\widetilde{L}_{h}W^{*} = \hat{L}_{h}$. In other words $(\hat{\pi},\hat{L})$ is unitarily equivalent to $(\widetilde{\pi},\widetilde{L})$. Since $\pi$ is injective, the integrated form $\hat{\pi} \times \hat{L}$ descends to a representation of the reduced twisted crossed product.
	\end{prop}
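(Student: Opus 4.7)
The plan is to verify both intertwining identities by direct computation on simple tensors $\xi\otimes\delta_g\otimes v$, and then to transfer the descent to the reduced crossed product along the unitary equivalence. First I would note that the covariance relation $U_xU_y=\pi(\sigma_{x,y})U_{xy}$, specialized at $y=x^{-1}$, yields $U_gU_{g^{-1}}=\pi(\sigma_{g,g^{-1}})$; combined with the unitarity of each $U_g$ this immediately shows that the fiberwise operator $\pi(\sigma_{g,g^{-1}}^*)U_g$ is unitary with adjoint $U_g^*\pi(\sigma_{g,g^{-1}})$, so that $W$ is unitary and $W^*$ acts diagonally on $\ell^2(G)$ by these adjoints.

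For the identity $W\widetilde\pi(a)W^*=\hat\pi(a)$, applying both sides to $\xi\otimes\delta_g\otimes v$ reduces matters to
\[
\pi(\sigma_{g,g^{-1}}^*)\,U_g\,\pi(\rho_{g^{-1}}(a))\,U_g^*\,\pi(\sigma_{g,g^{-1}}) = \pi(a).
\]
Rewriting $\pi(\rho_{g^{-1}}(a))=U_{g^{-1}}\pi(a)U_{g^{-1}}^*$ via $\pi\circ\rho_x=\operatorname{Ad}(U_x)\circ\pi$, the middle collapses by $U_gU_{g^{-1}}=\pi(\sigma_{g,g^{-1}})$ to $\pi(\sigma_{g,g^{-1}})\pi(a)\pi(\sigma_{g,g^{-1}})^*$, and the outer cocycle factors cancel. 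For $W\widetilde L_hW^*=\hat L_h$, the computation on $\xi\otimes\delta_g\otimes v$ produces $T(g,h)\xi\otimes\delta_{hg}\otimes v$ with
\[
T(g,h)=\pi(\sigma_{hg,g^{-1}h^{-1}}^*)\,U_{hg}\,\pi(\sigma_{g^{-1}h^{-1},h})\,U_g^*\,\pi(\sigma_{g,g^{-1}}),
\]
and the goal is $T(g,h)=U_h$. The key moves are: (i) the identity $U_{hg}U_g^*=\pi(\sigma_{h,g}^*)U_h$ obtained from $U_hU_g=\pi(\sigma_{h,g})U_{hg}$; (ii) commuting $U_{hg}$ past $\pi(\sigma_{g^{-1}h^{-1},h})$ and $U_h$ past $\pi(\sigma_{g,g^{-1}})$ via covariance, which introduces $\pi(\rho_{hg}(\sigma_{g^{-1}h^{-1},h}))$ and $\pi(\rho_h(\sigma_{g,g^{-1}}))$ respectively; (iii) expanding each of these two terms via the 2-cocycle identity of definition \ref{covrep}(1) (noting that $\sigma_{e,h}=\sigma_{h,e}=1$ kills two of the resulting six factors), whereupon the remaining $\sigma$-terms pair off and telescope to $1$.

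With both identities established, $(\widetilde\pi,\widetilde L)$ and $(\hat\pi,\hat L)$ are unitarily equivalent covariant pairs, and so are their integrated forms. Since $\pi$ is faithful, the pair $(\widetilde\pi,\widetilde L)$ of \eqref{leftpair} tensored with the identity on $V$ integrates to the left regular representation on $H\otimes\ell^2(G)\otimes V$, which by definition descends to a faithful representation of $A\rtimes_r^{\rho,\sigma}G$; conjugation by $W$ transports this to $\hat\pi\times\hat L$, which therefore also descends to the reduced crossed product. The main obstacle is the cocycle bookkeeping in step (iii): identifying the correct instances $(x,y,z)=(hg,g^{-1}h^{-1},h)$ and $(h,g,g^{-1})$ of the 2-cocycle identity whose simultaneous application produces the cancellation; everything else is a routine application of the covariance and unitarity relations.
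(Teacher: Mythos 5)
Your proof is correct, and its overall strategy --- direct verification of both intertwining relations on simple tensors, followed by transporting the descent to the reduced crossed product along the unitary equivalence --- is the same as the paper's. The one genuine difference is the device used to close the second identity. You reduce $W\widetilde L_h W^*$ to showing $T(g,h)=U_h$ by commuting the $U$'s past the $\pi(\sigma)$'s and then invoking the $2$-cocycle identity of Definition~\ref{covrep}(1) twice, at the instances $(x,y,z)=(hg,g^{-1}h^{-1},h)$ and $(h,g,g^{-1})$; this does work --- the product $\sigma_{hg,g^{-1}h^{-1}}^*\,\rho_{hg}(\sigma_{g^{-1}h^{-1},h})\,\sigma_{h,g}^*\,\rho_h(\sigma_{g,g^{-1}})$ collapses to $1$ after the $\sigma_{e,h}$ and $\sigma_{h,e}$ factors drop out --- but it demands careful ordering of noncommuting factors in $A$. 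The paper instead computes $W\widetilde L_h$ and $\hat L_h W$ separately and substitutes $\pi(\sigma_{x,y})=U_xU_yU_{xy}^*$ throughout, so that both expressions become words in the unitaries $\{U_g,U_g^*\}$ alone and reduce to the same operator by cancellation, bypassing the cocycle identity entirely. Both routes are valid; yours makes the role of the cocycle condition explicit, while the paper's substitution trick is shorter and less prone to bookkeeping errors. Your treatment of the unitarity of $W$ and of the descent to the reduced crossed product matches the paper's.
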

	
	\begin{proof}The first equality is a straightforward computation:
		\begin{displaymath}
			\begin{split}
				W\widetilde{\pi}(a)W^{*}(\xi\otimes \delta_{g}\otimes v) &= \pi(\sigma_{g,g^{-1}}^{*})\left[U_{g}\pi(\rho_{g^{-1}}(a))U_{g}^{*}\right]\pi(\sigma_{g,g^{-1}})\xi \otimes \delta_{g}\otimes v \\
				&= \pi(\rho_{gg^{-1}}(a))\xi\otimes \delta_{g}\otimes v = \pi(a)\xi\otimes \delta_{g}\otimes v\\
				&= \hat{\pi}(a)(\xi\otimes \delta_{g}\otimes v).
			\end{split}
		\end{displaymath}
		We prove the second one by showing that $W \widetilde{L}_h=\hat{L}_hW$. Thus
		$$W\widetilde{L}_h (\xi \otimes \delta_x \otimes v)=\pi(\sigma^*_{hx,x^{-1}h^{-1}})U_{hx}\pi(\sigma_{x^{-1}h^{-1},h})\xi \otimes \delta_{hx}\otimes v$$ and
		$$ \hat{L}_h W(\xi \otimes \delta_x\otimes v)=U_h\pi(\sigma^*_{x,x^{-1}})U_x \xi\otimes \delta_{hx}\otimes v.$$
			It is immediate to check that these two vectors are equal using the fact that $(\pi,U)$ is a twisted covariant representation. Indeed for every $x,y\in G$ we have $\pi(\sigma_{x,y})=U_xU_yU_{xy}^*$. In other words the two vectors are converted in quantities involving only the unitaries $\{U_g,U_{g}^*\}_{g\in G}$ and they coincide. 	The rest of the proof follows from the fact that $\widetilde{\pi}\times \widetilde{L}$ is the defining representation of the reduced crossed product.
\end{proof}
We pass now to describe spectral triples.
\begin{defn}
	Let $(A,G,\rho,\sigma)$ be a twisted dynamical system with $A$ a unital $C^{*}$-algebra. A spectral triple $(\mathcal{A},H,D,\pi)$ on $A$ is \emph{twisted} $G$-\emph{equivariant} or simply $G$-equivariant, if there exists  a unitary map $U\colon G\rightarrow \mathcal{L}(H)$ such that:
	\begin{enumerate}
		\item $(\pi, U)$ is a twisted covariant representation of $(A,G,\rho,\sigma)$ on $H$.
		\item The operators $U_{g}$  leave the domain of $D$ invariant for all $g\in G$.
		\item The commutator $[D,U_{g}]$  extends to a bounded operator on $H$ for every $g\in G$. 
	\end{enumerate}
\end{defn}

Given an odd twisted equivariant spectral triple $(\mathcal{A},H,D,\pi)$,
and a proper translation bounded length function ${\bm{\ell}}:G \rightarrow \mathbb{B}_h(V)$,
 we can define a spectral triple on $A\rtimes_{r}^{\rho, \sigma}G$ as follows: the dense subalgebra is $C_{c}(G,\mathcal{A})$, the Hilbert space is $\widetilde{H} = (H\otimes \ell^{2}(G)\otimes V)\oplus(H\otimes \ell^{2}(G)\otimes V)$ and the representation is two copies of the integrated form of \eqref{twisted2} after being extended trivially on the $V$ factor.
  The Dirac operator is defined as in \eqref{Diractilde}. 
To relate this construction to the triple defined in Subsection \ref{sezequicont}, we begin with the observation that the two representations supporting the triples are unitarily equivalent, the unitary being $\mathcal{W}:=W \oplus W$ with $W$ taken from proposition \ref{45}.
Now the same unitary conjugates $\widetilde{D}$ to an operator
which on simple tensors operates as
$$\mathcal{W}\widetilde{D}\mathcal{W}^{*}\left(\begin{array}{c}
\xi \otimes \delta_g \otimes v
 \\ \eta\otimes \delta_h \otimes w
 \end{array}\right)=\left(\begin{array}{c}
 \pi(\sigma_{h,h^{-1}}^*)U_hDU_h^*\pi(\sigma_{h,h^{-1}})\eta \otimes \delta_h \otimes w-i\eta \otimes \delta_h \otimes \bm{\ell}(h)w
 \\ 
 \pi(\sigma_{g,g^{-1}}^*)U_gDU_g^*\pi(\sigma_{g,g^{-1}})\xi \otimes \delta_g \otimes v+i\xi \otimes \delta_g \otimes \bm{\ell}(g)v
  \end{array}\right).
$$ 
In particular, if $[D,U_{g}]$ is uniformly bounded in $g\in G$, then the twisted action is automatically equicontinuous in the sense of \eqref{equicont}. Furthermore,  as $U_hU_{h^{-1}}=\pi(\sigma_{h,h^{-1}})$ by definition, the family
	\begin{equation}\nonumber
		\big{\{}[D,\pi(\sigma_{h,h^{-1}})]  \big{\}}_{h\in G}
	\end{equation}
	is uniformly bounded and  $\mathcal{W}\widetilde{D}\mathcal{W}^{*}$ is a bounded perturbation of $\widetilde{D}$. Therefore, the two operators lead to the same
 $K$-homology class.  Analogous computations can be performed for the \textquotedblleft even to odd case\textquotedblright. \\

\noindent \emph{Summing up}: starting from an odd (twisted) equivariant spectral triple $(\mathcal{A},H,D,\pi)$ with covariant representation $(\pi,U)$ on $H$, we define a triple
$(C_{c}(G,\mathcal{A}),\widetilde{H},\widetilde{D},\hat{\pi}\times \hat{L})$ on the twisted crossed product
 $A\rtimes_{r}^{\rho, \sigma}G$.
 Here $\widetilde{H} \coloneqq  (H\otimes \ell^{2}(G)\otimes V)\oplus(H\otimes \ell^{2}(G)\otimes V)$, the representation 
 $\hat{\pi}\times \hat{L}:C_c(G,\mathcal{A})\rightarrow \mathbb{B}(\widetilde{H})$ is obtained by doubling the representation
 \begin{equation}\label{repcovarinduced}
 	\hat{\pi}\times \hat{L}(a_h\delta_h)(\xi \otimes \delta_x \otimes v)=\pi(a_h)U_h\xi \otimes \delta_{hx}\otimes v
 \end{equation}
 and the Dirac operator is:
 	\begin{equation}\label{diracequivariant}
		\widetilde{D}= \left(\begin{matrix}
			0 &  D\otimes 1_{\ell^2({G})} \otimes 1_V  -i 1_{{H}}\otimes M_{\bm{\ell}} \\
		  D\otimes 	1_{\ell^2({G})} \otimes 1_V  +i1_{{H}}\otimes  M_{\bm{\ell}}  & 0
	  \end{matrix}\right).
	\end{equation}
If we instead, begin with an even equivariant spectral triple
$(\mathcal{A},H^+\oplus H^-,D)$ with 
representation $\pi= 
\begin{pmatrix}
\pi^+ & 0 \\
0 & \pi^-
\end{pmatrix}$
and operator
$D= 
\begin{pmatrix}
0 & D^- \\
D^+ & 0
\end{pmatrix},$
 on the algebra $\mathcal{A}$, we can, in a similar way construct an odd spectral triple with Hilbert space
 $\widetilde{H} := (H^+ \otimes \ell^{2}({G})\otimes V) \oplus (H^- \otimes \ell^{2}({G})\otimes V)$
 carrying a representation obtained specializing formula \eqref{repcovarinduced} separately to the components $\pi^{\pm}$ 
 and operator
 \begin{equation}
 \widetilde{D} =
\begin{pmatrix}
	I_{H^+} \otimes M_{\vecLength} &  D^- \otimes I_{\ell^2({G})} \otimes I_V \\
	D^+ \otimes I_{\ell^2({G})} \otimes I_V  & -I_{H^-} \otimes  M_{\vecLength}
\end{pmatrix}.
\end{equation}
 
At the end of this section we show that in the equivariant case, the constructed spectral triple can be understood as the restriction of an exterior product of spectral triples. 

Let us specify what we mean by restriction for spectral triples: fix a spectral triple $(\mathcal{A},H,D)$ on the unital $C^*$--algebra $A$; 
let $(\mathcal{B},B)$ be a pair consisting of a unital $C^*$-algebra $B$ with a fixed dense $*$-subalgebra $\mathcal{B}$. 
Here $\mathcal{B}$ is though as being the smooth subalgebra of $B$, for if we have a unital $\ast$-homomorphism $g:B\to A$ mapping $\mathcal{B}$ to $\mathcal{A}$ we say that $g$ is smooth.
In this situation, composing the representation  of $A$ with $g$ we obtain
a
spectral triple $(\mathcal{B},H,D)$. 
\begin{defn}\label{pullback}
We call the spectral triple $(\mathcal{B},H,D)$ the \emph{pullback} spectral triple of $(\mathcal{A},H,D)$ under $g$. If $g$ is a unital inclusion we say that the obtained spectral triple is the \emph{restriction} of $(\mathcal{A},H,D)$.
\end{defn}

We pass now to the construction of a morphism:
$$\phi_A^{r}:A\rtimes_r^{\rho,\sigma}G \longrightarrow A\rtimes^{\rho,\sigma} G \otimes C^*_r(G)$$ such that $\phi_A^r(a\delta_g)=a\delta_g \otimes \delta_g$ (minimal tensor product).
This map is the twisted version of the one used in \cite{Cuntz1983,anto18localised}. Integrating $(\pi,U)$ defines a non degenerate representation   $\pi \times U: A\rtimes^{\rho,\sigma} G\longrightarrow \mathbb{B}(H)$. Let $B\subset \mathbb{B}(H)$ be the (separable) $C^*$-algebra image of $\pi \times U$. 
 We get a non degenerate morphism 
  $$\psi_A:A\rtimes_r^{\rho,\sigma}G \longrightarrow B\otimes C^*_r(G)$$
(again with respect to the minimal tensor product).
  The analytical details will be included in the proof of the next proposition.
 The algebra ${B}$ carries a spectral triple
 $(\mathcal{B},H,D)$ 
  with the same $D$ as $A$ and smooth algebra $\mathcal{B}$ given by finite sums $\sum_{g\in G} a_gU_g$ with $a_g \in \mathcal{A}$; on the other hand, we consider on $C^*_r(G)$ the spectral triple $(C_c(G),\ell^2(G)\otimes V,M_{\bm{\ell},} \lambda)$ constructed with the length function and regular representation.
  \begin{prop}\label{productrestriction}
  The spectral triple 	$(C_{c}(G,\mathcal{A}),\widetilde{H},\widetilde{D},\hat{\pi}\rtimes \hat{L})$ 
   resulting from the equivariant construction
  is the restriction via $\psi_A$ of the  product of the triples $(\mathcal{B},H,D)$ and $(C_c(G),\ell^2(G)\otimes V,M_{\bm{\ell},} \lambda)$.
  \end{prop}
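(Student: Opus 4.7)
The plan is to split the verification into three pieces: (i) make the morphism $\psi_A$ explicit as a $*$-homomorphism from $A\rtimes_r^{\rho,\sigma}G$ into the spatial tensor product $B \otimes C^*_r(G)$, (ii) identify the external Kasparov product of the two component triples, and (iii) compute the pullback along $\psi_A$ and match it with the equivariant construction. The main (and essentially only) content is in (i), and specifically in showing that the naive integrated map descends to the reduced crossed product; once this is settled, (ii) and (iii) are bookkeeping.

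For (i), one considers the pair $(\widetilde{\pi}_A,\widetilde{U}_A)$ for $(A,G,\rho,\sigma)$ valued in $B \otimes C^*_r(G) \subset \mathbb{B}(H\otimes\ell^2(G))$, defined by $\widetilde{\pi}_A(a) = \pi(a)\otimes I$ and $\widetilde{U}_A(g) = U_g\otimes\lambda_g$, with $\lambda$ the left regular representation of $G$. The twisted covariance identities follow immediately from those of $(\pi,U)$ and the group law for $\lambda$, since $(U_x\otimes\lambda_x)(U_y\otimes\lambda_y) = (\pi(\sigma_{x,y})\otimes I)(U_{xy}\otimes\lambda_{xy})$. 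Crucially, this pair is nothing but the $V$-free version of $(\hat\pi,\hat L)$ in \eqref{twisted2}; by proposition \ref{45} the unitary $W$ intertwines it with the left regular pair $(\widetilde\pi,\widetilde L)$. Consequently its integrated form is a faithful representation of the reduced twisted crossed product, giving $\psi_A(a\delta_g) = \pi(a)U_g\otimes\lambda_g$ and the smoothness $\psi_A(C_c(G,\mathcal{A})) \subset \mathcal{B}\otimes_{\mathrm{alg}} C_c(G)$.

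For (ii), since both $(\mathcal{B},H,D)$ and $(C_c(G),\ell^2(G)\otimes V,M_{\bm{\ell}},\lambda)$ are odd spectral triples, their external Kasparov product (cf.\ \cite{baaj1983theorie}) is an even spectral triple on $B\otimes C^*_r(G)$ with Hilbert space $(H\otimes\ell^2(G)\otimes V)^{\oplus 2} = \widetilde H$, doubled tensor representation, and Dirac operator of precisely the block form \eqref{diracequivariant}. For (iii), pulling the product representation back along $\psi_A$ sends $a\delta_g$ to $\pi(a)U_g\otimes\lambda_g\otimes I_V$, acting on a simple tensor as
$$\bigl(\pi(a)U_g\otimes\lambda_g\otimes I_V\bigr)(\xi\otimes\delta_x\otimes v) = \pi(a)U_g\xi\otimes\delta_{gx}\otimes v,$$
which is precisely \eqref{repcovarinduced}; doubling yields $\hat\pi\times\hat L$, while the Dirac operator, depending only on the Hilbert space structure, remains $\widetilde D$.
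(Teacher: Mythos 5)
Your argument is correct, and its skeleton (construct $\psi_A$, recognize the product triple, match representations and Dirac operators) is the same as the paper's; the one place where you genuinely diverge is in how $\psi_A$ is obtained and shown to be defined on the \emph{reduced} crossed product. The paper first builds $\phi_A^{\operatorname{max}}\colon A\rtimes^{\rho,\sigma}G\to A\rtimes^{\rho,\sigma}G\otimes C^*_r(G)$ from the covariant pair $(a\mapsto a\otimes 1,\ g\mapsto\delta_g\otimes\delta_g)$ via the universal property (which requires a digression on covariant pairs valued in multiplier algebras), then fixes an auxiliary faithful representation $\mu$ of the \emph{full} crossed product and uses the unitary $\Theta$ --- an instance of the $W$ of proposition \ref{45} --- to show $(\mu\otimes\lambda)\circ\phi_A^{\operatorname{max}}$ is equivalent to the regular representation, so that $\phi_A^{\operatorname{max}}$ descends to $\phi_A^r$, and $\psi_A$ is then obtained by pushing forward along $\pi\times U$. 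You instead work directly on $H\otimes\ell^2(G)$ with the given faithful $\pi$, observe that $(\pi(\cdot)\otimes I,\ U_\cdot\otimes\lambda_\cdot)$ is exactly the $V$-free version of $(\hat\pi,\hat L)$ in \eqref{twisted2}, and invoke proposition \ref{45} verbatim. The key lemma is therefore the same, but your packaging is shorter: it avoids the full crossed product and the auxiliary $\mu$ altogether, and it has the additional virtue of making the injectivity of $\psi_A$ immediate (since $W$ conjugates your pair to the defining regular representation \eqref{leftpair}), which is what justifies calling the resulting triple a \emph{restriction} rather than merely a pullback. What the paper's longer route buys is the canonical intermediate map $\phi_A^r\colon A\rtimes_r^{\rho,\sigma}G\to A\rtimes^{\rho,\sigma}G\otimes C^*_r(G)$, which is independent of the chosen covariant representation and of independent interest. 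One small point you should make explicit: the image of your integrated form lands in the \emph{spatial} tensor product $B\otimes C^*_r(G)$ because both factors are concretely represented on $H$ and $\ell^2(G)$ respectively, so the operator norm on $H\otimes\ell^2(G)$ computes the minimal tensor norm on $B\otimes_{\mathrm{alg}}C^*_r(G)$; with that remark added, steps (ii) and (iii) are, as you say, bookkeeping, and they match \eqref{repcovarinduced} and \eqref{diracequivariant} exactly.
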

  \begin{proof}
  	We begin by constructing a morphism $\phi_A^{\operatorname{max}}:A\rtimes^{\rho,\sigma} G\rightarrow A\rtimes^{\rho,\sigma}G \otimes C^*_r(G)$
  	(minimal tensor product).
  	This one exists because it is the integrated form of a covariant couple with values in $A\rtimes^{\rho,\sigma}G\otimes C^*_r(G)$; precisely the couple $(j_A,j_G)$ with 
  	\begin{center}
  			\begin{minipage}{0.4\textwidth}
  			\begin{displaymath}
  				\begin{split}
  					j_A: A & \longrightarrow A\rtimes^{\rho,\sigma}G \otimes C^*_r(G), \\
  					a & \longmapsto  a\otimes \operatorname{1}
  				\end{split}
  			\end{displaymath}
  		\end{minipage}
  		\hspace{0.4cm}
  		\begin{minipage}{0.4\textwidth}
  			\begin{displaymath}
  				\begin{split}
  					j_G: G &\longrightarrow U(A\rtimes^{\rho,\sigma}G \otimes C^*_r(G)), \\
  					g&\longmapsto \delta_g \otimes \delta_g.
  				\end{split}
  			\end{displaymath}
  		\end{minipage}
  	\end{center}

  	Indeed we have presented the universal property of the twisted crossed product with respect to covariant representations on Hilbert spaces but we may also use covariant couples with values in the multipliers of any $C^*$-algebra using Hilbert modules
  	\footnote{{In practice we can just represent faithfully $A\rtimes^{\rho,\sigma}G \otimes C^*_r(G)$ on a Hilbert space to make the construction rest on the Hilbert space based definition}}.
  	 To see that $\phi_A^{\operatorname{max}}$ descends to the reduced crossed product, we adapt the argument in \cite{anto18localised}.
  	Fix a faithful representation ${\mu}: A\rtimes^{\rho,\sigma} G \rightarrow \mathbb{B}(H_{\mu})$ and combine it with the left regular representation $\lambda$ to obtain a faithful representation $\mu\otimes \lambda$ of $A\rtimes^{\rho,\sigma}G \otimes C^*_r(G)$ on ${H_{\mu}}\otimes \ell^2(G)$.  Then $\mu$ corresponds to a covariant representation $(\pi^{\mu},U^{\mu})$ on $H_{\mu}$ and 
  	$( \mu \otimes \lambda) \circ \phi_A^{\operatorname{max}}$
  	corresponds to the covariant representation $(\pi^{\mu}\otimes \operatorname{Id}_{\ell^2(G)}\,,U^{\mu}\otimes \lambda_g )$. On the other hand, we consider  the reduced crossed product as being defined by the covariant representation induced by $\pi^{\mu}$ and
  	we call this couple $(\underline{\pi}^{\mu},L^{\mu})$. By \eqref{leftpair} we have  
  $$	\underline{\pi}^{\mu}(a)(\xi \otimes\delta_x)=\pi^{\mu}(\rho_{x^{-1}}(a))\xi\otimes \delta_x, \quad {L}_x^{\mu} (\xi\otimes \delta_y)=\pi^{\mu}(\sigma_{y^{-1}x^{-1},x})\xi \otimes \delta_{xy}.$$
  	
  	We are precisely in the context of proposition \ref{45}. The unitary
     \begin{displaymath}
  \begin{split}
  	 	\Theta:H_{\mu}  \otimes \ell^2(G) &\longrightarrow H_{\mu}\otimes \ell^2(G) \\
  	\xi \otimes \delta_g & \longmapsto \pi^{\mu}(\sigma_{g,g^{-1}}^*)U^{\mu}_g\xi \otimes \delta_g
  \end{split}
   \end{displaymath} satisfies $$\Theta\,\underline{\pi}^{\mu}\,\Theta^*=\pi^{\mu}\otimes \operatorname{Id}_{\ell^2(G)} \quad \textrm{and}\quad \,\,\,
   \Theta \,{L}_g^{\mu} \,\Theta^* =U^{\mu}_g \otimes \lambda_g,   $$ so that $\Theta \,(\underline{\pi}^{\mu} \times L^{\mu})\,\Theta^*= (\mu \otimes \lambda) \circ \phi_A^{\operatorname{max}}$. This means that the representation $(\mu \otimes \lambda) \circ \phi_A^{\operatorname{max}}$ is unitarily equivalent to the left regular one. It follows that $\phi_A^{\operatorname{max}}$ descends to an injection $\phi_A^r: A \rtimes^{\rho,\sigma}_rG \rightarrow A\rtimes^{\rho,\sigma}G \otimes C^*_r(G)$ defined on the reduced crossed product.
   Note that the operator $D$ on $H$ is a Dirac operator for a spectral triple
   $(\mathcal{B},H,D)$
    thanks to the equivariance property of the starting triple making all the commutators $[a_gU_g,D]$ bounded.
To conclude the proof,
    consider the exterior product
    (see Appendix \ref{regularity})
     of the spectral triples $(\mathcal{B},H,D)$ and $(C_c(G),\ell^2(G)\otimes V,M_{\mathbf{\ell}},\lambda)$. By construction this is the spectral triple on the algebra $\mathcal{B}\otimes C_c(G) \subset B\otimes C^*_rG$ represented on the Hilbert space
     $\widetilde{H}=(H\otimes \ell^2(G)\otimes V) \oplus (H\otimes \ell^2(G)\otimes V)$ via  
     $\widetilde{\Pi}:B\otimes C^*_r(G)\longrightarrow \mathbb{B}(\widetilde{H})$ such that 
     \begin{equation}\label{productrepresentation}
     	  \widetilde{\Pi}((\pi \times U)(a_h\delta_h )\otimes \delta_h)=(\pi(a_h)U_h \otimes \lambda_h \otimes 1_V) \oplus (\pi(a_h)U_h \otimes \lambda_h \otimes 1_V)
     \end{equation}
     on $\mathcal{B}\otimes C_c(G)$, 
         and Dirac operator which is exactly the Dirac operator \eqref{diracequivariant} in the equivariant construction.
Formula \eqref{productrepresentation} implies that the diagram
 $$\xymatrix{
 A\rtimes_r^{\rho,\sigma}G\ar[r]^{\psi_A}\ar[dr]_{\widehat{\pi}\times \widehat{L}}&B\otimes C^*_rG \ar[d]^{\widetilde{\Pi}}\\
{}&\mathbb{B}(\widetilde{H}) }$$
 commutes so that 
 the representation in the exterior product restricts under $\psi_A$ to the representation in the equivariant construction.
 Having already observed that the formulas for  the Dirac operator in the equivariant construction and the one in the exterior product coincide, the result follows.\end{proof}

\section{Regularity of the Equivariant Construction}
	
In the equivariant case, by proposition \ref{productrestriction} the spectral triple is a restriction of an exterior product.
This fact can be useful to prove additional properties such as the regularity. 
Basic definitions and informations about regularity of spectral triples are contained in the Appendix \ref{regularity}. Here, before proving the regularity of the equivariant triple, we discuss in which sense the regularity property is well behaved with respect to pullback (then also with respect to restrictions), together with some facts about the regularity of spectral triples associated to group algebras. We begin with the following obvious observation.
\begin{prop}\label{regularpullback}
The pullback of a regular spectral triple, in the sense of definition \ref{pullback} is regular.
\end{prop}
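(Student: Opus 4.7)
The plan is to observe that regularity is intrinsically a property of the operators $\pi(a)$ and $[D,\pi(a)]$ for $a$ ranging over the chosen smooth subalgebra, and that pulling back by a smooth morphism merely restricts this family to a subfamily of the one already controlled. Concretely, recall (from Appendix \ref{regularity}) that regularity of $(\mathcal{A},H,D)$ means that for every $a\in\mathcal{A}$ both $\pi(a)$ and $[D,\pi(a)]$ lie in $\bigcap_{n\geq 0}\mathrm{Dom}(\delta^n)$, where $\delta(T)=[|D|,T]$ (or equivalently the analogous condition for $\langle D\rangle=(1+D^2)^{1/2}$).

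Given a smooth unital $\ast$-homomorphism $g\colon B\to A$ with $g(\mathcal{B})\subseteq \mathcal{A}$, the pullback spectral triple is $(\mathcal{B},H,D)$ with representation $\pi\circ g$. First I would fix $b\in\mathcal{B}$ and simply note $\pi(g(b))=(\pi\circ g)(b)$ together with
\begin{equation*}
[D,(\pi\circ g)(b)]=[D,\pi(g(b))],
\end{equation*}
so both operators belong to the family $\{\pi(a),[D,\pi(a)]:a\in\mathcal{A}\}$ already assumed smooth for $\delta$. Hence they lie in $\bigcap_{n}\mathrm{Dom}(\delta^n)$, which is exactly the regularity condition for the pullback triple.

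There is no real obstacle here: the only thing to check, beyond the trivial identity above, is that $(\mathcal{B},H,D)$ is indeed a spectral triple (so that asking about its regularity is meaningful). This is immediate from the setup preceding Definition \ref{pullback}: $\mathcal{B}$ is dense in $B$, $\pi\circ g$ represents $B$ on $H$, $(1+D^2)^{-1/2}$ is compact, and invariance of $\mathrm{Dom}(D)$ under $(\pi\circ g)(\mathcal{B})$ and boundedness of the commutators follow from the inclusion $g(\mathcal{B})\subseteq\mathcal{A}$ and the corresponding properties of $(\mathcal{A},H,D)$. The statement for restrictions is the special case in which $g$ is the inclusion $B\hookrightarrow A$.
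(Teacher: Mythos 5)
Your argument is correct and is essentially the paper's own proof: both reduce regularity of the pullback to the observation that $(\pi\circ g)(b)=\pi(g(b))$ with $g(\mathcal{B})\subseteq\mathcal{A}$, so the relevant operators and their commutators with $D$ form a subfamily of those already known to lie in $\Dom^{\infty}(\delta)$. The extra remark that $(\mathcal{B},H,D)$ is a spectral triple is a harmless addition the paper leaves implicit.
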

\begin{proof}
In the context of definition \ref{pullback}, let $(\mathcal{B},H,D)$ be the pullback of a regular spectral triple $(\mathcal{A},H,D)$. Then $(\mathcal{B},H,D)$ is regular if and only if both $(\pi\circ g)(\mathcal{B})$ and $[|D|,(\pi \circ g)(\mathcal{B})]$ are contained in the domain 
$\operatorname{Dom}^{\infty}(\delta)$
 (see \ref{regularity}). But this is clear because $g(\mathcal{B})\subset \mathcal{A}.$ 
\end{proof}
Let us pass now to discuss the regularity property of the spectral triples on group algebras constructed with matrix valued length functions.

\begin{exa}
Consider the spectral triple $(\mathbb{C}G, \ell^2 (G) \otimes V , M_{\bm{\ell}})$. In this case the operator $\abs{M_{\bm{\ell}}}$ is the (self-adjoint extension of the)   multiplication operator by  $h\mapsto\abs{\bm{\ell}(h)}$. Given $\delta_{h}\otimes v\in \ell^{2}(G)\otimes V$ we have
		\begin{equation}\label{first}
			\begin{split}
				[\abs{M_{\bm{\ell}}}, \lambda_{g}] (\delta_{h}\otimes v) &= \abs{M_{\bm{\ell}}}(\delta_{gh}\otimes v) - \lambda_{g}\delta_{h} \otimes \abs{\bm{\ell}(h)}v \\
				&=\delta_{gh}\otimes \left(\abs{\bm{\ell}(gh)} - \abs{\bm{\ell}(h)}\right)v.
			\end{split}
		\end{equation}
		It is known that for any couple of square matrices $S,T$ we have 
		\begin{equation}\label{stima}
			\norm{\abs{S} - \abs{T}}\leq C \norm{S-T}
		\end{equation}
		for a suitable constant $C$ which does not depend on $S,T$ but depends on the dimension of the vector space $V$ (see \cite[Corollary 14]{kosaki1992unitarily}) and this is $C=1$ for $\dim V=1$ and proportional to $\log(\dim V)$ for $\dim V\geq 2$. This proves that the commutator $[\abs{M_{\bm{\ell}}}, \lambda_{g}]$ is bounded for any $g\in G$.
		This property, weaker than the regularity, is called {\em{Lipschitz regularity}}. The triple $(\mathbb{C}G, \ell^2 (G) \otimes V , M_{\bm{\ell}})$ is Lipschitz regular for every translation bounded length function $\bm{\ell}\colon G\rightarrow \mathbb{B}_{h}(V)$. 
	
	In general, the left regular representation $\lambda_{g}$ does not lie in the domain of the derivation $\delta^{k}$ for $k\geq 2$ as the elements $\abs{\bm{\ell}(h)}$ do not commute with each other. If this is the case,  then
	\begin{displaymath}
		\delta^{k}(\lambda_{g})(\delta_{h}\otimes v) = \delta_{gh}\otimes \left(\abs{\bm{\ell}(gh)} - \abs{\bm{\ell}(h)}\right)^{k}v
	\end{displaymath}
	and this is bounded by \eqref{stima}. Understanding when the commutator $[M_{\bm{\ell}}, \lambda_{g} ]$ lies in the domain of $\delta^{k}$ for any $k\geq 1$ leads to a similar problem: for  $\delta_{h}\otimes v\in \ell^{2}(G)\otimes V$ we have
	\begin{displaymath}
		\begin{split}
			\delta([M_{\bm{\ell}}, \lambda_{g}]) (\delta_{h}\otimes v)& = \abs{M_{\bm{\ell}}}\delta_{gh}\otimes \left(\bm{\ell}(gh) - \bm{\ell}(h)\right)v - [M_{\bm{\ell}},\lambda_{g}]\abs{M_{\bm{\ell}}}(\delta_{h}\otimes v)\\
			& =\delta_{gh} \otimes \left[\abs{\bm{\ell}(gh)}\left(\bm{\ell}(gh) - \bm{\ell}(h)\right) - \left(\bm{\ell}(gh) - \bm{\ell}(h)\right)\abs{\bm{\ell}(h)}\right]v \\
		\end{split}
	\end{displaymath}
	which is in general not bounded. However if we make the technical assumptions 
	\begin{equation}\label{tech}
	[\bm{\ell}(g),|\bm{\ell}(h)|]=0, \quad \forall g,h \in G\end{equation}
	then
	\begin{displaymath}
		\delta^{k}([M_{\bm{\ell}}, \lambda_{g}]) = \delta_{gh}\otimes \left(\abs{\bm{\ell}(gh)} - \abs{\bm{\ell}(h)}\right)^{k}(\bm{\ell}(gh) - \bm{\ell}(h))v
	\end{displaymath}
	which is bounded by \eqref{stima}. This discussion can be summarized in the following result:
	\begin{prop}\label{23}If the length function $\bm{\ell}$ satisfies \eqref{tech} the spectral triple $(\mathbb{C}G, \ell^{2}(G)\otimes V, M_{\bm{\ell}})$ is regular.
	\end{prop}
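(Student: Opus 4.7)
By the definition of regularity recalled in Appendix \ref{regularity}, we need to show that both $\lambda_g$ and $[M_{\bm{\ell}},\lambda_g]$ lie in $\operatorname{Dom}^\infty(\delta)$ for every $g\in G$, where $\delta(T)=[|M_{\bm{\ell}}|,T]$. Since $\mathbb{C}G$ is the linear span of the $\lambda_g$'s, this suffices.

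The plan is to establish by induction on $k$ the formulas
\begin{displaymath}
\delta^k(\lambda_g)(\delta_h\otimes v)=\delta_{gh}\otimes(|\bm{\ell}(gh)|-|\bm{\ell}(h)|)^k v,
\end{displaymath}
\begin{displaymath}
\delta^k([M_{\bm{\ell}},\lambda_g])(\delta_h\otimes v)=\delta_{gh}\otimes(|\bm{\ell}(gh)|-|\bm{\ell}(h)|)^k(\bm{\ell}(gh)-\bm{\ell}(h))v,
\end{displaymath}
on a suitable dense core. The base cases $k=0,1$ are exactly the computations already displayed in the example preceding the statement. For the inductive step one uses that on simple tensors $\delta(T)(\delta_h\otimes v)=|M_{\bm{\ell}}|T(\delta_h\otimes v)-T(\delta_h\otimes|\bm{\ell}(h)|v)$ and simplifies the result using commutations among the operators $\bm{\ell}(g)$ and $|\bm{\ell}(h)|$.

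The key input is that the hypothesis \eqref{tech} implies, via continuous functional calculus applied to $\bm{\ell}(h)$, that $|\bm{\ell}(h)|$ commutes with $\bm{\ell}(g)$ for every $g,h\in G$, and in particular $[|\bm{\ell}(g)|,|\bm{\ell}(h)|]=0$. Specialising this to the pairs $(h,gh)$ and $(gh,h)$, one obtains that $\bm{\ell}(gh)-\bm{\ell}(h)$ commutes with both $|\bm{\ell}(gh)|$ and $|\bm{\ell}(h)|$, hence with $|\bm{\ell}(gh)|-|\bm{\ell}(h)|$ and with any polynomial in it. This is precisely what is needed in order to propagate the formulas through the induction.

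Once the two formulas are established, their uniform boundedness in $h$ is a direct consequence of the Kosaki estimate \eqref{stima}, namely $\||\bm{\ell}(gh)|-|\bm{\ell}(h)|\|\leq C\|\bm{\ell}(gh)-\bm{\ell}(h)\|$, together with the translation boundedness of $\bm{\ell}$, which yields $\sup_{h\in G}\|\bm{\ell}(gh)-\bm{\ell}(h)\|<\infty$ for every fixed $g\in G$. Raising to the $k$-th power preserves the uniform bound, and the resulting operators extend from the algebraic tensor product $\mathbb{C}G\otimes V$ to bounded operators on $\ell^2(G)\otimes V$. This places $\lambda_g$ and $[M_{\bm{\ell}},\lambda_g]$ in $\operatorname{Dom}^\infty(\delta)$, proving the regularity of the triple. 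The only subtle step is the bookkeeping of commutations among the $\bm{\ell}(\cdot)$'s and $|\bm{\ell}(\cdot)|$'s in the inductive step, which is exactly what \eqref{tech} makes possible.
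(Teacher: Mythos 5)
Your proof is correct and follows essentially the same route as the paper: the paper's proof of Proposition \ref{23} is just the discussion in the example preceding it, which derives the same formulas for $\delta^{k}(\lambda_{g})$ and $\delta^{k}([M_{\bm{\ell}},\lambda_{g}])$ under the commutation hypothesis \eqref{tech} and bounds them uniformly in $h$ via the Kosaki estimate \eqref{stima} together with translation boundedness. Your version merely makes explicit the induction on $k$ and the functional-calculus step deducing $[|\bm{\ell}(g)|,|\bm{\ell}(h)|]=0$ from \eqref{tech}, both of which the paper leaves implicit.
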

\end{exa}

\begin{rmk}
$(a)$	Note that the hypothesis in proposition \ref{23} is satisfied whenever $\bm{\ell}$ is a Clifford length function. 
\\
$(b)$ Estimate \eqref{stima} is true only for operators on finite dimensional vector spaces (see \cite{kato1973continuity}); this constitutes a serious obstruction to the generalization of the previous result to length functions with values in $\mathbb{B}_{h}(H)$ for $H$ infinite-dimensional Hilbert space.
\end{rmk}

\subsection{Regularity for the Equivariant Triple} \label{5.2}In this subsection we discuss the regularity of the triple $(C_{c}(G,\mathcal{A}), \widetilde{H},\widetilde{D},\hat{\pi}\times\hat{L})$ on $A\rtimes^{\rho,\sigma}_r {G}$ defined in \ref{equivariant}. We adopt the notations of section \ref{equivariant}. Recall in particular the spectral triples
\begin{equation}\label{factortriples}
(\mathcal{B},H,D) \quad \textrm{and}  \quad (C_c(G),\ell^2(G)\otimes V,M_{\bm{\ell},} \lambda)
\end{equation}
 where $\mathcal{B}$ is the algebra of finite sums $\sum_{g\in G} a_gU_g$ with $a_g \in \mathcal{A}$. The completion of $\mathcal{B}$ is the algebra $B\subset \mathbb{B}(H)$ image of the full crossed product by the integrated form of the covariant representation $(\pi,U)$. 
By proposition \ref{productrestriction} the equivariant triple is the restriction of  the product of the triples \eqref{factortriples}. Since the notion of regularity  is  well behaved with respect to the operation of restriction,
identifying sufficient conditions for the regularity of the two factor triples will give sufficient conditions for the regularity of our triple.

\begin{thm}\label{maintheorem}
	Let $(\mathcal{A}, H,D,\pi,U)$ be a $G$-equivariant odd spectral triple over a unital $C^{*}$-algebra $A$ and ${\bm{\ell}}:G \rightarrow \mathbb{B}_h(V)$ a proper translation bounded length function. \\
	
\noindent $(1)$.	If the spectral triples
$(\mathcal{B}, H,D)$ and
			$({C}_c(G), \ell^2 (G) \otimes V , M_{\bm{\ell}})$	 are regular,
 then the triple 
$(C_{c}(G,\mathcal{A}), \widetilde{H},\widetilde{D},\hat{\pi}\rtimes\hat{L})$ on $A\rtimes^{\rho,\sigma}_r {G}$ of the equivariant construction is regular. \\

\noindent $(2).$ Let $(\mathcal{A},H,D)$ be regular and let the group act by order zero operators, i.e. $U_g \in \operatorname{Op}^0(\Delta)$ for every $g\in G$ (the Laplacian being   relative to $D$ on $H$).  
Assume further that 
\begin{itemize}
\item	$[D,U_g]\in \operatorname{Op}^0(\Delta)$ for every $g\in G$,
\item for every integer $k$ and $g\in G$:
\begin{equation}\label{compatibilitywithlaplacian}
[\underbrace{\Delta,[\Delta, [\Delta, \dots [\Delta}_{k\textrm{\, times}},U_g  ]\,]\,]\in \operatorname{Op}^k(\Delta) \quad 	\textrm{and}\quad  [\underbrace{\Delta,[\Delta, [\Delta\dots}_{k\textrm{\, times}}\,,  [D,U_g  ]\,]\,]\in \operatorname{Op}^k(\Delta).\end{equation}
\end{itemize}
Then the triple $(\mathcal{B},H,D)$ is regular. \\

\noindent $(3)$ 
the converse of point $(2)$ holds. 
If $(\mathcal{B},H,D)$ is regular then $(\mathcal{A},H,D)$ is regular and  the group action satisfies:
$U_g \in \operatorname{Op}^0(\Delta)$,  $[D,U_g]\in \operatorname{Op}^0(\Delta)$ and \eqref{compatibilitywithlaplacian} for every integer $k$.
\end{thm}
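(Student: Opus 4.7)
\emph{Proof plan.}

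\textbf{Part (1).} The strategy is to combine Proposition \ref{productrestriction} with two general facts about regularity. By Proposition \ref{productrestriction}, the equivariant triple on $A\rtimes^{\rho,\sigma}_r G$ is the restriction, via the injective morphism $\psi_A$, of the exterior product of the two triples $(\mathcal{B},H,D)$ and $(C_c(G),\ell^2(G)\otimes V, M_{\bm{\ell}},\lambda)$. The exterior product of two regular spectral triples is regular (standard fact recorded in Appendix \ref{regularity}), and Proposition \ref{regularpullback} shows that pullbacks, in particular restrictions, of regular triples remain regular. Composing the two statements with the hypothesis gives (1) directly.

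\textbf{Part (2).} The task is to verify that $\mathcal{B}\subset\operatorname{Op}^0(\Delta)$ and $[D,\mathcal{B}]\subset\operatorname{Op}^0(\Delta)$, since then the equivalent $\operatorname{Dom}^\infty(\delta)$-characterization from Appendix \ref{regularity} gives regularity of $(\mathcal{B},H,D)$. A generic element of $\mathcal{B}$ is a finite sum $\sum_g a_g U_g$ with $a_g\in\mathcal{A}$, so it suffices to handle a single summand $aU_g$. First, regularity of $(\mathcal{A},H,D)$ delivers $a\in\operatorname{Op}^0(\Delta)$ and $[D,a]\in\operatorname{Op}^0(\Delta)$; second, the hypotheses provide $U_g,[D,U_g]\in\operatorname{Op}^0(\Delta)$, with the nested commutator conditions \eqref{compatibilitywithlaplacian} guaranteeing that repeated commutators with $\Delta$ stay at the correct order. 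Exploiting the fact that $\operatorname{Op}^0(\Delta)$ is closed under composition (this is the content of the abstract pseudodifferential calculus in Appendix \ref{regularity}), the product $aU_g$ lies in $\operatorname{Op}^0(\Delta)$, and the Leibniz expansion
\[
[D,aU_g] \;=\; [D,a]\,U_g \;+\; a\,[D,U_g]
\]
is likewise in $\operatorname{Op}^0(\Delta)$. Linearity concludes the verification for finite sums.

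\textbf{Part (3).} The converse uses that $\mathcal{A}\hookrightarrow\mathcal{B}$ via $a\mapsto a\cdot U_e$ and that each $U_g = 1_A\cdot U_g$ sits in $\mathcal{B}$. Regularity of $(\mathcal{B},H,D)$ means every element of $\mathcal{B}$ and its commutator with $D$ lies in $\operatorname{Op}^0(\Delta)$; restricting to the subalgebra $\mathcal{A}$ and invoking Proposition \ref{regularpullback} gives regularity of $(\mathcal{A},H,D)$, while specializing to the unitary generators $U_g$ gives both $U_g,[D,U_g]\in\operatorname{Op}^0(\Delta)$ and, unwinding the definition of $\operatorname{Op}^0(\Delta)$ in terms of iterated commutators with $\Delta$, the compatibility relations \eqref{compatibilitywithlaplacian}.

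\textbf{Main obstacle.} The delicate step is (2): one has to be sure that the product $a U_g$ truly stays inside $\operatorname{Op}^0(\Delta)$, which is why the assumptions are imposed \emph{symmetrically} on $U_g$ and on $[D,U_g]$. The iterated-commutator conditions \eqref{compatibilitywithlaplacian} are precisely what is needed to propagate order through the Leibniz rule applied to $[\Delta,[\Delta,\dots[\Delta, aU_g]\dots]]$; without this control the product may leave the smooth calculus even if each factor individually is smooth. Once the algebraic closure of $\operatorname{Op}^0(\Delta)$ under composition is established for the generators we care about, parts (2) and (3) are essentially dual unpackings of the definition, and the nontrivial geometric content of the theorem is concentrated in part (1) via the restriction/exterior-product mechanism.
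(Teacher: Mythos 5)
Part (1) of your plan is correct and is exactly the paper's argument: Proposition \ref{productrestriction} plus regularity of exterior products plus Proposition \ref{regularpullback}.

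Parts (2) and (3) rest on a mistaken criterion. You claim it suffices to check $\mathcal{B}\subset\operatorname{Op}^0(\Delta)$ and $[D,\mathcal{B}]\subset\operatorname{Op}^0(\Delta)$, ``since then the equivalent $\operatorname{Dom}^\infty(\delta)$-characterization \dots gives regularity.'' There is no such equivalence: Proposition \ref{propreg} gives only the forward implication (regular $\Rightarrow$ $\operatorname{Op}^0$), and membership in $\operatorname{Op}^0(\Delta)$ --- which by definition means boundedness on the whole Sobolev scale $W^s\to W^s$, not anything about commutators --- does not by itself control the iterated commutators with $\abs{D}$ (equivalently $\Delta^{1/2}$) that regularity requires. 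If it did, the hypotheses \eqref{compatibilitywithlaplacian} would be redundant in part (2). The correct bridge is Higson's theorem \ref{thm2}: one must show that $\mathcal{B}W^{\infty}\subseteq W^{\infty}$ (this is what $U_g\in\operatorname{Op}^0(\Delta)$ buys, together with $\pi(\mathcal{A})\subset\operatorname{Op}^0$) and that the canonical filtered algebra $\mathcal{E}_B$ generated by $\mathcal{B}$, $[D,\mathcal{B}]$ and iterated commutators with $\Delta$ satisfies $\mathcal{E}_B^{k}\subseteq\operatorname{Op}^{k}$ for every $k$. This is where \eqref{compatibilitywithlaplacian} enters: combined with $\mathcal{E}_A^{k}\subseteq\operatorname{Op}^{k}$ (from regularity of $(\mathcal{A},H,D)$ and theorem \ref{thm2}(1)) and the Leibniz rule for $[\Delta,\cdot]$, it forces $\mathcal{E}_B^{k}\subseteq\operatorname{Op}^{k}$, and then theorem \ref{thm2}(2) yields regularity of $(\mathcal{B},H,D)$. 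You gesture at this mechanism in your ``main obstacle'' paragraph, but the stated logical skeleton of (2) would not compile into a proof as written.

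The same slip recurs in (3): the conditions \eqref{compatibilitywithlaplacian} are not obtained by ``unwinding the definition of $\operatorname{Op}^0(\Delta)$ in terms of iterated commutators with $\Delta$'' (that is not its definition), but by applying theorem \ref{thm2}(1) to the regular triple $(\mathcal{B},H,D)$: its canonical GDO algebra satisfies $\mathcal{E}_B^{k}\subseteq\operatorname{Op}^{k}$, and since $1_A\in\mathcal{A}$ puts $U_g=1_A\cdot U_g$ and $[D,U_g]$ in $\mathcal{E}_B^{0}$, the iterated commutators land in $\mathcal{E}_B^{k}\subseteq\operatorname{Op}^{k}$ as claimed. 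The deduction that $(\mathcal{A},H,D)$ is regular as a restriction of $(\mathcal{B},H,D)$ is fine.
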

\begin{proof}
The first statement is clear by proposition \ref{regularpullback}. Concerning the second one, assume that the triple on $A$ is regular and denote with $\mathcal{E}_A$ its canonical algebra of $GDO$. 
Assuming $U_g$ of order zero implies that  $\mathcal{B}W^{\infty} \subset W^{\infty}$ and we can construct the canonical algebra of operators relative to $B$ as explained right before theorem \ref{thm2}. 
We call it $\mathcal{E}_B$. Now 
the remaining assumptions imply that $\mathcal{E}_B \supset \mathcal{E}_A$
is the canonical algebra of $GDO$ that, according to Higson's theorem \ref{thm2} witnesses that the triple over $\mathcal{B}$ is regular. \\
 Let's discuss point $(3)$. For every covariant representation $U_e=\operatorname{Id}$ so that we have an inclusion $\mathcal{A}\subset \mathcal{B}$ and the triple on $\mathcal{A}$ is the restriction of the one on $\mathcal{B}$. It follows that $(\mathcal{A},H,D)$ is regular. Moreover since $\mathcal{A}$ is unital we get all the stated conditions on the $U_g$'s just by direct examination of its canonical $GDO$ algebra. 
 \end{proof}

 \begin{exa}
 Let $\Gamma$ be a discrete group of diffeomorphisms of a smooth (compact) manifold preserving a triangular structure in the sense of \cite[Section 2]{conneslocalindexformula}. Then the spectral triple constructed by Connes and Moscovici satisfies the assumptions in  
 $(2)$ of the above theorem because in their case any $U_g$ and any $[D,U_g]$ belong to $\Dom^{\infty}(\delta)$ (see \cite[Theorem 1.1]{conneslocalindexformula}).
\end{exa}

Let us now briefly discuss the nature of 
the condition 
$[D,U_g]\in \operatorname{Op}^0(\Delta)$ of point $(2)$ in theorem \ref{maintheorem} in the manifold case. Intuitively, it says that the group is acting in an isometric fashion. Indeed, if we consider the case of an odd  Spin manifold, we can show the following fact, which is certainly well known, but it is proved here  for completeness.

 \begin{prop}
Let $M$ be a spin (Riemannian) compact odd dimensional manifold with spinor bundle $S$ and let $U:M \rightarrow M$ be a diffeomorphism that is covered by a fibrewise linear map $\widetilde{U}:S \rightarrow S$ which is unitary on the fibers. This means that we have a commutative diagram 
$$\xymatrix{S\ar[r]^{\widetilde{U}}\ar[d] & S\ar[d] \\
M\ar[r]^U & M.
}	$$
Let $\underbar{U}:\Gamma(M,S) \longrightarrow \Gamma(M,S)$
be the induced operator on sections: 
\begin{displaymath}
	(\underline{U}s)(x):=(\widetilde{U}_{x})^*s(U(x))
\end{displaymath} for $s\in \Gamma(M,S)$ and $x\in M$. 
Denote with $D$ the Dirac operator associated to $M$.
We have $\underline{U}\in \operatorname{Op}^0(\Delta)$
 and it follows that
 $[D,\underline{U}] \in \operatorname{Op}^0(\Delta)$ if and only if $U$ is an isometry.\end{prop}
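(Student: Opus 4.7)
The plan is to work locally, using a coordinate neighbourhood on $M$ together with an orthonormal frame $\{e_j\}$ and an induced trivialization of $S$, so that $D=\sum_j c(e^j)\nabla_{e_j}$ with Clifford multiplication $c$ and spin connection $\nabla$. Writing $A(x):=\widetilde U_x^{*}$, the operator $\underline U$ is expressed as $(\underline U s)(x)=A(x)\,s(U(x))$, i.e.\ the composition of pullback by the diffeomorphism $U$ with multiplication by the smooth bundle endomorphism $A$.

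To prove $\underline U\in\operatorname{Op}^0(\Delta)$, I would invoke the standard fact that on a compact Riemannian manifold both pullback by a smooth diffeomorphism and multiplication by a smooth section of $\operatorname{End}(S)$ extend to bounded operators on every Sobolev space $W^s(S)=\operatorname{Dom}((1+\Delta)^{s/2})$. Hence $\underline U$ preserves the Sobolev scale, and the same argument applied to the smooth data of $U$ and $A$ shows that all iterated commutators $[\Delta,[\Delta,\ldots[\Delta,\underline U]\ldots]]$ remain within the appropriate operator class. By the Higson characterisation recalled in Appendix~\ref{regularity}, this yields $\underline U\in\operatorname{Op}^0(\Delta)$.

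For the equivalence in the second assertion I would compute the leading-order part of $[D,\underline U]$ directly. A Leibniz computation in local coordinates gives
\begin{align*}
(D\underline U s)(x) &= \sum_{j,k} c(e^j(x))\,A(x)\,(\partial_j U^k)(x)\,(\partial_k s)(U(x))\;+\;\text{l.o.t.},\\
(\underline U D s)(x) &= \sum_k A(x)\,c(e^k(U(x)))\,(\partial_k s)(U(x))\;+\;\text{l.o.t.}
\end{align*}
Vanishing of the order-one part of $[D,\underline U]$ is therefore equivalent to the pointwise intertwining
\[
A(x)^{-1}\,c_x\!\bigl((dU_x)^{T}\xi\bigr)\,A(x)\;=\;c_{U(x)}(\xi),\qquad \xi\in T^{*}_{U(x)}M.
\]

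The final step is purely algebraic. This identity says that $\operatorname{Ad}(A(x))$ realises an algebra isomorphism of $\operatorname{Cl}(T^{*}_{U(x)}M,g_{U(x)})$ onto $\operatorname{Cl}(T^{*}_{x}M,g_x)$ sending $\xi$ to $(dU_x)^{T}\xi$; any such isomorphism must preserve the defining quadratic forms, so $(dU_x)^{T}$ is a linear isometry at every $x$, i.e.\ $U$ is a Riemannian isometry. Conversely, when $U$ is an isometry the $\operatorname{Spin}\to\operatorname{SO}$ covering provides a fibrewise unitary lift realising the intertwining, and in odd dimensions the irreducibility of the complex spinor representation forces any other fibrewise unitary cover to differ from this one by a scalar field, which commutes through the conjugation without affecting it. I expect the main technical obstacle to be the rigorous extraction of the principal part of $[D,\underline U]$, since $\underline U$ is a Fourier integral operator rather than a pseudodifferential one; once the local identity above is in hand, the equivalence with the isometric property of $U$ reduces to a short Clifford algebra argument.
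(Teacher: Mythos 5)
Your main line of attack is the same as the paper's: reduce boundedness of $[D,\underline{U}]$ to a pointwise Clifford intertwining identity and then deduce that $dU^t$ preserves the metric. The Clifford endgame you give (conjugation preserves squares, hence the quadratic form) is correct and essentially matches the paper, which phrases it via unitarity of $c(df)$ for unit covectors. But the gap you yourself flag --- how to rigorously extract a ``principal part'' of $[D,\underline{U}]$ when $\underline{U}$ is not a pseudodifferential operator --- is exactly the point the paper's proof is organized around, and it is not cosmetic: comparing the leading terms of the two compositions $D\underline{U}$ and $\underline{U}D$ does not by itself show that their difference is unbounded when those terms disagree. The paper's device is to pass to $A:=\underline{U}^*[D,\underline{U}]=\underline{U}^*D\underline{U}-D$, which, unlike $[D,\underline{U}]$, is local: using $\underline{U}(fs)=(f\circ U)\,\underline{U}s$ and $D(fs)=c(df)s+fDs$ one checks that $[A,f]$ is $C^{\infty}(M)$-linear, so $A$ is an honest differential operator of order at most one, and $[D,\underline{U}]\in \operatorname{Op}^0(\Delta)$ if and only if the principal symbol $\sigma^{1}(A)(d_xf)=i\,\widetilde{U}_{y}\,c(d_yU^t(d_xf))\,\widetilde{U}_{y}^{*}-i\,c(d_xf)$ (with $x=U(y)$) vanishes. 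You need this reduction (or an equivalent high-frequency test argument); with it, your intertwining condition and the isometry conclusion go through.

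Your argument for the converse direction is incorrect. Two fibrewise unitary maps $S\to S$ covering the same diffeomorphism $U$ differ by a unitary section of $\operatorname{End}(S)$ covering the identity, i.e.\ by an arbitrary unitary gauge transformation, not by a scalar field; irreducibility of the spinor representation forces such a transformation to be scalar only if it already commutes with the whole Clifford action, which is precisely what is in question. Concretely, take $U=\operatorname{id}_M$ (an isometry) and $\widetilde{U}$ a unitary section of $\operatorname{End}(S)$ not commuting with Clifford multiplication: then $[D,\underline{U}]$ contains the first-order term $\sum_j[c(e^j),\widetilde{U}^{*}]\nabla_{e_j}$ and is not of order zero. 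So ``$U$ isometric $\Rightarrow[D,\underline{U}]\in\operatorname{Op}^0(\Delta)$'' requires more than fibrewise unitarity of $\widetilde{U}$; it needs $\widetilde{U}$ to actually implement the Clifford intertwining (e.g.\ to be the spin lift of $dU$). To be fair, the paper's own proof also only carries out the ``only if'' direction in detail (symbol vanishes iff the intertwining diagram commutes, and the diagram forces $dU^t$ to be isometric); your mistake is in claiming to supply the implication that the paper leaves implicit.
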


 \begin{proof}

Since $M$ is compact, $\underline{U}$ is order zero (it does not involve derivatives) and invertible. It follows that 
$[D,\underline{U}]$ is order zero
 if and only if the operator
 $A:=\underline{U}^*[D,\underline{U}]=\underline{U}^*D\underline{U}-D$ is order zero. Though $\underline{U}$ is not a differential operator (it is non local), the operator $A$ is a differential operator of order no more than one so it will be order zero if and only if its principal symbol (of order one) vanishes. 
 More precisely let's check that 
 $\underline{U}^*D\underline{U}$ is differential of order one.
 Following \cite{Crainic}, we show that for every fixed section $s\in \Gamma(M,S)$ and $\varphi \in \Gamma(M,S^*)$ the operator
 $$C^{\infty}(M) \ni f\longmapsto \varphi(\underline{U}^*D\underline{U}(fs))\in C^{\infty}(M)$$
 is a differential operator.
 Since $\underline{U}(fs)=(f\circ U) \underline{U}s$, we can compute
  \begin{equation}\label{leibnitz}
  \varphi(\underline{U}^*D\underline{U}(fs))
  =\varphi(\underline{U}^*c(d(f\circ U))\underline{U}s)+ \varphi(f\underline{U}^*D\underline{U}s).
  \end{equation}
  Here for a function $f$ we denote with $c(df)$ the corresponding Clifford multiplication; indeed we have used the property of the Dirac operator: $D(fs)=c(df)s+fDs$.
  
From Equation \eqref{leibnitz} we have that $[A,f]$ is $C^{\infty}(M)$-linear and so $A$ is a differential operator. The same equation
 can be used to compute the principal symbol of $A$: let $s$ be a section of the spinor bundle and fix a point $x=U(y)\in M$. Then
 	\begin{displaymath}
 \begin{split}
 		\sigma^{1}(A)(d_xf)s_x =i[A,f]s \,\rest{x} &=i\underline{U}^*c(d(f\circ U))\underline{U}s-ic(df)s\, \rest{x} \\
 	&=i\widetilde{U}_{y}\,c(d_yU^t(d_xf)) \, (\widetilde{U}_{y})^*s_x-ic(d_xf) s_x.
 \end{split}
 	\end{displaymath}
 	The thesis follows quickly because this is zero if and only if we have a commutative diagram
 	\begin{equation}\label{intertwiner}\xymatrix{S_y\ar[d]_{c_y(d_yU^t\,(d_xf) ) }
 	\ar[r]^{\widetilde{U}_y} & S_{U(y)\ar[d]^{c(d_xf)}}   
 \\
 S_y\ar[r]_{\widetilde{U}_y} & S_{U(y)}	
 	}
 	\end{equation}
  for $d_yU^t$ the transpose of $dU$. Let us now show that this means that $dU^t$ is unitary: given a function $f$ such that $\|df\|=1$, we have that the linear map $c(df)$ is unitary. Since $\widetilde{U}$ is an isometry, we deduce that $c_y(d_yU^t\,(d_xf))$ is an isometry so that $\norm{d_yU^t\,(d_xf)}=1$. Then $dU^{t}$ preserves the norms. 
   \end{proof}

	\section{Finite Group Actions and Noncommutative Coverings} 
	In this section we introduce regular  noncommutative finite coverings with abelian structure group and we show that they are isomorphic to twisted crossed products with respect to the dual group,
	a result that apparently appeared in \cite{wagner2015noncommutative} for the first time. We also discuss in detail the meaning of the notion that was called regularity in \cite{aiello2017spectral} by comparing it with the familiar one of a free action.
	We refer to \cite{phillips2009freeness} and \cite{schwieger2015free} for further properties of group actions on $C^*$-algebras. \\
	
	In this section we shall always consider the action of a finite abelian group $G$ on the $C^*$-algebra $B$ with fixed point algebra $B^G$. 
	\begin{defn} \label{def-reg-cov}
		A \emph{finite (noncommutative) covering} with abelian group is an inclusion of  unital $C^*$-algebras $A\subset B$ together with an action of a finite abelian group $G$ on $B$ such that $A=B^G$. We say that $B$ is a covering of $A$ with \emph{deck transformation group} \mbox{given by $G$} and denote this structure by $G \curvearrowright B \supset A$.   
	\end{defn}

	\noindent For a finite noncommutative covering, the $G$-action (denoted by $g\cdot b=\gamma_g(b)$) decomposes $B$ in its closed \emph{spectral subspaces}
	\begin{displaymath}
		B_k:=\{b\in B \textrm{ s.t. } \gamma_g(b)=\langle k,g \rangle b \quad \forall g\in G \}, \quad  k\in \widehat{G}
	\end{displaymath}
	where $\widehat{G}$ is the Pontryagin dual group of $G$. Every $B_k$ is a Hilbert bimodule over $A$ with scalar right product $\langle b_1,b_2\rangle:=b_1^*b_2.$ and left product $\langle b_1,b_2\rangle:=b_1b_2^*$. 
	Notice that the induced norm on every $B_{k}$ coincides with the norm of $B$ so that every spectral subspace is complete from the beginning.
	The following relations hold (cf. e.g.  \cite{aiello2017spectral}).
	\begin{prop} \label{prop-1.1}
		With the above notation we have that:
		\begin{enumerate}
			\item[$(1)$] $B_hB_k\subset B_{h+k}$.			
			\item[$(2)$] if $b_k\in B_k$ is invertible, then $b_k^{-1}\in B_{-k}.$ Moreover $b_k^*\in B_{-k}$ for every $b_k \in B_k$.
			
			\item[$(3)$] Each $b\in B$ may be written as $\sum_{k\in \widehat{G}} b_k$ with $b_k\in B_k$.
		\end{enumerate}
	\end{prop}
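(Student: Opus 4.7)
The strategy is to prove $(1)$ and $(2)$ by direct computation using that each $\gamma_g$ is a $*$-automorphism and that $\langle k,g\rangle\in\T$, hence $\overline{\langle k,g\rangle}=\langle k,g\rangle^{-1}=\langle -k,g\rangle$; and to prove $(3)$ by constructing explicit spectral projections via Fourier averaging on the finite abelian group $G$, exploiting the orthogonality relations for characters.

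For $(1)$, given $b_h\in B_h$ and $b_k\in B_k$, I would simply compute
\[
\gamma_g(b_h b_k)=\gamma_g(b_h)\gamma_g(b_k)=\langle h,g\rangle\langle k,g\rangle\, b_h b_k=\langle h+k,g\rangle\, b_h b_k,
\]
which gives $b_hb_k\in B_{h+k}$. For $(2)$, I would use that $\gamma_g$ commutes with the $*$-operation and with taking inverses when the element is invertible, so $\gamma_g(b_k^*)=\gamma_g(b_k)^*=\overline{\langle k,g\rangle}\,b_k^*=\langle -k,g\rangle b_k^*$, and similarly $\gamma_g(b_k^{-1})=\langle k,g\rangle^{-1}b_k^{-1}=\langle -k,g\rangle b_k^{-1}$.

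The main content lies in $(3)$, and the natural tool is the family of averaged operators
\[
E_k(b):=\frac{1}{|G|}\sum_{g\in G}\overline{\langle k,g\rangle}\,\gamma_g(b),\qquad k\in\widehat{G},
\]
which are well defined because $G$ is finite. I would then verify two properties. First, $E_k(b)\in B_k$: applying $\gamma_h$ and reindexing the sum $g\mapsto h^{-1}g$, the unitary character relation $\overline{\langle k,h^{-1}g\rangle}=\langle k,h\rangle\overline{\langle k,g\rangle}$ yields $\gamma_h(E_k(b))=\langle k,h\rangle E_k(b)$. Second, $b=\sum_{k\in\widehat{G}}E_k(b)$: swapping the order of summation,
\[
\sum_{k\in\widehat{G}}E_k(b)=\frac{1}{|G|}\sum_{g\in G}\Bigl(\sum_{k\in\widehat{G}}\overline{\langle k,g\rangle}\Bigr)\gamma_g(b),
\]
and Pontryagin orthogonality for the finite abelian group $\widehat{G}$ gives $\sum_{k}\overline{\langle k,g\rangle}=|G|\,\delta_{g,e}$, leaving only the term $\gamma_e(b)=b$.

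There is no real obstacle here; the only point to handle carefully is keeping track of the duality conventions (that the contragredient of a character is its conjugate, and that $\widehat{\widehat G}\simeq G$), since the argument for $(3)$ relies on the orthogonality relations in both forms $\sum_{g}\langle k,g\rangle=|G|\delta_{k,0}$ and $\sum_{k}\langle k,g\rangle=|G|\delta_{g,e}$. Since the sums are finite, no analytic subtleties arise, so no density argument or norm estimate is needed; the decomposition in $(3)$ is literally an equality in $B$, not merely in some completion.
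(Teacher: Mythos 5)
Your proof is correct and is the standard argument: parts $(1)$ and $(2)$ follow from multiplicativity of characters and the fact that each $\gamma_g$ is a $*$-automorphism, and part $(3)$ from the Fourier averaging operators $E_k$ together with the orthogonality relations for characters of a finite abelian group. The paper itself omits the proof, deferring to the cited reference, where exactly this spectral-projection argument is used, so there is nothing to add.
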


	Let us now consider  the ordinary crossed product $B\rtimes G$. We can construct a canonical $B\rtimes G - A$ - Hilbert bimodule ${}_{B \rtimes G\,} \mathcal{E}_A$ in the following way:  consider $B$ endowed with
	\begin{itemize}
		\item left action $b_g\delta_g \cdot x:=b_g\gamma_g(x)$ and left inner product 
		\begin{displaymath}
			{}_{B\rtimes G}\langle b_1,b_2 \rangle:= \sum_{g\in G}b_1 \gamma_g(b_2^*)\delta_g,
		\end{displaymath}
		\item right action $b \cdot a:=ba$ and right inner product 
		\begin{displaymath}
			\langle b_1,b_2 \rangle_A:= \frac{1}{|G|} \sum_{g\in G}\gamma_{g}(b_1^*b_2).
		\end{displaymath}
	\end{itemize}
	Then ${}_{B \rtimes G\,} \mathcal{E}_A$ is the bimodule obtained by completing $B$ in the usual way. It turns out that ${}_{B \rtimes G\,} \mathcal{E}_A$ is almost a Morita equivalence bimodule for it may lack the fullness of the left product ${}_{B\rtimes G\,}\langle \cdot,\cdot \rangle$.
	
	\begin{defn} 
The group action is {\em{free}} or equivalently {\em{saturated}}, as called by Rieffel (see \cite[theorem 1.5 and definition 1.6]{MR1103376} and \cite{MR911880}), when ${}_{B\rtimes G\,}\mathcal{E}_{A}$ is a Morita equivalence bimodule.
	\end{defn}
	When $B=C(X)$ with $X$ Hausdorff and compact a theorem of Rieffel 
	(see \cite{MR911880} proposition 7.1.12 and theorem 7.2.6)
	shows that the action $\gamma: G \rightarrow \operatorname{Aut}(C(X))$ is free if and only if the corresponding action $X \curvearrowleft G$ is free in the familiar sense.
	\begin{defn}
		The canonical map $\operatorname{can}: B\odot B \rightarrow B\otimes C(G)$ defined on the algebraic tensor product is the $B-A$-module map given by 
		\begin{equation}
			\operatorname{can} (x \odot y):=\sum_{g\in G} x \gamma_g(y)\otimes \delta_g.
		\end{equation} We say that the $G$-action satisfies the {\em{Elwood condition}} if the map $\operatorname{can}$ has dense range in $B\otimes C(G)$ with respect to the tensor product $C^*$-norm.
	\end{defn}
	
	\begin{thm}[see \cite{phillips2009freeness,schwieger2015free}]\label{characterizationfreeness}Let $G \curvearrowright B \supset A$ be a finite covering with abelian group. The following conditions are equivalent 
		\begin{enumerate}
			\item The action is free.
			\item The action satisfies the Elwood condition.
			\item Every spectral subspace $B_k$ for $k\in \widehat{G}$ is a $A-A$ Morita equivalence bimodule.
			\item The multiplication map induces an isomorphism $B_{-k} \otimes_A B_k \cong A$ of Hilbert $A-A$-modules for every $k \in \widehat{G}$. Here $B_{-k} \otimes_A B_k$ is the balanced tensor product and the image of the multiplication map from $B_{-k}\otimes_A B_k$ coincides with $\overline{B_{-k}B_k}$, the closure in $B$ of the linear span of all the products $b_1b_2$ with $b_1 \in B_{-k}$ and $b_2 \in B_k$.
		\end{enumerate}	
		Moreover if the action is free any $B_k$ is finitely generated and projective as a right $A$-module.
	\end{thm}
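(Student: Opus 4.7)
The plan is to exploit the Fourier decomposition $B = \bigoplus_{k \in \widehat{G}} B_k$ to reduce all four conditions to the single family of density relations $\overline{B_{-k} B_k} = A$ for every $k \in \widehat{G}$, and then derive finite generation and projectivity via a Neumann-series/idempotent argument. First I would record how the inner products of $\mathcal{E}_A$ interact with the spectral decomposition: the right inner product $\langle b_1, b_2\rangle_A = \frac{1}{|G|}\sum_g \gamma_g(b_1^* b_2)$ vanishes on $B_j \times B_l$ unless $j = l$, in which case it equals $b_1^* b_2 \in A$; and the left $(B\rtimes G)$-inner product takes $B_j \times B_l$ into $B_{j-l}\cdot u_{-l}$, with $u_k := \sum_g \langle k,g\rangle \delta_g \in \mathbb{C}G$. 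Since $1_A \in A = B_0$, the right inner product on $\mathcal{E}_A$ is automatically full, so $(1)$ is equivalent to density of the left inner product in $B \rtimes G$.

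Next I would prove $(1)\Leftrightarrow(2)\Leftrightarrow(3)$ by spectral bookkeeping. For $x\in B_j$, $y\in B_l$ one has $\operatorname{can}(x\otimes y) = xy \otimes \chi_l \in B_{j+l}\otimes \chi_l$ with $\chi_l(g) := \langle l, g\rangle$, so the Elwood condition decomposes across the Fourier basis $\{\chi_l\}$ of $C(G)$ into the family $\overline{B_{m-l} B_l} = B_m$ for all $m, l \in \widehat{G}$. The case $m=0$ is exactly $\overline{B_{-l} B_l} = A$, and the other cases follow by approximating $1_A$ in $\overline{B_{-l} B_l}$ and left-multiplying by $b \in B_m$ (using $B_m A = B_m$). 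The same spectral calculation applied to the left inner product of $\mathcal{E}_A$, combined with linear independence of the $u_k$, yields the identical family of conditions. Finally, $B_k$ being an $A$-$A$ Morita bimodule demands fullness of both its right inner product $b_1^* b_2$ and its left inner product $b_1 b_2^*$, namely $\overline{B_{-k}B_k}=A$ and $\overline{B_kB_{-k}} = A$; since $\widehat{G}$ is closed under $k \mapsto -k$ these coincide as families, and the compatibility ${}_A\langle b_1,b_2\rangle b_3 = b_1\langle b_2, b_3\rangle_A$ is just associativity.

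For $(3)\Leftrightarrow(4)$ I would identify the conjugate bimodule $B_k^*$ with $B_{-k}$ via $b^*\mapsto b^*$; a direct check shows the induced left and right $A$-actions on $B_k^*$ match ordinary multiplication on $B_{-k}$. Under this identification the canonical inner-product map $B_k^* \otimes_A B_k \to A$, $b_1^* \otimes b_2 \mapsto b_1^* b_2$, becomes the multiplication map $B_{-k}\otimes_A B_k \to A$, whose image is by construction $\overline{B_{-k} B_k}$. Standard Morita theory says this map is an isomorphism for a Morita equivalence bimodule, giving $(3)\Rightarrow(4)$. Conversely an isomorphism $B_{-k}\otimes_A B_k\cong A$ forces $\overline{B_{-k} B_k} = A$, and applying the same to $-k$ gives $\overline{B_k B_{-k}}=A$, which identifies $\mathcal{K}_A(B_k)$ with $A$ acting by left multiplication and makes $B_k$ an $A$-$A$ Morita bimodule.

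For the moreover statement, the linear span $B_k B_{-k}$ is a two-sided ideal of $A$ (by $AB_k\subset B_k$ and $B_{-k}A\subset B_{-k}$), dense in $A$ by $(3)$; in a unital $C^*$-algebra a dense ideal must contain $1_A$ algebraically (pick $a$ in the ideal with $\|a-1_A\|<1$, invert by a Neumann series, and obtain $1_A = a^{-1}a$ inside the ideal). So there exist $b_1,\dots,b_n \in B_k$ and $c_1,\dots,c_n\in B_{-k}$ with $\sum_i b_i c_i = 1_A$, and the maps $\Phi : B_k \to A^n$, $b\mapsto (c_i b)_i$, and $\Psi : A^n \to B_k$, $(a_i)_i\mapsto \sum_i b_i a_i$, satisfy $\Psi\Phi = \mathrm{id}_{B_k}$, so $p := \Phi\Psi = (c_i b_j)_{i,j} \in M_n(A)$ is an idempotent with $B_k \cong p\cdot A^n$ as a right $A$-module, hence finitely generated projective. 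The main obstacle throughout is the careful left-versus-right bookkeeping, especially the identification $B_k^* \cong B_{-k}$; once that is in place, everything reduces to the single family of spectral density identities and the rest is routine Morita theory.
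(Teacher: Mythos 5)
The paper does not actually prove Theorem \ref{characterizationfreeness}: it is quoted from the literature with a pointer to \cite{phillips2009freeness,schwieger2015free}, so there is no internal argument to compare against. Your proof is, as far as I can check, correct, and it is essentially the standard argument from those references: everything is funneled through the single family of density identities $\overline{B_{-k}B_k}=A$, $k\in\widehat G$. The key computations all check out: the right inner product of $\mathcal{E}_A$ kills $B_j\times B_l$ for $j\neq l$ and is full because $\langle 1,1\rangle_A=1$, so (1) reduces (as the paper itself notes) to fullness of the left inner product; the left inner product and the map $\operatorname{can}$ both decompose over the character basis $\{u_k\}$, resp.\ $\{\chi_l\}$, of $\mathbb{C}G$, resp.\ $C(G)$, into the family $\overline{B_{m-l}B_l}=B_m$, which collapses to the $m=0$ case by multiplying an approximate factorization of $1_A$ by $b\in B_m$; the identification $B_k^*\cong B_{-k}$ turns the canonical pairing of Morita theory into the multiplication map of (4); and the idempotent $p=(c_ib_j)_{i,j}$ built from $\sum_i b_ic_i=1$ gives $B_k\cong pA^n$. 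Two points you should make explicit to be fully rigorous, though both are routine: (i) in reducing density in $B\rtimes G$, $B\otimes C(G)$ and $B$ to componentwise density you are using that all the relevant decompositions ($\bigoplus_l Bu_{-l}$ and $\bigoplus_m B_m$) are finite direct sums with bounded projections, so density of a subspace is equivalent to density of each of its spectral components; (ii) for (3) you should note that adjointability of the left $A$-action with respect to the right inner product and positivity of both inner products are automatic from $\langle ab_1,b_2\rangle_A=\langle b_1,a^*b_2\rangle_A$ and $b^*b\geq0$, so that fullness of the two inner products is genuinely the only condition at stake. With those remarks added, the argument is complete and self-contained, which is arguably an improvement over the bare citation in the text.
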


We say that an element $x$ in a  right $A$-module $X$ is a generator if 
$X=\{x a, a\in A\}$. Analogously, an element $x$ in a  left $A$-module $X$ is a generator if $X=\{ax, a\in A\}$.
\begin{lemma}\label{right-generator}
Let $A$, $B$ and $G$ as above. Then
\begin{enumerate}
\item If the action is free and, for any $k\in\widehat{G}$, $B_k$  has a generator $\mu_k$ as a right $A$-module, then any $\mu_k$ has a right-inverse in $B_{-k}$.
\item If the action is free and, for any $k\in\widehat{G}$, $B_k$  has a generator $\mu_k$ as a left $A$-module, then any $\mu_k$ has a left-inverse in $B_{-k}$.
\end{enumerate}
\end{lemma}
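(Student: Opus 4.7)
The plan is to exploit Theorem \ref{characterizationfreeness}: freeness yields the Morita-type isomorphism $B_{-k} \otimes_A B_k \cong A$ via multiplication and, by applying the same result to $-k$ in place of $k$, also $B_k \otimes_A B_{-k} \cong A$. Together with the concluding assertion of that theorem, namely the finite generation of $B_k$ and $B_{-k}$ as projective right $A$-modules, this allows one to realise $1_A$ as an \emph{explicit finite sum} of products of elements of $B_{\pm k}$, which can then be collapsed to a single product by substituting the hypothesised cyclic generator.

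For part $(2)$ one starts from a finite expression $1_A = \sum_{i=1}^{n} \nu_i y_i$ with $\nu_i \in B_{-k}$ and $y_i \in B_k$, obtained as the image under multiplication of the preimage of $1_A$ under $B_{-k} \otimes_A B_k \cong A$. Using the left-generator property of $\mu_k$, one rewrites $y_i = a_i \mu_k$ with $a_i \in A$; collecting to the right yields $1_A = \bigl(\sum_i \nu_i a_i\bigr)\mu_k$, and the element $\nu := \sum_i \nu_i a_i$ belongs to $B_{-k}$ since $B_{-k}$ is a right $A$-module, giving the sought left inverse. The argument for $(1)$ is entirely parallel: use the isomorphism $B_k \otimes_A B_{-k} \cong A$ to write $1_A = \sum_j y'_j \nu'_j$, substitute $y'_j = \mu_k a_j$ via the right-generator hypothesis, and collect to the left to produce $\mu_k\bigl(\sum_j a_j \nu'_j\bigr) = 1_A$, exhibiting the right inverse.

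The only non-formal point is the claim that the preimage of $1_A$ under the Morita isomorphism is a \emph{finite} sum rather than merely an element of a completed tensor product. This is precisely where finite generation enters: if $e_1,\dots,e_n$ generate $B_k$ as a right $A$-module, every element of $B_{-k} \otimes_A B_k$ admits a representative of the form $\sum_{i=1}^{n} \xi_i \otimes e_i$, so the preimage of $1_A$ has this shape and its image is automatically a finite sum in $A$. I expect this to be the only delicate step — once the finite-sum identity is secured, the remainder of both proofs reduces to straightforward algebra inside $B$, using only $B_{-k}B_0 \subset B_{-k}$ and $B_0 B_{-k} \subset B_{-k}$ from Proposition \ref{prop-1.1}(1).
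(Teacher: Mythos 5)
Your argument is correct and follows essentially the same route as the paper: both invoke the surjectivity of the multiplication map $B_{\mp k}\otimes_A B_{\pm k}\to A$ (condition (4) of Theorem \ref{characterizationfreeness}) to write $1_A$ as a product involving the cyclic generator, and then read off the one-sided inverse by factoring $\mu_k$ out of the sum. The paper collapses the tensor to a single elementary tensor $\mu_{-k}\otimes\mu_k c$ before applying the multiplication map (and obtains part (1) by letting $k$ range over all of $\widehat{G}$), whereas you collapse the finite sum after multiplying; your explicit justification that $1_A$ admits a \emph{finite}-sum preimage is a point the paper's proof passes over in silence.
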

\begin{proof}
$(1)$: 
Since  $\mu_k$ is a generator for $B_k$ as a right $A$-module, any element in $B_{-k}\otimes_A B_k$ has a representative in $B_{-k}\otimes B_k$ of the form $\mu_{-k}\otimes \mu_k c$, $c\in A$, even though $c$ is not unique in principle. 
Since  $p:\mu_{-k}\otimes \mu_k c\mapsto\mu_{-k} \mu_k c$ is surjective,
 there exists $c\in A$ such that $\mu_{-k}\mu_{k}c=1$, namely $\mu_{k}c\in B_k$ is a right inverse of $\mu_{-k}$. This shows that each $\mu_k$  has a right-inverse in $B_{-k}$. (2): This is similar to (1).
\end{proof}
The following Lemma is probably well-known, we include the proof for the sake of completeness.
\begin{lemma}\label{poldec}
Let $C$ be a $C^*$-algebra, $c\in C$ a left-invertible element. Then $c$ has a polar decomposition $c=vh$ with $v,h\in C$, $h$ an invertible positive element and $v$ isometric.
\end{lemma}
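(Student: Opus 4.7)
The plan is to reduce to the existence of functional calculus on an invertible positive element and extract the partial isometry explicitly. First I would argue that left-invertibility of $c$ forces $c^*c$ to be invertible in $C$: if $d\in C$ satisfies $dc=1$, then
\begin{equation*}
1=(dc)^*(dc)=c^*d^*dc\leq\|d^*d\|\,c^*c=\|d\|^2\,c^*c,
\end{equation*}
using the standard $C^*$-inequality $d^*d\leq\|d\|^2\cdot 1$. Hence $c^*c\geq\|d\|^{-2}\cdot 1$, so $c^*c$ is a positive element whose spectrum is bounded away from $0$, and is therefore invertible in $C$.

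With this in hand I would simply define, via continuous functional calculus applied to $c^*c$,
\begin{equation*}
h:=(c^*c)^{1/2}\in C,
\end{equation*}
which is positive and invertible (its spectrum is contained in $[\|d\|^{-1},\|c\|]$), and then set
\begin{equation*}
v:=ch^{-1}\in C.
\end{equation*}
A direct computation gives $v^*v=h^{-1}c^*ch^{-1}=h^{-1}h^2h^{-1}=1$, so $v$ is an isometry in $C$, and trivially $vh=ch^{-1}h=c$, which yields the desired polar decomposition.

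I do not see a genuine obstacle here; the only subtle point is the first step, showing $c^*c$ is invertible purely from the $C^*$-algebraic hypothesis of left-invertibility (with no recourse to a concrete Hilbert space representation), which is handled by the inequality $c^*d^*dc\leq\|d\|^2c^*c$ above. Once invertibility of $c^*c$ is established, the polar decomposition is formal and stays entirely inside $C$, since $h^{-1}$ is obtained from the bounded continuous function $t\mapsto t^{-1/2}$ on a compact subset of $(0,\infty)$ containing the spectrum of $c^*c$.
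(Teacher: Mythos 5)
Your proof is correct, and it reaches the same decomposition $h=(c^*c)^{1/2}$, $v=ch^{-1}$ as the paper, but by a genuinely different route at the one non-trivial step, namely the invertibility of $c^*c$. The paper identifies $C$ with a concrete algebra of operators on a Hilbert space, takes the polar decomposition $c=vh$ in $\mathbb{B}(H)$, notes that left-invertibility makes $c$ injective (so $v$ is an isometry) and that $bvh=I$ gives $h$ a left inverse in $\mathbb{B}(H)$, whence the self-adjoint $h$ is invertible and $v=ch^{-1}$ lands back in $C$ by spectral permanence. You instead stay entirely inside $C$: the inequality $1=c^*d^*dc\leq\|d\|^2\,c^*c$ bounds the spectrum of $c^*c$ away from $0$, after which everything is continuous functional calculus. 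Your argument is more elementary and self-contained --- it needs no representation, no appeal to the Borel-functional-calculus polar decomposition in $\mathbb{B}(H)$, and no spectral-permanence step to pull $h^{-1}$ back into $C$ --- while the paper's version is shorter to state if one takes the operator-theoretic polar decomposition as given. Both are complete proofs.
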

\begin{proof}
Let us identify $C$ with an algebra of operators acting on a Hilbert space $H$, and let $c=vh$ be the polar decomposition in $B(H)$. Since $c$ has a left inverse, it is injective, hence $v$ is an isometry. If $b\in C$ is a left inverse, then $bvh=I$, hence $h$ has a left inverse. Being self-adjoint, it is indeed invertible, therefore $v=ch^{-1}$ is in $C$ 
\end{proof}
\begin{thm}\label{rank1thm}
The following are equivalent:
\begin{enumerate}
\item For any $k\in\widehat{G}$, $B_k$ contains an element which is unitary in $B$.
\item For any $k\in\widehat{G}$, $B_k$ contains an element which is invertible in $B$.
\item For any $k\in\widehat{G}$, $B_k$ is a free, rank-1, right $A$-module.
\item For any $k\in\widehat{G}$, $B_k$ is a free, rank-1, left $A$-module.
\item For any $k\in\widehat{G}$, $B_k$ contains an element which is a generator both as a right and left $A$-module, and the action is free.
\end{enumerate}
\end{thm}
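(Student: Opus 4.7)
I organize the proof around condition (2) as a pivot, establishing $(1)\Leftrightarrow(2)$, $(2)\Leftrightarrow(5)$, and $(3)\Leftrightarrow(4)\Leftrightarrow(2)$, relying throughout on the adjoint symmetry $B_k^*=B_{-k}$ and the product rule $B_hB_k\subset B_{h+k}$ of Proposition \ref{prop-1.1}.

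The equivalence $(1)\Leftrightarrow(2)$ is short. The direction $(1)\Rightarrow(2)$ is trivial. For $(2)\Rightarrow(1)$, let $c\in B_k$ be invertible in $B$; then $c^*c\in B_{-k}B_k\subset A$ is positive and invertible, so $h:=(c^*c)^{1/2}\in A$. Lemma \ref{poldec} applied to $c$ gives $c=vh$ with $v=ch^{-1}\in B_k\cdot A\subset B_k$ an isometry, and since $c$ is invertible $v$ is unitary, proving (1).

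For $(2)\Rightarrow(3),(4),(5)$, pick an invertible $u\in B_k$; by Proposition \ref{prop-1.1}(2), $u^{-1}\in B_{-k}$. Every $b\in B_k$ factors both as $u(u^{-1}b)$ and as $(bu^{-1})u$ with $u^{-1}b,bu^{-1}\in A$, so $u$ is a common generator; injectivity (and rank-$1$ freeness) follows from invertibility of $u$. For the freeness required in (5), use criterion (4) of Theorem \ref{characterizationfreeness}: the multiplication $B_{-k}\otimes_A B_k\to A$ sends $u^{-1}\otimes u\mapsto 1$, and since $u^{-1}\otimes u$ generates $B_{-k}\otimes_A B_k$ as an $A$-bimodule it is an isomorphism. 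Conversely, $(5)\Rightarrow(2)$ is a direct application of Lemma \ref{right-generator}: the common generator $\mu_k$ has both a right and a left inverse in $B_{-k}$, and these must coincide, producing an inverse of $\mu_k$ in $B$.

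The equivalence $(3)\Leftrightarrow(4)$ comes from the adjoint: if $B_k=\mu_kA$ is free right rank-$1$, then $B_{-k}=B_k^*=A\mu_k^*$, and injectivity of $a\mapsto a\mu_k^*$ follows from injectivity of $a\mapsto\mu_ka^*$, so $B_{-k}$ is free left rank-$1$; applied to every $k$ this gives (4), and the reverse is symmetric. The delicate step is then $(3)\Rightarrow(2)$: the plan is to first prove that the action is free and thereby reduce to the already-handled case. Writing $B_k=\mu_k A$, $B_{-k}=\mu_{-k}A$ and using $A\mu_k\subset\mu_kA$ from rank-$1$ freeness, one checks $B_{-k}B_k=\mu_{-k}\mu_kA$; freeness of the action by Theorem \ref{characterizationfreeness}(4) then amounts to $\overline{\mu_{-k}\mu_kA}=A$ for every $k$. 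Once this density is secured, Lemma \ref{right-generator}(1) furnishes a right inverse $\nu\in B_{-k}$ of $\mu_k$, and the identity $\mu_k(\nu\mu_k)=(\mu_k\nu)\mu_k=\mu_k$ combined with right rank-$1$ freeness forces $\nu\mu_k=1$, so $\nu$ is a two-sided inverse; this is (2).

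The main obstacle is precisely the density statement $\overline{\mu_{-k}\mu_kA}=A$: the purely algebraic rank-$1$ condition does not automatically yield the $C^*$-algebraic invertibility (equivalently, topological generation) of $\mu_{-k}\mu_k$ in $A$. The idea is to exploit the positivity of $\mu_k^*\mu_k$, the $*$-structure linking $B_k$ and $B_{-k}$, and the fact that the hypothesis holds simultaneously at every $k\in\widehat{G}$, to exclude the possibility that $\mu_{-k}\mu_k$ lies in a proper closed right ideal of $A$. This upgrade from algebra to analysis is the genuine content separating the module-theoretic conditions (3), (4) from the operator-algebraic conditions (1), (2).
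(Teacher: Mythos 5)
Your treatment of $(1)\Leftrightarrow(2)$, $(2)\Rightarrow(3),(4),(5)$, $(5)\Rightarrow(2)$ and $(3)\Leftrightarrow(4)$ is correct and essentially coincides with the paper's arguments; the observation that a right inverse $\nu\in B_{-k}$ of $\mu_k$ is automatically two-sided, via $\mu_k(\nu\mu_k)=(\mu_k\nu)\mu_k=\mu_k\cdot 1$ and injectivity of $a\mapsto\mu_k a$, is a nice touch. The problem is the remaining implication $(3)\Rightarrow(2)$, which is the nontrivial content of the theorem, and there your argument stops at an unproved claim: you reduce everything to the density $\overline{\mu_{-k}\mu_k A}=A$ (equivalently, surjectivity of the multiplication map $B_{-k}\otimes_A B_k\to A$, i.e.\ freeness of the action in the sense of Theorem \ref{characterizationfreeness}), and then only describe ``the idea'' of exploiting positivity of $\mu_k^*\mu_k$ without carrying it out. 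As it stands this is a genuine gap: Lemma \ref{right-generator} cannot be invoked from (3) alone, because its hypothesis is precisely the freeness you have not established, so your cycle does not close.

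The paper closes this step with a different device. Granting a one-sided inverse of $\mu_k$ (which the paper obtains from Lemma \ref{right-generator}), it applies Lemma \ref{poldec} to write $\mu_k=hv^*$ with $h=(\mu_k\mu_k^*)^{1/2}\in A$ invertible and $v^*=h^{-1}\mu_k\in B_k$, so that $I-vv^*\in A$; then the computation $\Phi(I-vv^*)=\mu_k(I-vv^*)=hv^*(I-vv^*)=0$, together with the injectivity of the right-module isomorphism $\Phi$, forces $vv^*=I$, i.e.\ $v^*\in B_k$ is unitary. It is this ``polar decomposition plus injectivity of $\Phi$'' argument, converting the module hypothesis directly into unitarity, that your sketch lacks. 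If you want a route that never presupposes freeness of the action, note that a bijective right-module map $\Phi\colon A\to B_k$ is bounded below by the open mapping theorem, hence $\mu_k^*\mu_k$ is bounded below as a left multiplier of $A$ and therefore invertible by functional calculus (its spectrum cannot meet $(0,\eps^2)$, and $0$ cannot be an isolated spectral point since the corresponding spectral projection would be annihilated); this yields $\mu_k=wg$ with $g\in A$ invertible and $w\in B_k$ an isometry generating $B_k$, whence $B_{-k}(I-ww^*)=\big((I-ww^*)B_k\big)^*=0$ and injectivity of $a\mapsto\mu_{-k}a$ give $ww^*=I$. Some such argument must be supplied before $(3)\Rightarrow(2)$ can be considered proved.
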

\begin{proof}
$(1)\Rightarrow (2)$ is obvious.
%$(2)\Rightarrow (1)$: if $\mu_k\in B_k$ is invertible in $B$, $a_k=\sqrt{\mu_k^*\mu_k}$ is invertible in $A$, hence $u_k=\mu_k a_k^{-1}\in B_k$ is invertible in $B$. Since $\mu_k=u_ka_k$ is the polar decomposition, $u_k$ is unitary. 
%and is clearly a generator for $B_k$ both as a right and as a left $A$-module.
\\
$(2)\Rightarrow (3)$: the map $a\in A\mapsto \mu_k a\in B_k$ is a right $A$-module isomorphism. As for the freeness, we have to show that, for any $k\in\widehat{G}$, the $A$-bimodule map
$$
\begin{matrix}
p: &B_{-k}&\otimes_A &B_k&\to &A\\
&b_{-k}&\otimes &b_k&\mapsto &b_{-k}b_k
\end{matrix}
$$
 is a bijection. 
Such a map is always injective so we just have to prove surjectivity.
 %Since the  $\mu_k$ are invertible, for any element $x\in B_{-k}\otimes_A B_k$ there is a unique element $a\in A$ such that  $\mu_{-k}\otimes \mu_k a\in B_{-k}\otimes B_k$ is a representative of $x$, and $p(\mu_{-k}\otimes \mu_k a) = \mu_{-k} \mu_k a$ is clearly injective. 
 Setting $a=\mu_k^{-1}\mu_{-k}^{-1}$, we have $a\in A$ and $p(\mu_{-k}\otimes \mu_k a) = 1$, which implies that $p$ is surjective.
\\
$(3)\Rightarrow (1)$: if $\Phi:A\to B_k$ is a right $A$-module isomorphism, namely $\Phi(a_1)a_2=\Phi(a_1 a_2)$, $\mu_k:=\Phi(1)$ is clearly a generator of $B_k$ as a right $A$-module.
Then, according to Lemma \ref{right-generator}, $\mu_k$ has a right-inverse, hence $\mu_k^*$ has a left inverse in $B_k$ and, by Lemma \ref{poldec}, $\mu_k=hv^*$ with $v,h\in B$, $v$ isometric and $h$ invertible. Indeed, $h\in A$ and $v_k^*\in B_k$.
We observe that $I-vv^*$ is in $A$, and $\Phi(I-vv^*)=\mu_k(I-vv^*)=hv^*(I-vv^*)=0$. Since $\Phi$ is an $A$-module isomorphism, we get $I-vv^*=0$, namely $v$ and $v^*$ are unitary.
\\
By the arguments above, $(3)\Leftrightarrow (4)$ and $(2)\Rightarrow (5)$ are obvious.
\\
$(5)\Rightarrow (2)$: Let $\mu_k\in B_k$ be a generator of $B_k$ both as a right and left $A$-module. By Lemma \ref{right-generator}, $\mu_k$ has a right-inverse and a left-inverse, namely it is invertible.
\end{proof}

	\begin{defn}\label{unitaries}
		We say that the finite covering $G \curvearrowright B \supset A$  is {\emph{rank-1 regular}}  if one of the equivalent conditions of theorem \ref{rank1thm} is satisfied. In this case, the action is said to be \emph{rank-1 free}. Any map $\mu:\widehat{G} \longmapsto U(B)$ with ${\mu}_j \in B_j$ and $\mu_e=I$  is called a \emph{frame}.
	\end{defn}

	\begin{rmk}\label{regcov}
		\item{$(i)$} 	
Condition (1) in theorem \ref{rank1thm} has been called regularity in 
\cite{aiello2017spectral} and it has been proved to imply freeness by making use of the Elwood  condition.

		\item{$(ii)$} Rank-1 freeness also implies that the action is faithful. Indeed, if there exists a nontrivial $g\in G$ acting trivially, we may find $k\in\widehat G$ such that $\langle k,g\rangle\ne1$. Therefore the equation $\gamma_g(b)=\langle k,g \rangle b$ is satisfied only for $b=0$ and $B_k$ does not contain any invertible element.
		\item{$(iii)$} As $\mu_{k}\in B_{k}$, we have that $\mu_{k}A\mu_{k}^{*} = A$ for any $k\in \widehat{G}$. 
	\end{rmk}

	\begin{exa}\label{exareg2}Let $G$ be a finite abelian group and $A$ a unital $C^{*}$-algebra. Suppose that $\widehat{G}$   has a twisted action $(\rho,\sigma)$ on $A$. Then the twisted crossed product $B:= A\rtimes^{\rho,\sigma}\widehat{G}$, which contains $A$ in the obvious way, is a rank-$1$ free noncommutative covering with deck transformation group $G$.	It is clear in fact that the fixed point algebra of $B$ under the (bi)dual action  $U_{x}( a_j{\delta}_j)=\overline{\langle j,x \rangle} a_j \delta_j$ of $G$ for $x \in G$ and $j\in \widehat{G}$ is precisely $A$. 
		Moreover the spectral subspaces are $B_k=A\delta_{-k} = \delta_{k}A$ for every $k\in \widehat{G}$ (of course every $\delta_k$ is unitary).
		\demo
	\end{exa}

	In general, the rank-1 regularity assumption is not always satisfied as shown by \cite[Example 1.7]{aiello2017spectral}. 
We give below another counterexample, showing in particular that the properties in Lemma \ref{right-generator} do not imply rank-$1$ regularity. 
This is, of course, a purely noncommutative phenomenon. 
Later we shall give also an example of a commutative covering with free action which is not rank-$1$ regular.
\begin{exa}\label{Toeplitz}
We consider here a free $\Z_2$-action on the Toeplitz algebra $\Tt$ such that the spectral subspace $B_-$ has a generator as a right $A$-module, but contains no invertible elements. Standard references for the basic properties of the Toeplitz algebra are \cite{MR1402012} and \cite{MR1865513}.
\\
Let us define the self-adjont unitary operator $u$ on the Hardy space $H^2$ that, on the orthonormal basis $z^n$, $n\geq0$, acts as $uz^n=(-1)^nz^n$, and denote by $\sg$ the action on $\Tt$ given by 
$\sg(b)=ubu$. This defines a  $\Z_2$-action on $\Tt$. Denoting by $T_f$ the Toeplitz operator associated with the function $f\in C(\bT,\C)$,
we get $\sg(T_z)=- T_z$ therefore, by known properties of Toeplitz operators, $\sg(T_{z^n})=(-1)^nT_{z^n}$ for any $n\in\Z$. 
Setting $(s(f))(z)=f(-z)$ for any function in $C(\bT,\C)$, we deduce 
$\sg(T_f)=T_{s(f)}$. Now if $b\in B_+$, $b=T_f+K$ with $K$ a compact operator, we have $b=\sg(b)=\sg(T_f)+uKu=T_{s(f)}+uKu$. By the uniqueness of the decomposition, we obtain $\sg(T_f)=T_f$. An analogous result holds for $b\in B_-$.
\\
Moreover, since $T_z$ is an isometry, any element in $b\in B_-$ can be written as $(bT_z^*)T_z$ with $bT_z^*\in B_+$, namely $T_z$ generates $B_-$ as a left $A$-module. Analogously, $T_z^*$ generates $B_-$ as a right $A$-module.
\\
We now prove that, 
for any Fredholm, Toeplitz operator $T_f\in B_+$, $\Ind(T_f)$ is even,
and, 
for any Fredholm, Toeplitz operator $T_f\in B_-$, $\Ind(T_f)$ is odd.
Indeed,  $\sg(T_f)=T_f$ implies $s(f)=f$. As a consequence, setting $\f(t)=f(e^{it})$, 
we have $\f(t+\pi)=\f(t)$, namely $\f$ follows the same path twice, hence $f$ has an even winding number and $T_f$ has an even index. 
If $\sg(T_f)=-T_f$ then $T_f=T_z^*T_{zf}$ with $s(zf)=zf$, hence $\Ind(T_f)=\Ind(T_z^*)+\Ind(T_{zf})$ is odd.
\\
Finally, if $b=T_f+K$ is invertible, it is Fredholm with index 0, hence $T_f$ has index zero, which imply that $b\not\in B_-$.
\\
As for the freeness of the action, we observe that, for any $b\in B_+$, $a=T_z^2b$ is in $B_+$, hence $p(T_z^*\otimes T_z^*a)=b$, that is $p$ is surjective
\demo
\end{exa}
	
	Let's examine the commutative case. We consider a compact space $X$ with right action $X \curvearrowleft G$ and quotient $Y=X/G$. The induced action on the functions corresponds to 
	\begin{displaymath}
		(\gamma_g f )(x)=f(x\cdot g),
	\end{displaymath}
	for $g\in G$ and $f\in C(X)$, and the spectral subspaces read as
	\begin{displaymath}
		C(X)_j=\{f \in C(X): \gamma_g(f)=\langle j, g \rangle f  \}, \quad j \in \widehat{G}.
	\end{displaymath}
	Recall that a covering of topological space is regular when its group of automorphisms acts transitively on every fiber. To help the comparison with the algebraic case, we shall call it \emph{topologically regular.}
	Assume that $X$ is Hausdorff, connected and locally path connected, then 
	the $G$ action is free if and only if $X \to Y$ is topologically regular with $G$ being its group of automorphisms. Indeed if the action is free, the Hausdorff assumption combined with the finiteness of $G$ implies that the action is a \emph{covering space action} and we may apply a standard result in basic topology (see for instance the Covering Space Quotient Theorem 12.14 in \cite{Lee2011Introduction}).
	
	In this case, i.e when $ X \to Y$ is topologically regular 
	we can identify $C(X)_j$ with the $C(Y)$-module of sections of a unitary complex line bundle $V_j \to Y$ associated to the character $j:G \to U(1)$. The total space of $V_j$ is the quotient space $(X \times \mathbb{C})/{G}$ with respect the the right diagonal action $(x,z) \cdot g:= (x \cdot g, \langle j, g \rangle \, z)$ and the canonical identification
	\begin{displaymath}
		C(X)_j \cong \Gamma(Y,V_j),
	\end{displaymath}
	associates to the equivariant function $f:X \to \C$ the section $F:Y\to V_j$ defined by $F([x])=[x,f(x)]$ for every $[x]\in Y$. The following facts are well known but it is interesting to specialize them to the rank-$1$ case. 
	\begin{prop}Let $X$ be compact Hausdorff, connected and locally path connected.
		Then the covering $C(Y)\subset C(X) \curvearrowleft G$ is rank-$1$ regular if and only if it is topologically regular and 
		all the		
		associated bundles $V_{j}$ are  trivial: $V_{j} \simeq Y \times \mathbb{C}$ (the isomorphism is not required to preserve the flat structure).
			
	\noindent 	If all the spaces are reasonably good (say $CW$-complexes) then the triviality of the associated bundles is equivalent to the vanishing of the first Chern class $[c_1(V_{j})] \in H^2(Y;\mathbb{Z})$.
	\end{prop}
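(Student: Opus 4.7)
The strategy is to combine the identification $C(X)_j\cong \Gamma(Y,V_j)$ with the characterization of rank-$1$ regularity in terms of $B_j$ being a free rank-$1$ $A$-module (condition $(3)$ of Theorem \ref{rank1thm}). A preliminary point is that rank-$1$ regularity forces freeness of the group action: this is built into condition $(5)$ of Theorem \ref{rank1thm}, and under the standing hypotheses on $X$ it is equivalent to the covering $X\to Y$ being topologically regular with deck group $G$, which is exactly what makes the associated bundle description $C(X)_j\cong \Gamma(Y,V_j)$ legitimate in the first place.

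For the forward implication, assume the covering is rank-$1$ regular. Then the action is free, hence $X\to Y$ is topologically regular, and by Theorem \ref{rank1thm}$(3)$ each $B_j=C(X)_j$ is free of rank $1$ as a right $A=C(Y)$-module. Transporting through the isomorphism $C(X)_j\cong\Gamma(Y,V_j)$, this says that $\Gamma(Y,V_j)$ is a free rank-$1$ $C(Y)$-module, which is equivalent (via the standard equivalence between line bundles and projective rank-$1$ modules of sections) to the existence of a nowhere-vanishing global section of $V_j$, i.e.\ to the triviality of $V_j$.

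For the converse, assume the covering is topologically regular and every $V_j$ is trivial. Topological regularity provides freeness of the $G$-action and the identification $C(X)_j\cong\Gamma(Y,V_j)$ of $C(Y)$-modules. The triviality $V_j\cong Y\times\C$ yields $\Gamma(Y,V_j)\cong C(Y)$ as right $C(Y)$-modules, so each $B_j$ is free of rank $1$ over $A$. Invoking Theorem \ref{rank1thm}$(3)$ we conclude that the covering is rank-$1$ regular. The main cohomological statement then reduces to a standard fact: on a CW-complex $Y$ the first Chern class induces a bijection between isomorphism classes of complex line bundles and $H^2(Y;\Z)$ (via the classifying space $BU(1)\simeq \C P^\infty\simeq K(\Z,2)$), so $V_j$ is trivial if and only if $[c_1(V_j)]=0$.

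The argument is essentially bookkeeping once Theorem \ref{rank1thm} is in hand; the only mildly delicate point is making sure that the identification $C(X)_j\cong\Gamma(Y,V_j)$ is invoked only after freeness of the action has been established, so that the associated bundle $V_j=(X\times\C)/G$ is well-defined as a locally trivial line bundle over $Y$. This is precisely where the hypotheses that $X$ be Hausdorff, connected, and locally path-connected (so that free $G$-actions yield genuine covering spaces) enter the proof.
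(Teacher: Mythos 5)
Your proof is correct, but it reaches the key implication by a different route than the paper. For ``rank-$1$ regular $\Rightarrow$ topologically regular'' you argue abstractly: condition $(5)$ of Theorem \ref{rank1thm} gives freeness of the action in the Rieffel sense, the previously quoted theorem of Rieffel converts this into set-theoretic freeness of the action on $X$, and the covering space quotient theorem then yields topological regularity. The paper instead argues directly and more elementarily: if $x\cdot g=x$, evaluating any invertible $\mu_j\in C(X)_j$ at $x$ gives $\langle j,g\rangle\,\mu_j(x)=\mu_j(x)$ with $\mu_j(x)\neq 0$, hence $\langle j,g\rangle=1$ for all $j\in\widehat{G}$ and so $g=e$; this avoids relying on the Rieffel equivalence, which the paper only cites. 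For the bundle-triviality equivalence the two arguments differ only in which condition of Theorem \ref{rank1thm} is used: the paper observes that an invertible element of $C(X)_j$ is exactly a nowhere-vanishing equivariant function, i.e.\ a trivializing section of $V_j$ (conditions $(1)$--$(2)$), while you phrase it as $\Gamma(Y,V_j)$ being free of rank one (condition $(3)$) together with the Serre--Swan correspondence; both are valid, the paper's being more self-contained and yours making the module-theoretic content explicit. Your caveat that the identification $C(X)_j\cong\Gamma(Y,V_j)$ must only be invoked after topological regularity is in hand is exactly the point the paper also stresses in its proof of the converse. The Chern class statement is handled the same way in both (classification of line bundles by $H^2(Y;\mathbb{Z})$, for which the paper simply gives a reference).
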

	\begin{proof}
		Assume first that we have a rank-$1$ regular covering with invertibles $\mu_{j}$, $j=1,...,n$. To prove that the covering is topologically regular we just have to prove that the action is free in the topological sense. This is immediate because $x \cdot g=x$ implies 
		$$\langle j,g\rangle  \mu_j(x)=(\gamma_g\mu_j)(x)=\mu_j( x\cdot g)=\mu_j(x)$$ for every $j$. This is impossible unless $g=e.$ The viceversa is clear once we observe that the topologically regular assumption makes the construction of the vector bundles $V_j$ possible. Any invertible $\mu_j$ corresponds to a trivialization. About Chern classes we refer to \cite{MR1249482}.
	\end{proof}
	
	Notice that if $X$ and $Y$ are manifolds, by Chern-Weil theory since our bundles are flat, the real Chern classes $[c_1(V_j)]\in H^2(Y;\mathbb{R})$ vanish for every $j \in \widehat{G}$. In particular the integer classes $[c_1(V_j)] \in H^2(Y;\mathbb{Z})$ are torsion classes. It follows that  if  $H^2(Y;\mathbb{Z})$ is torsion free then 
	the regularity assumption \eqref{unitaries} is satisfied. In general this is not the case as shown by the following example from \cite{kamber1967flat}.
	\begin{exa}
		Let $\mathbb{Z}_2$ act on the 2-sphere as the universal covering of the projective space $\mathbb{P}^2(\mathbb{R})$. Then the character $j: \mathbb{Z}_2 \rightarrow SO(2)$ mapping the generator to the antipodal produces a flat $SO(2)$-bundle $V_{j}$ which is non trivial because its Euler class generates $H^*(\mathbb{P}^2(\mathbb{R});\mathbb{Z})$.
		Notice that
		under the isomorphism $SO(2) \cong U(1)$ we see that $V_j$ has a natural complex structure. In this way the first Chern class corresponds to the Euler class. \demo
	\end{exa}
	
	In example \ref{exareg2} we have shown that $A\subseteq A\rtimes^{\rho,\sigma}\widehat{G}$ is a rank-$1$ regular finite noncommutative covering. The converse holds true: any finite rank-1 regular covering is a twisted crossed product. This was proved by Wagner in \cite{wagner2015noncommutative}, Appendix A, with a slightly different terminology.
	
\begin{thm}\label{isocrossed}
		Let $G \curvearrowright B \supset A$	be a finite rank-1 regular noncommutative covering with $G$ abelian. Any frame $\mu:\widehat{G} \longmapsto U(B)$ determines a twisting pair $(\rho,\sigma)$ by
		\begin{displaymath}
			{\rho}_j(a)={\mu}_ja{\mu}_j^*, \quad \textrm{and}\quad   \sigma_{j,k}={\mu_j}\mu_k\,\mu_{j+k}^*, \quad j,k \in \widehat{G}
		\end{displaymath}
		together with an isomorphism $\Phi:  A\rtimes^{\rho,\sigma} \widehat{G}\rightarrow B$  which on the generators is given by 
		\begin{displaymath}
			\Phi(a_j\delta_j)=a_j\mu_j.
		\end{displaymath}
		This isomorphism is equivariant with respect to the natural (dual) action of 
		$G$ on $A \rtimes^{\rho,\sigma} \widehat{G}$.
\end{thm}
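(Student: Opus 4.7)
The plan is to first verify that $(\rho,\sigma)$ is a twisting pair in the sense of Definition~\ref{covrep}, then realize $\Phi$ as the integrated form of a natural covariant couple via the universal property of $A\rtimes^{\rho,\sigma}\widehat{G}$, and finally deduce bijectivity together with equivariance from the spectral-subspace decomposition of both algebras under the appropriate $G$-action. For the first step, $\rho_j(a)=\mu_j a\mu_j^*$ is an automorphism of $A$ thanks to Remark~\ref{regcov}(iii) and the unitarity of $\mu_j$, while $\sigma_{j,k}=\mu_j\mu_k\mu_{j+k}^*$ belongs to $B_{j+k}B_{-(j+k)}\subset B_0=A$ by Proposition~\ref{prop-1.1}(1)--(2) and is manifestly unitary. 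The three axioms of Definition~\ref{covrep} then reduce to telescoping identities: both $\rho_x(\sigma_{y,z})$ and $\sigma_{x,y}\sigma_{x+y,z}\sigma_{x,y+z}^*$ collapse to $\mu_x\mu_y\mu_z\mu_{y+z}^*\mu_x^*$; the law $\rho_x\circ\rho_y=\operatorname{Ad}(\sigma_{x,y})\circ\rho_{xy}$ is an immediate substitution; and the normalization $\sigma_{x,e}=\sigma_{e,x}=1$, $\rho_e=\operatorname{id}_A$ follows from $\mu_e=I$.

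For the construction of $\Phi$, I would check that the inclusion $\iota_A\colon A\hookrightarrow B$ together with $j\mapsto\mu_j$ defines a twisted covariant couple for $(A,\widehat{G},\rho,\sigma)$: the relation $\mu_j\mu_k=\iota_A(\sigma_{j,k})\mu_{j+k}$ is the very definition of $\sigma$, and $\iota_A\circ\rho_j=\operatorname{Ad}(\mu_j)\circ\iota_A$ is the very definition of $\rho$. Representing $B$ faithfully on a Hilbert space and applying the universal property of $A\rtimes^{\rho,\sigma}\widehat{G}$ then yields a unital $*$-homomorphism $\Phi$ with $\Phi(a_j\delta_j)=a_j\mu_j$. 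Surjectivity is immediate, since $\operatorname{Im}\Phi$ contains $A$ and all $\mu_j$, hence each $A\mu_j=B_j$ (as $\mu_j$ generates $B_j$ as a left $A$-module by Theorem~\ref{rank1thm}), and $B=\sum_j B_j$ by Proposition~\ref{prop-1.1}(3). For injectivity and equivariance I would use that both algebras carry a natural $G$-action: the dual action $U_g(a_j\delta_j)=\overline{\langle j,g\rangle}\,a_j\delta_j$ on the crossed product (as in Example~\ref{exareg2}) and the given $\gamma$ on $B$. A one-line computation gives $\gamma_g\circ\Phi=\Phi\circ U_g$ on generators, which is the claimed equivariance. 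Since $G$ is finite abelian, the spectral projectors on each side produce an internal direct-sum decomposition into spectral subspaces, and under $\Phi$ the $j$-th spectral subspace of the crossed product, namely $A\delta_j$, is sent bijectively to $A\mu_j=B_j$ (the map $a\delta_j\mapsto a\mu_j$ is bijective since $\mu_j$ is invertible). Summing over $j\in\widehat{G}$ delivers global bijectivity.

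The step I expect to require the most care is the injectivity argument: it hinges on knowing that the dual $G$-action is genuinely well defined at the $C^*$-level on the twisted crossed product (not merely on its $L^1$ subalgebra) and that its spectral projectors yield an internal Banach-space direct-sum decomposition $A\rtimes^{\rho,\sigma}\widehat{G}=\bigoplus_{j\in\widehat{G}}A\delta_j$. Both facts are standard for finite abelian $G$, but they deserve to be spelled out explicitly since the whole injectivity argument rests on them.
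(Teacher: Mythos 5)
Your proposal is correct and follows essentially the same route as the paper: define $\Phi$ on generators by $a_j\delta_j\mapsto a_j\mu_j$, verify the twisting-pair axioms and the $*$-homomorphism property from the defining formulas for $\rho$ and $\sigma$, and obtain bijectivity from the spectral decomposition $B=\bigoplus_k B_k=\bigoplus_k A\mu_k$ (the paper writes the inverse directly as $b=\sum_k b_k\mu_k\mapsto\sum_k b_k\delta_k$, which is the same mechanism as your spectral-subspace argument). The only cosmetic difference is that you invoke the universal property of the full crossed product to produce $\Phi$, whereas the paper checks multiplicativity and involutivity by hand on $\delta$-functions; both are legitimate, and your closing concern about completions is dispatched exactly as the paper does, by finiteness of $\widehat{G}$.
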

	
\begin{proof}
		It is straightforward to check that $(\rho,\sigma)$ is a twisting pair. Moreover 
		since the group is finite we don't have to worry about completions and the maximal reduced crossed product coincides with the reduced one. Let's check that $\Phi$ is a morphism of $C^*$-algebras. 		
		Then it will be invertible thanks to the rank-$1$ regular property with $\Phi^{-1}(b)$ given by writing $b=\sum_{k\in \widehat{G}}b_k \mu_k$.
		 We have
		\begin{displaymath}
			\begin{split}
				\Phi(a_j\delta_j \star b_k\delta_k)&=\Phi(a_j\rho_{j}(b_{k})\sigma_{j,k}\delta_{j+k})= a_{j}(\mu_{j}b_{k}\mu_{j}^{*})({\mu_j}\mu_k\,\mu_{j+k}^*)\mu_{j+k} =\Phi(a_j\mu_j)\Phi(b_k\mu_k)
			\end{split}
		\end{displaymath}
		and 
		\begin{displaymath}
			\Phi((a_j\delta_j)^*)=\Phi(\sigma_{-j, j}^{*}\rho_{-j}(a_{j}^{*})\delta_{-j})= \mu_{e}\mu_{j}^{*}\mu_{-j}^{*}(\mu_{-j}a_{j}^{*}\mu_{-j}^{*})\mu_{-j} = \mu_j^{*} a_j^*=\Phi(a_j\delta_j)^*.
		\end{displaymath} 
Concerning the statement about the equivariance, this follows from
$$\Phi(\gamma_g(a_j\delta_j))=\Phi(\langle j,g\rangle a_j\delta_j)=\langle j,g\rangle a_j\mu_j=\gamma_g(a_j\mu_j)=\gamma_g(\Phi(a_j\delta_j)).$$
Here we denote with $\gamma_g$ both the actions.
\end{proof}

\begin{rmk}
Following \cite{busby1970representations}, let $B(\widehat{G},A)$ be the group of all the maps $p:\widehat{G} \rightarrow U(A)$ satisfying $p(e)=1$. 
There is a natural action of $B(\widehat{G},A)$ on the set of  twisting pairs,  where $p \in B(\widehat{G},A)$ acts on $(\rho,\sigma)$  by $p \cdot (\rho,\sigma):=(\rho^p,\sigma^p)$, with 
$$
\rho^p_j=\operatorname{Ad}_{p(j)}\circ \rho_j, \quad \textrm{and} \quad \sigma_{j,k}^p=p(j)\rho_x(p(k))\sigma_{j,k}p(j+k)^*,
$$
for $j,k \in G$.
The isomorphism class of $A \rtimes^{\rho,\sigma} \, \widehat{G}$ only depends on the orbit of the twisting pair under the above $B(\widehat{G},A)$-action.
We see that any two frames give rise to twisting pairs {\em{in the same cohomology class}} i.e. in the same orbit. Indeed, if $\mu,\mu^{\prime}: \widehat{G} \rightarrow B$ are frames with corresponding twisting pairs $(\rho,\sigma)$ and $(\rho',\sigma')$, then putting $p(j):=\mu^{\prime*}_{j}\mu_j$ for $j\in \widehat{G}$, we find $(\rho',\sigma')=p \cdot (\rho,\sigma)$.
\end{rmk}

\section{A covering of the quantum $2$-torus}
	
Let $\theta$ be an irrational number so that the (skew adjoint) matrix
$
\Theta:=
\begin{pmatrix}
	0 &  \theta \\
	-\theta  & 0
\end{pmatrix}
$ 
defines the skew bicharacter
$$
\sigma_{\Theta}:(x,y)\in\mathbb{Z}^2 \times \mathbb{Z}^2 \longrightarrow \exp(-\pi i\langle x,\Theta y\rangle ) = e^{i\pi\theta(x_2y_1-x_1y_2)} \in U(1).
$$
This is a cocycle for a twisted action $(\operatorname{Id},\sigma)$ on $\mathbb{C}$ giving rise to the quantum $2$ torus
$$
\mathbb{A}^2_{\theta}=\mathbb{C}\rtimes^{\sigma}\mathbb{Z}^2.
$$
In particular this is the universal $C^*$ algebra generated by unitaries $U$ and $V$ satisfying the commutation relation $VU=e^{2\pi i \theta} UV$. We set $\zcW_x := e^{\pi i\theta x_1x_2} U^{x_1}V^{x_2}$, for any $x\in \mathbb{Z}^2$, so that, for any $y\in\bz^2$, we get
\begin{align*}
\zcW_x^* & = \zcW_{-x}, \\
\zcW_x \zcW_y & = \sigma_{\Theta}(x,y) \zcW_{x+y}.
\end{align*}

We denote by $B$ the C$^*$-algebra generated by $\{ \zcW_x : x\in\bz^2 \}$, and recall the construction of its unique trace. Let $\cs(\bz^2) := \{ \a\in\ell^\infty(\bz^2) : \sup_{r\in\bz^2} (1+r_1^2+r_2^2)^k |\a_r|^2 < \infty, \forall k\in\bn \}$, and $\ct_B := \{ \sum_{r\in\bz^2} \a_r \zcW_r : \{\a_r\}_{r\in\bz^2} \in\cs(\bz^2) \}$, which is a dense $*$-subalgebra of $B$ (called the smooth noncommutative torus), and set, for any $\sum_{r\in\bz^2} \a_r \zcW_r \in\ct_B$, $\t_B(\sum_{r\in\bz^2} \a_r \zcW_r) :=\a_{00}$. This extends to a tracial state on $B$. 

We denote with $L^2(B,\t_B)$ the Hilbert space of the GNS representation of $\t_B$, and with $\iota_B : B \hookrightarrow L^2(B,\t_B)$ the canonical immersion.

Then $\{ \iota_B(\zcW_x) : x\in\bz^2\}$ is an orthonormal basis of $L^2(B,\t_B)$, and, 
for any $\a\in\cs(\bz^2)$, we get $\| \sum_{r\in\bz^2} \a_r \iota_B(\zcW_r) \|_{L^2(B,\t_B)}^2 = \sum_{r\in\bz^2} |\a_r|^2$.

\np Let us introduce a finite abelian group of automorphisms of $B$. Let $M\in M_2(\bz)$, $\det M>1$, and set $\widehat{M} := (M^{-1})^T$. For any $t,x\in\bz^2$, set $\widetilde{\g}_t(\zcW_x) := e^{2\pi i \langle M^{-1}t, x \rangle} \zcW_x = e^{2\pi i \langle t, \widehat{M}x \rangle} \zcW_x$, which extends to an automorphism of $B$. If $t=Mt'$, $t'\in\bz^2$, then $\widetilde{\g}_{Mt'}(\zcW_x) = e^{2\pi i\langle t',x\rangle }\zcW_x = \zcW_x$, so that $\widetilde{\g}$ descends to an action of $G := \bz^2/M\bz^2$, given by $\g_g := \widetilde{\g}_t$, $\forall t\in\bz^2$ such that $t+M\bz^2 = g$. 

\np Since the dual group is $\widehat{G} \cong \widehat{M}\bz^2/\bz^2$, with duality given by $\langle \widehat{g}, g \rangle :=  \exp(2\pi i \langle t,\widehat{M} s \rangle)$, $\forall s,t\in\bz^2$ such that $t+M\bz^2 = g$, and $\widehat{M}s+\bz^2 = \widehat{g}$, we obtain the spectral subspaces of $\g$, which are 
$$
B_{\widehat{g}} := \{ b\in B : \g_{g}(b) = \langle \widehat{g},g \rangle b, \forall g\in G \} = \overline{\mathrm{span}}\{ \zcW_{s+M^Tn} : n\in\bz^2\}.
$$ 
In particular, $A:= B^\g$ is the C$^*$-algebra generated by $\{  \zcW_{M^Tn} : n\in\bz^2\}$.
		
\np We now choose, once and for all, a set $\widehat{\Gamma}$ in $\bz^2$, ``representing'' the elements of $\widehat{G}$. 
Let us consider the short exact sequence of groups
\begin{align}
& 0\longrightarrow \bz^2\longrightarrow \widehat{M}\bz^2 \longrightarrow \widehat{G} \longrightarrow 0. \label{seq0}
\end{align}
Such central extension $\widehat{M}\bz^2$ of $\widehat{G}$ via $\bz^2$ can be described either with a section $\zeta:\widehat{G}\to \widehat{M} \bz^2$ or via a $\bz^2$-valued 2-cocycle $\omega(\widehat{g},\widehat{h}) = \zeta(\widehat{g})+ \zeta(\widehat{h})- \zeta(\widehat{g}+\widehat{h})$. We choose the unique section such that, for any $\widehat{g} \in \widehat{G}$, $\zeta(\widehat{g})\in [0,1)^2$, and set $s(\widehat{g}) := M^T\zeta(\widehat{g}) \in \bz^2$, and $\widehat{\Gamma} := \{ s(\widehat{g}) :  \widehat{g} \in \widehat{G} \}$. 
Then $u(\widehat{g},\widehat{h}) := s(\widehat{g})+s(\widehat{h})-s(\widehat{g}+\widehat{h})\in M^T\bz^2$.
We set $\mu_{ \widehat{g} } := \zcW_{ s(\widehat{g}) }$, for all $\widehat{g} \in \widehat{G}$, so that $\mu_{ \widehat{g} } \in B_{ \widehat{g} }$. That is, $\mu$ is a frame for $G \curvearrowright B \supset A$. There results a twisting pair $(\rho,\sigma)$ 
and an isomorphism 
$A\rtimes^{\rho,\sigma}_r \widehat{G} \cong B$.

\subsection{A triple on the twisted crossed product}
Recall the canonical spectral triples on $A$ and $B$,

\begin{defn} \label{Cliff5.05}
We consider the closable derivations $\delta^B_j$ on $\iota_B(\ct_B) \subset L^2(B,\t_B)$, $j=1,2$ verifying 
$\delta^B_j \big(  \iota_B(\zcW_r) \big) = 2\pi i  r_j \, \iota_B(\zcW_r)$, and denote by $\delta^A_j$ the restrictions to $\iota_A(\ct_A) \subset L^2(A,\t_A)$.
%\begin{align*}
%\delta^B_j \big( \sum_{r\in \bz^2} \a_r\, \iota_B(\zcW_r) \big) & := 2\pi i \sum_{r\in \bz^2} r_j \a_r\,  \iota_B(\zcW_r), \\ 
%\delta^A_j \big( \sum_{r\in M^T\bz^2} \a_r\,  \iota_A(\zcW_r) \big) & := 2\pi i \sum_{r\in M^T\bz^2} r_j \a_r\,  \iota_A(\zcW_r).
%\end{align*}
Introduce the Hilbert spaces 
$$
\ch_B := L^2(B,\t_B) \oplus L^2(B,\t_B), \qquad \ch_A := L^2(A,\t_A) \oplus L^2(A,\t_A),
$$
the representations 
\begin{align*}
\pi_B : b\in B \mapsto 
\begin{pmatrix}
\pi_{\t_B}(b) & 0 \\
0 & \pi_{\t_B}(b)
\end{pmatrix} \in \cb(\ch_B),
\qquad 
\pi_A : a\in A \mapsto 
\begin{pmatrix}
\pi_{\t_A}(a) & 0 \\
0 & \pi_{\t_A}(a)
\end{pmatrix} \in \cb(\ch_A),
\end{align*}
and the closed selfadjoint operators $D_A$, resp. $D_B$ which, on $\iota_A(\ct_A) \oplus \iota_A(\ct_A) \subset \ch_A$, resp. $\iota_B(\ct_B) \oplus \iota_B(\ct_B) \subset \ch_B$, are given by
\begin{equation*}
D_A  = -i 
\begin{pmatrix}
0 & \delta^A_1 -i \delta^A_2 \\
\delta^A_1 +i \delta^A_2 & 0
\end{pmatrix},
\qquad
D_B  = -i 
\begin{pmatrix}
0 & \delta^B_1 -i \delta^B_2 \\
\delta^B_1 +i \delta^B_2 & 0
\end{pmatrix},
\end{equation*}
which are the Dirac operators on $A$ and $B$, respectively, and the gradings
$$
\G_B := \begin{pmatrix}
I_{L^2(B,\t_B)} & 0 \\
0 & -I_{L^2(B,\t_B)}
\end{pmatrix} \in \cb(\ch_B),
\qquad
\G_A := \begin{pmatrix}
I_{L^2(A,\t_A)} & 0 \\
0 & -I_{L^2(A,\t_A)}
\end{pmatrix} \in \cb(\ch_A).
$$ 
Then $(\ct_B,\pi_B,\ch_B,D_B,\G_B)$ and $(\ct_A,\pi_A,\ch_A,D_A,\G_A)$ are even spectral triples on $B$ and $A$, respectively.
\end{defn}

%%%%%%%%%%%%%%%%%%%%%%%%%%%%%%%%%%

We take a matrix-valued length function on $\widehat{G}$ as follows. Consider the map
$$
\eps : (x_1,x_2)\in\br^2 \mapsto
\begin{pmatrix}
0 & x_1-ix_2 \\
x_1+ix_2 & 0
\end{pmatrix} \in M_2(\bc),
$$
which satisfies $\eps(x_1,x_2)^2 = (x_1^2+x_2^2)I_2$. Then a Clifford length function is defined as $\vecLength : \widehat{g} \in \widehat{G} \mapsto \eps(\s(\widehat{g})) \in M_2(\br)$, and we get a multiplication operator $M_{\vecLength} : \delta_{\widehat{g}} \otimes v \in \ell^2(\widehat{G})\otimes\bc^2 \mapsto \delta_{\widehat{g}} \otimes \vecLength(\widehat{g}) v \in \ell^2(\widehat{G})\otimes\bc^2$. Therefore we have a spectral triple $(\bc(\widehat{G}), \ell^2(\widehat{G})\otimes\bc^2, M_{\vecLength})$ on $C^*_r(\widehat{G})$.
In the description of the associated spectral triple on the twisted crossed product is useful to introduce the functions:
$\nu: \widehat{G}\times \widehat{G}\times \mathbb{Z}^2 \longrightarrow \mathbb{R}$ defined by
$$ \quad \nu(\widehat{j},\widehat{g},x)=\big{\langle} s(-\widehat{j}-\widehat{g}),\Theta\big{(}M^Tx+s(\widehat{j})-s(-\widehat{g})\big{)} \big{\rangle} + \big{\langle} M^Tx, \Theta\big{(}s(\widehat{j})-s(-\widehat{g})\big{)}\big{\rangle} - \big{\langle} s(\widehat{j}),\Theta s(-\widehat{g})\big{\rangle},$$
and $\mathcal{Y}:\widehat{G}\times \widehat{G}\longrightarrow \mathbb{Z}^2$ with $\mathcal{Y}(\widehat{j},\widehat{g})=\widehat{M}\big{(}s(-\widehat{j}-\widehat{g})+s(\widehat{j})-s(-\widehat{g})\big{)}.$

\begin{prop} \label{Cliff5.08.010}
The odd spectral triple on $A \rtimes^{\r,\s}_r \widehat{G} \cong B$, coming, as in Theorem \ref{spectraltripleconstruction}, from the spectral triples $(\ct_A,\pi_A,\ch_A,D_A,\G_A)$ on $A$, and $(\bc(\widehat{G}), \ell^2(\widehat{G})\otimes\bc^2, M_{\vecLength})$ on $C^*_r(\widehat{G})$, is  $(C(\widehat{G},\ct_A),\Pi,\ch,\widetilde{D})$, where:
\begin{itemize}
\item $C(\widehat{G},\ct_A) \cong \ct_B$ is the smooth subalgebra,

\item $\ch := \big( L^2(A,\t_A) \otimes \ell^2(\widehat{G}) \otimes \bc^2 \big) \oplus \big( L^2(A,\t_A) \otimes \ell^2(\widehat{G}) \otimes \bc^2 \big)$ is the Hilbert space
with the representation given by
\begin{align*}
\Pi(\zcW_{M^Tx} & \otimes \delta_{\,\widehat{j}}) 
\begin{pmatrix}
\iota_A( \zcW_m) \otimes \delta_{\,\widehat{g}} \otimes v \\
\iota_A( \zcW_n) \otimes \delta_{\,\widehat{h}} \otimes w
\end{pmatrix} \\
& =
\begin{pmatrix}
e^{-\pi i \nu(\widehat{j},\widehat{g},x)} \s_\Theta \big( M^T(x+\mathcal{Y}(\widehat{j},\widehat{g})),m \big)\,\iota_A\big{(}  \zcW_{m+M^T(x+\mathcal{Y}(\widehat{j},\widehat{g}))}\big{)} \otimes \delta_{\, \widehat{j}+\widehat{g} } \otimes v \\
e^{-\pi i \nu(\widehat{j},\widehat{h},x)} \s_\Theta \big( M^T(x+\mathcal{Y}(\widehat{j},\widehat{h})),n \big)\,\iota_A\big{(}  \zcW_{n+M^T(x+\mathcal{Y}(\widehat{j},\widehat{h}))}\big{)} \otimes \delta_{\,\widehat{j}+\widehat{h}} \otimes w
\end{pmatrix},
\end{align*}
where $m,n\in M^T\bz^2$, 
$\widehat{j},\widehat{g},\widehat{h}\in \widehat{G}$ and $x\in \mathbb{Z}^2$. 
\item Finally the Dirac operator is uniquely defined by the formula:
%
%\item 
\begin{align*}
\widetilde{D}  %& 
\begin{pmatrix}
\iota_A( \zcW_m) \otimes \delta_{\,\widehat{g}} \otimes v \\
\iota_A( \zcW_n) \otimes \delta_{\,\widehat{h}} \otimes w
\end{pmatrix} 
%\\ & = 
%\begin{pmatrix}
%	I_{L^2(A,\t_A)} \otimes M_{\vecLength} &  (-i \delta^A_1 -\delta^A_2) \otimes I_{\ell^2({G})} \otimes I_V \\
%	(-i \delta^A_1 +\delta^A_2) \otimes I_{\ell^2({G})} \otimes I_V  & -I_{L^2(A,\t_A)} \otimes  M_{\vecLength}
%\end{pmatrix}
%\begin{pmatrix}
%\iota_A( \zcW_m) \otimes \delta_{\widehat{g}} \otimes v \\
%\iota_A( \zcW_n) \otimes \delta_{\widehat{h}} \otimes w
%\end{pmatrix}
%\\ & 
=  
\begin{pmatrix}
	\iota_A( \zcW_m) \otimes \delta_{\,\widehat{g}} \otimes \vecLength(\widehat{g}) v + \iota_A( (-in_1-n_2)  \zcW_n)  \otimes \delta_{\,\widehat{h}} \otimes w \\
	\iota_A((-im_1+m_2)  \zcW_m)  \otimes \delta_{\,\widehat{g}} \otimes v - \iota_A( \zcW_n) \otimes \delta_{\,\widehat{h}} \otimes \vecLength(\widehat{h}) w
\end{pmatrix}.
\end{align*}
\end{itemize}
\end{prop}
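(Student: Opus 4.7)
The proposition is a direct application of the general construction of Theorem \ref{spectraltripleconstruction} in the ``even to odd'' case to the twisted dynamical system $(A,\widehat{G},\rho,\sigma)$ coming from the frame $\mu_{\widehat{g}}=\zcW_{s(\widehat{g})}$ (Theorem \ref{isocrossed}), the even spectral triple $(\ct_A,\pi_A,\ch_A,D_A,\G_A)$ on $A$, and the Clifford length function $\vecLength$ on $\widehat{G}$. What remains to be done is to unfold every piece of data concretely on the basis $\{\iota_A(\zcW_m)\otimes\delta_{\widehat{g}}\otimes v\}$ in order to match the formulas in the statement. The smooth subalgebra is $C(\widehat{G},\ct_A)$ by construction, and its identification with $\ct_B$ follows from the isomorphism $A\rtimes^{\rho,\sigma}_r\widehat{G}\cong B$ together with $s(\widehat{\Gamma})$ being a complete set of coset representatives for $M^T\Z^2$ in $\Z^2$.

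For the representation, the plan is to apply formula \eqref{left} to $a\delta_{\widehat{j}}=\zcW_{M^Tx}\delta_{\widehat{j}}$, recalling that the inverse in $\widehat{G}$ is additive, so that one must compute $\rho_{-\widehat{j}-\widehat{g}}(\zcW_{M^Tx})\,\sigma_{-\widehat{j}-\widehat{g},\widehat{j}}$. By the definition of the twisting pair associated with the frame, $\rho_{\widehat{k}}(\zcW_y)=\zcW_{s(\widehat{k})}\zcW_y\zcW_{s(\widehat{k})}^*$ and $\sigma_{\widehat{k},\widehat{j}}=\zcW_{s(\widehat{k})}\zcW_{s(\widehat{j})}\zcW_{s(\widehat{k}+\widehat{j})}^*$. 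Using the fundamental rule $\zcW_x\zcW_y=\s_\Theta(x,y)\zcW_{x+y}$ together with $\zcW_x^*=\zcW_{-x}$ and the skew-symmetry of $\Theta$ (which kills the diagonal terms $\s_\Theta(z,-z)=1$), one shows that the $\zcW$-part of $\rho_{-\widehat{j}-\widehat{g}}(\zcW_{M^Tx})\sigma_{-\widehat{j}-\widehat{g},\widehat{j}}$ collapses to $\zcW_{M^T x}\zcW_{M^T\mathcal{Y}(\widehat{j},\widehat{g})}$ (this is where the identity $M^T\mathcal{Y}(\widehat{j},\widehat{g})=s(-\widehat{j}-\widehat{g})+s(\widehat{j})-s(-\widehat{g})\in M^T\Z^2$ plays its role), and is therefore equal to $\s_\Theta(M^Tx,M^T\mathcal{Y}(\widehat{j},\widehat{g}))\,\zcW_{M^T(x+\mathcal{Y}(\widehat{j},\widehat{g}))}$. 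One then pre-composes by $\pi_{\tau_A}$ applied to $\iota_A(\zcW_m)$, which gives the further phase $\s_\Theta(M^T(x+\mathcal{Y}(\widehat{j},\widehat{g})),m)$ appearing in the statement; gathering all the remaining exponents into one expression and applying skew-symmetry of $\Theta$ systematically, the sum takes exactly the form $-\pi\,\nu(\widehat{j},\widehat{g},x)$. The representation is then doubled on the Hilbert space $\ch$ as prescribed by the even-to-odd construction.

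For the Dirac operator, the formula from Theorem \ref{spectraltripleconstruction} in the even-to-odd case gives directly the block structure
\[
\widetilde D=\begin{pmatrix} I_{H^+}\otimes M_{\vecLength} & D^-\otimes I\otimes I\\ D^+\otimes I\otimes I & -I_{H^-}\otimes M_{\vecLength}\end{pmatrix},
\]
where $D^\pm$ are the off-diagonal blocks of $D_A$. Since $D^-=-i(\delta^A_1-i\delta^A_2)$ and $\delta^A_j\iota_A(\zcW_n)$ is proportional to $n_j\iota_A(\zcW_n)$ by Definition \ref{Cliff5.05}, the off-diagonal blocks act on $\iota_A(\zcW_n)\otimes\delta_{\widehat{h}}\otimes w$ by a scalar proportional to $(-in_1-n_2)$, with the analogous formula $(-im_1+m_2)$ for $D^+$. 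Combined with $M_{\vecLength}(\delta_{\widehat{g}}\otimes v)=\delta_{\widehat{g}}\otimes\vecLength(\widehat{g})v$, this reproduces the stated expression for $\widetilde D$ on the chosen total basis.

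The main obstacle will be the phase bookkeeping for $\Pi$, i.e.\ verifying that the collection of factors coming from $\rho$, from $\sigma$, and from the final $\zcW$-multiplication assembles precisely into $e^{-i\pi\nu(\widehat{j},\widehat{g},x)}\s_\Theta(M^T(x+\mathcal{Y}(\widehat{j},\widehat{g})),m)$. The conceptual input is minimal (only bilinearity and skew-symmetry of $\langle\cdot,\Theta\cdot\rangle$), but the manipulation is long; the key cancellations are $\s_\Theta(z,-z)=1$, $\s_\Theta(a,b)\s_\Theta(b,-a)=\s_\Theta(a,b)^2$, and the regrouping $\langle s(-\widehat{j}-\widehat{g}),\Theta(M^Tx+s(\widehat{j})-s(-\widehat{g}))\rangle+\langle M^Tx,\Theta(s(\widehat{j})-s(-\widehat{g}))\rangle-\langle s(\widehat{j}),\Theta s(-\widehat{g})\rangle$ that defines $\nu$.
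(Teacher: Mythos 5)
Your proposal is correct and follows exactly the route the paper takes: its proof of this proposition is the single line ``It follows from Theorem \ref{spectraltripleconstruction} $(1b)$, and computations with the commutation relations,'' and you have simply carried out those computations explicitly (the unfolding of $\rho_{-\widehat{j}-\widehat{g}}(\zcW_{M^Tx})\sigma_{-\widehat{j}-\widehat{g},\widehat{j}}$ via the frame, the cancellation $\s_\Theta(z,-z)=1$, the identification of the resulting exponent with $-\pi\nu(\widehat{j},\widehat{g},x)$, and the action of $D^{\pm}$ on the basis all check out). Your cautious phrasing ``proportional to $(-in_1-n_2)$'' is in fact wise, since a literal application of Definition \ref{Cliff5.05} produces an extra factor of $2\pi i$ that the statement's displayed formula for $\widetilde D$ suppresses.
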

\begin{proof}
It follows from Theorem \ref{spectraltripleconstruction} $(1b)$, and computations with the commutation relations.
\end{proof}

%%%%%%%%%%%%%%%%%%%%%%%%%%%%%%%%%%

\subsection{A triple on the twisted crossed product. $\widehat{G}$-equivariant case}

%An example of an  $\widehat{G}$-equivariant spectral triple on $A$ is given below.

\begin{prop} \label{}
Let $H:= \ch_B$, $\pi := \pi_B|_A$, and $U(\widehat{g}) := \pi_B(\mu_{\widehat{g}}) = \pi_B(\zcW_{s(\widehat{g})}) \in U(\ch_B)$. Then $(\ct_A,\pi,H,D_B)$ is a $\widehat{G}$-equivariant spectral triple on $A$.
\end{prop}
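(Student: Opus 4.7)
My plan is to verify the three conditions in the definition of a twisted $\widehat{G}$-equivariant spectral triple, exploiting the fact that the essentially all structure is inherited from the spectral triple $(\ct_B,\pi_B,\ch_B,D_B,\G_B)$ on $B$ and from the frame $\mu_{\widehat{g}} = \zcW_{s(\widehat{g})}$ already constructed in Section~5.

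First I will check that $(\ct_A,\pi,H,D_B)$ is a spectral triple on $A$. Since $A \subset B$ and $\ct_A \subset \ct_B$, this is just the restriction (in the sense of Definition~\ref{pullback}) of the spectral triple on $B$: the representation $\pi = \pi_B|_A$ is a $*$-representation on the same Hilbert space, $(I+D_B^2)^{-1/2}$ is unchanged and still compact, and $[D_B,\pi(a)] = [D_B,\pi_B(a)]$ is bounded for $a \in \ct_A$. Next, I will verify that $(\pi,U)$ is a twisted covariant representation of $(A,\widehat{G},\rho,\sigma)$. The covariance condition $\pi \circ \rho_{\widehat{g}} = \operatorname{Ad}(U(\widehat{g})) \circ \pi$ is immediate from $\rho_{\widehat{g}}(a) = \mu_{\widehat{g}} a \mu_{\widehat{g}}^*$ (Theorem~\ref{isocrossed}) by applying $\pi_B$, which is a $*$-homomorphism. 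The cocycle relation $U(\widehat{j})U(\widehat{k}) = \pi(\sigma_{\widehat{j},\widehat{k}}) U(\widehat{j}+\widehat{k})$ follows from the formula $\sigma_{\widehat{j},\widehat{k}} = \mu_{\widehat{j}}\mu_{\widehat{k}}\mu_{\widehat{j}+\widehat{k}}^*$ of Theorem~\ref{isocrossed} by applying $\pi_B$ and using unitarity of $\pi_B(\mu_{\widehat{j}+\widehat{k}})$. Unitarity of $U(\widehat{g})$ follows because each $\zcW_{s(\widehat{g})}$ is unitary in $B$.

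For the last two conditions, namely invariance of $\Dom(D_B)$ under $U(\widehat{g})$ and boundedness of $[D_B,U(\widehat{g})]$, the point is that $\mu_{\widehat{g}} = \zcW_{s(\widehat{g})}$ belongs to the smooth subalgebra $\ct_B$ of $B$. Therefore both properties are just instances of the corresponding properties in the spectral triple $(\ct_B,\pi_B,\ch_B,D_B,\G_B)$ on $B$ of Definition~\ref{Cliff5.05}. I expect no real obstacle here: this is essentially a bookkeeping exercise combining Theorem~\ref{isocrossed} with the definition of the spectral triple on the noncommutative torus. The only point worth writing out explicitly is that $\sigma_{\widehat{j},\widehat{k}}$ actually lies in $A = B^{\g}$ (so that $\pi(\sigma_{\widehat{j},\widehat{k}})$ makes sense as an operator coming from $A$), but this is already known: by construction $s(\widehat{j})+s(\widehat{k}) - s(\widehat{j}+\widehat{k}) \in M^T\bz^2$, and $\{\zcW_{M^Tn}\}_{n}$ generate $A$.
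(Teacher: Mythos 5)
Your proposal is correct and is essentially the argument the paper intends: the paper's own proof is the single line ``Computations with the commutation relations,'' and your write-up simply organizes those computations via Theorem \ref{isocrossed} (for the covariance and cocycle identities of $(\pi,U)$), the restriction principle of Definition \ref{pullback} (for the spectral-triple axioms on $\ct_A$), and the membership $\mu_{\widehat{g}}=\zcW_{s(\widehat{g})}\in\ct_B$ (for domain invariance and boundedness of $[D_B,U(\widehat{g})]$). Your remark that $\sigma_{\widehat{j},\widehat{k}}\in A$ because $s(\widehat{j})+s(\widehat{k})-s(\widehat{j}+\widehat{k})\in M^T\bz^2$ is the one point genuinely worth spelling out, and it is handled correctly.
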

\begin{proof}
Computations with the commutation relations.
\end{proof}
In this case the equivariant construction (starting with an even triple) produces an odd spectral triple described in the following. 
%%%%%%%%%%%%%%%%%%%%%%%%%%%%%%%%%%

\begin{prop} \label{}
Introducing the Hilbert space $\widetilde{H} :=  (L^2(B,\t_B) \otimes \ell^{2}(\widehat{G})\otimes V) \oplus (L^2(B,\t_B) \otimes \ell^{2}(\widehat{G})\otimes V)$, the representation  
$\hat{\pi}\times \hat{L}: A \rtimes_r^{\rho,\sigma} \widehat{G} \rightarrow \mathbb{B}(\widetilde{H})$ determined by
\begin{align*}
\hat{\pi}\times \hat{L}(\zcW_{M^Tx}\delta_{\,\widehat{j}}) & 
\begin{pmatrix}
\iota_B( \zcW_m) \otimes \delta_{\,\widehat{g}} \otimes v \\
\iota_B( \zcW_n) \otimes \delta_{\,\widehat{h}} \otimes w
\end{pmatrix} 
\\ & = 
\begin{pmatrix}
e^{-\pi i[ 
\langle M^T x,\Theta s(\widehat{j}) \rangle + 
\langle M^T x+s(\widehat{j}), \Theta m \rangle 
] } 
\iota_B\big{(}  \zcW_{m+M^Tx+s(\widehat{j})} \big{)} 
\otimes \delta_{\,\widehat{j}+\widehat{g}} \otimes v 
\\
 e^{-\pi i[ 
\langle M^T x,\Theta s(\widehat{j}) \rangle + 
\langle M^T x+s(\widehat{j}), \Theta n \rangle 
] } 
\iota_B \big{(}  \zcW_{n+M^Tx+s(\widehat{j})} \big{)} 
\otimes \delta_{\,\widehat{j}+\widehat{h}} \otimes w 
\end{pmatrix}, 
\end{align*}
for $n,m\in\bz^2$, and the self-adjoint operator
$$
\widetilde{D} =
\begin{pmatrix}
	I_{L^2(B,\t_B)} \otimes M_{\vecLength} &  (-i \delta^B_1 -\delta^B_2) \otimes I_{\ell^2({G})} \otimes I_V \\
	(-i \delta^B_1 +\delta^B_2) \otimes I_{\ell^2({G})} \otimes I_V  & -I_{L^2(B,\t_B)} \otimes  M_{\vecLength}
\end{pmatrix},
$$
so that, for $n,m\in\bz^2$,
\begin{align*}
\widetilde{D}  
\begin{pmatrix}
\iota_B( \zcW_m) \otimes \delta_{\widehat{g}} \otimes v \\
\iota_B( \zcW_n) \otimes \delta_{\widehat{h}} \otimes w
\end{pmatrix} 
 = 
\begin{pmatrix}
	\iota_B( \zcW_m) \otimes \delta_{\widehat{g}} \otimes \vecLength(\widehat{g}) v + 
	(-in_1-n_2)\iota_B(   \zcW_n)  \otimes \delta_{\widehat{h}} \otimes w \\
	(-im_1+m_2)\iota_B(   \zcW_m)  \otimes \delta_{\widehat{g}} \otimes v - \iota_B( \zcW_n) \otimes \delta_{\widehat{h}} \otimes \vecLength(\widehat{h})  w
\end{pmatrix},
\end{align*}
we get that 
$(C(\widehat{G},\ct_A),\hat{\pi}\times \hat{L},\widetilde{H},\widetilde{D})$ is a regular spectral triple on the twisted crossed product $A \rtimes_{r}^{\rho, \sigma} G$.
\end{prop}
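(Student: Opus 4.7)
The plan is to identify the given triple as an instance of the equivariant construction of Subsection \ref{equivariant} (even-to-odd case), and then invoke Theorem \ref{maintheorem}$(1)$ to reduce regularity to that of the two factor triples described there.

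First I would verify that $(C(\widehat{G},\ct_A),\hat{\pi}\times \hat{L},\widetilde{H},\widetilde{D})$ is precisely the equivariant triple associated to the $\widehat{G}$-equivariant even spectral triple $(\ct_A,\pi_B|_A,\ch_B,D_B,\G_B)$ of the previous proposition together with the Clifford length function $\vecLength$ on $\widehat{G}$. The Dirac operator $\widetilde{D}$ is a direct specialization of \eqref{diracequivariant} (in its even-to-odd form) with $D^{\pm}_B=-i(\delta^B_1\pm i\delta^B_2)$. For the representation, one expands
$$
\hat{\pi}(\zcW_{M^Tx})\,U(\widehat{j})=\pi_B(\zcW_{M^Tx})\,\pi_B(\zcW_{s(\widehat{j})})=\s_\Theta(M^Tx,s(\widehat{j}))\,\pi_B(\zcW_{M^Tx+s(\widehat{j})})
$$
and applies this to the basis vector $\iota_B(\zcW_m)$ through one further use of $\zcW_a\zcW_b=\s_\Theta(a,b)\zcW_{a+b}$; the two factors of $\s_\Theta$ combine into the exponential phase of the statement. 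This is routine cocycle bookkeeping and constitutes the main technical step of the argument.

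With the identification in place, Theorem \ref{maintheorem}$(1)$ (combined with Proposition \ref{productrestriction}) reduces regularity of $(C(\widehat{G},\ct_A),\hat{\pi}\times \hat{L},\widetilde{H},\widetilde{D})$ to regularity of the two factor triples $(\mathcal{B},\ch_B,D_B)$ and $(C_c(\widehat{G}),\ell^2(\widehat{G})\otimes\bc^2,M_{\vecLength})$. Since $\widehat{G}$ is finite, the algebra $\mathcal{B}$ of finite sums $\sum_{\widehat{g}}a_{\widehat{g}}U(\widehat{g})=\pi_B\bigl(\sum_{\widehat{g}}a_{\widehat{g}}\mu_{\widehat{g}}\bigr)$ with $a_{\widehat{g}}\in\ct_A$ coincides, via $\pi_B$, with the smooth noncommutative torus $\ct_B$; regularity of the canonical triple $(\ct_B,\ch_B,D_B)$ is a classical fact, verifiable directly on the basis $\iota_B(\zcW_r)$, since $[|D_B|,\pi_B(\zcW_s)]$ acts there by multiplication by $2\pi(\|s+r\|-\|r\|)\s_\Theta(s,r)$, uniformly bounded in $r$ by $2\pi\|s\|$, with analogous uniform bounds for all iterated commutators of the type $\delta^k(\pi_B(\zcW_s))$ and $\delta^k([D_B,\pi_B(\zcW_s)])$; should a restriction be needed, Proposition \ref{regularpullback} handles it. For the second factor, the Clifford relation $\vecLength(\widehat{g})^2=\|s(\widehat{g})\|^2 I_2$ implies that $|\vecLength(\widehat{h})|$ is a scalar operator, so the commutation hypothesis \eqref{tech} of Proposition \ref{23} holds automatically and the group triple is regular.
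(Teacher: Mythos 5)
Your proposal is correct and follows essentially the same route as the paper: identify the two factor triples, establish regularity of each, and conclude via Theorem \ref{maintheorem}$(1)$ (with Proposition \ref{23} handling the group factor, since the Clifford relation makes $|\vecLength(\widehat{h})|$ scalar). The only difference is that where the paper cites the known regularity of the canonical noncommutative-torus triple, you verify it directly on the basis $\iota_B(\zcW_r)$ --- a correct computation that makes the argument self-contained.
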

\begin{proof}
Setting $C := \pi\times U(A \rtimes_r^{\r,\s} \widehat{G}) \equiv \pi_B(B)$, and $\mathcal{C}:=\{ \sum_{\widehat{g}\in\widehat{G}} a_{\widehat{g}} U(\widehat{g}) : a_{\widehat{g}} \in\ct_A \} \equiv \pi_B(\ct_B)$. The spectral triple $(\mathcal{C},\ch_B,D_B)$ on the $C^*$-algebra $C$ is regular, being the pullback with $\pi_B^{-1}$ of the spectral triple $(\ct_B,\pi_B,\ch_B,D_B)$, which is regular (see \cite{gracia2013elements}, pag. 543). Moreover, $(\bc(\widehat{G}),\ell^2(\widehat{G})\otimes V, M_{\vecLength})$ is a regular spectral triple on $C^*_r(\widehat{G})$, because of Proposition \ref{23}.
Therefore, by Theorem \ref{maintheorem}, 
$(C(\widehat{G},\ct_A),\hat{\pi}\times \hat{L},\widetilde{H},\widetilde{D})$ is a regular spectral triple on the twisted crossed product $A \rtimes_{r}^{\rho, \sigma} G$.
\end{proof}
\appendix

\section{Infinitesimal order of compact operators}\label{app1}

Let $T$ be a positive compact operator acting on a separable Hilbert space $\H$, denote by $\{\mu_n(T)\}_{n\geq0}$ the non-increasing sequence (with multiplicity) of its eigenvalues and set
\begin{displaymath}
	\lambda_t(T)\coloneqq \#\{n\geq0: \mu_n(T)> t\}
\end{displaymath} 
for $t>0$, cf. e.g. \cite{FaKo}.
%\begin{defn}\label{defn:inf-order}
%	We call \emph{infinitesimal order} of $T$  the number $o(T)=\inf\{s>0:\tr T^{s}<\infty\}$.
%\end{defn}
%Note that if $T$ is finite rank, $o(T)=0$. 
The following holds:
\begin{thm}\label{thm:inf-order}
	$$
	\inf\{s>0:\tr T^{s}<\infty\}=
	\left(
	\liminf_{n\to\infty}\frac{\log(\mu_{n}(T))}{\log(1/n)}
	\right)^{-1}=
	\limsup_{t\to0}\frac{\log(\lambda_{t}(T))}{\log (1/t)}
	=\limsup_{n\to\infty}\frac{\log(\lambda_{1/n}(T))}{\log n}.
	$$
\end{thm}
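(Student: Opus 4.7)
Denote the four quantities in the theorem by $\alpha$, $\beta$, $\gamma$, $\delta$, in the order they appear. The plan is to establish $\gamma=\delta$, $\beta=\gamma$ and $\alpha=\gamma$ separately; the first two are sandwich arguments, while the third is the substantive step.

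The equality $\gamma=\delta$ is immediate from the monotonicity of $t\mapsto\lambda_t(T)$: for $t\in[1/(n+1),1/n]$ one has $\lambda_{1/n}(T)\le\lambda_t(T)\le\lambda_{1/(n+1)}(T)$ and $\log n\le\log(1/t)\le\log(n+1)$, so the ratio $\log\lambda_t(T)/\log(1/t)$ is squeezed between $\log\lambda_{1/n}(T)/\log(n+1)$ and $\log\lambda_{1/(n+1)}(T)/\log n$, both of which have $\limsup$ equal to $\delta$.

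For $\beta=\gamma$, the key point is the duality $\mu_n(T)>t \Leftrightarrow n<\lambda_t(T)$, which holds because $\lambda_t(T)$ counts the indices $n$ with $\mu_n(T)>t$ and $\mu_\bullet$ is non-increasing. Given $r'<1/\beta$, the definition of $\liminf$ yields $\mu_n(T)<n^{-r'}$ for all large $n$, hence $\lambda_{n^{-r'}}(T)\le n$, which upon taking $\limsup$ gives $\gamma\le 1/r'$; letting $r'\nearrow 1/\beta$ yields $\gamma\le\beta$. For the reverse inequality, given $r''>1/\beta$ there are infinitely many $n$ with $\mu_n(T)>n^{-r''}$, and for such $n$ one has $\lambda_{n^{-r''}}(T)>n$, whence $\gamma\ge 1/r''$; letting $r''\searrow 1/\beta$ gives $\gamma\ge\beta$.

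The heart of the argument is $\alpha=\gamma$, which I would prove using the layer-cake identity
$$
\tr T^s \;=\; \sum_{n\ge 0}\mu_n(T)^s \;=\; \int_0^\infty \#\{n:\mu_n(T)^s>t\}\,dt \;=\; s\int_0^{\|T\|}\lambda_u(T)\,u^{s-1}\,du,
$$
obtained after the substitution $u=t^{1/s}$ (the support being bounded by $\|T\|$ since $T$ is bounded). For $\alpha\le\gamma$: any $\gamma'>\gamma$ gives $\lambda_u(T)\le C u^{-\gamma'}$ for small $u$, so the integral converges at $u=0$ whenever $s>\gamma'$. For $\alpha\ge\gamma$: given $\gamma''<\gamma$ I would extract a sequence $u_k\to 0$ with $\lambda_{u_k}(T)>u_k^{-\gamma''}$, and use monotonicity of $\lambda_u(T)$ on $[u_k/2,u_k]$ to bound the integral below by
$$
\int_{u_k/2}^{u_k}\lambda_u(T)\,u^{s-1}\,du \;\ge\; \tfrac{1-2^{-s}}{s}\,u_k^{s-\gamma''},
$$
which is unbounded in $k$ when $s<\gamma''$, forcing $\tr T^s=\infty$. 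The main technical subtlety throughout is that no Weyl-type asymptotic is assumed on the eigenvalue sequence, which is precisely why the $\liminf$/$\limsup$ together with subsequence extraction must replace the cleaner regular-variation argument available in a smooth setting.
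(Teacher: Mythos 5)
Your proof is correct, and it takes a genuinely different route from the paper's. The paper does not prove the first equality at all: it cites it from an earlier reference (the classical ``exponent of convergence'' formula for $\sum_n \mu_n(T)^s$), and it obtains the second and third equalities from an auxiliary lemma about right-continuous, piecewise-constant monotone functions, which transfers $\limsup$/$\liminf$ computations from the continuous parameter to the sequence of discontinuity points, combined with the inverse-function relation between $\mu$ and $\lambda$. You instead prove everything from scratch: the sandwich arguments for $\gamma=\delta$ and $\beta=\gamma$ rest on the counting duality $\mu_n(T)>t \iff n<\lambda_t(T)$ (which is the same structural fact the paper's lemma encodes, but used pointwise rather than packaged as a statement about step functions), and the substantive equality $\alpha=\gamma$ comes from the layer-cake identity $\tr T^s = s\int_0^{\|T\|}\lambda_u(T)\,u^{s-1}\,du$, which ties the trace directly to the counting function and removes the need to route through $\beta$ or to invoke the cited convergence-exponent theorem. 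What your approach buys is self-containedness and a conceptually transparent link between summability and the growth of $\lambda_t(T)$; what the paper's approach buys is a reusable lemma that handles both remaining equalities uniformly. One small point worth making explicit: in the step ``$\lambda_{n^{-r'}}(T)\le n$, which upon taking $\limsup$ gives $\gamma\le 1/r'$'', the bound a priori controls only the $\limsup$ along the subsequence $t=n^{-r'}$; you need one line of monotone interpolation (for $t\in[(n+1)^{-r'},n^{-r'}]$ one has $\lambda_t(T)\le n+1$ and $\log(1/t)\ge r'\log n$), exactly as you already do in the $\gamma=\delta$ step, to pass to the full $\limsup$ over $t\to 0$. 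This is routine and does not affect the validity of the argument.
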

The first equality in the statement has been  proved in \cite{GuIs9} theorem 1.4. The second equality has been  proved in \cite{GuIs5} proposition 1.13 for general semifinite von~Neumann algebras, we give a Hilbert space proof here for the sake of simplicity. We first need a lemma.
\begin{lemma}\label{ausiliary}
	Let $f:(0,\infty)\to(0,\infty)$ be a right-continuous, non-increasing, piecewise-constant function. 
	\item{$(1)$} If $\lim_{t\to\infty}f(t)=0$ and the set of discontinuity points consists of an unbounded increasing sequence $x_n$, then
	$$
	\limsup_{t\to\infty}\frac{\log(1/f(t))}{\log t}
	=\limsup_n\frac{\log(1/f(x_n))}{\log x_{n}},
	\quad
	\liminf_{t\to\infty}\frac{\log(1/f(t))}{\log t}
	=\liminf_n\frac{\log(1/f(x_n))}{\log x_{n+1}}.
	$$
	\item{$(2)$} If $\lim_{t\to0}f(t)=+\infty$ and the set of discontinuity points consists of an infinitesimal decreasing sequence $x_n$, then
	$$
	\limsup_{t\to0}\frac{\log f(t)}{\log 1/t}
	=\limsup_n\frac{\log f(x_n)}{\log 1/x_{n-1}}
	,\quad
	\liminf_{t\to0}\frac{\log f(t)}{\log 1/t}
	=\liminf_n\frac{\log f(x_n)}{\log 1/x_{n}}.
	$$
\end{lemma}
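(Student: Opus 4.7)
The lemma is an elementary bookkeeping statement about the behavior of a ratio $\log(1/f(t))/\log t$ when the numerator is piecewise constant. My plan is to prove it by a direct interval-by-interval analysis, exploiting the fact that on each interval between consecutive jumps the numerator is frozen, so only the denominator varies monotonically.

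For part $(1)$, right-continuity and the piecewise-constant assumption give $f(t)=f(x_n)$ for all $t\in[x_n,x_{n+1})$. Since $f$ is non-increasing with $\lim_{t\to\infty}f(t)=0$, for $n$ large enough we have $0<f(x_n)<1$, so $\log(1/f(x_n))>0$; moreover $x_n\to\infty$, so eventually $\log t>0$ on the relevant intervals. Hence on $[x_n,x_{n+1})$ the function $t\mapsto \log(1/f(t))/\log t=\log(1/f(x_n))/\log t$ is a positive constant divided by a positive strictly increasing quantity, and is therefore strictly decreasing in $t$. Its supremum over that interval is attained at the left endpoint and equals $\log(1/f(x_n))/\log x_n$; its infimum is the limit as $t\to x_{n+1}^-$, namely $\log(1/f(x_n))/\log x_{n+1}$.

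Taking the supremum over $t\in[x_N,\infty)$ of the ratio amounts to taking the supremum of these per-interval suprema for $n\ge N$, and letting $N\to\infty$ gives
\[
\limsup_{t\to\infty}\frac{\log(1/f(t))}{\log t}=\limsup_n\frac{\log(1/f(x_n))}{\log x_n}.
\]
Symmetrically the infimum over $t\in[x_N,\infty)$ is the infimum of the per-interval infima, so
\[
\liminf_{t\to\infty}\frac{\log(1/f(t))}{\log t}=\liminf_n\frac{\log(1/f(x_n))}{\log x_{n+1}},
\]
which gives the two equalities of $(1)$.

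Part $(2)$ is proved in the same way with the direction reversed. Right-continuity and monotonicity give $f(t)=f(x_n)$ for all $t\in[x_n,x_{n-1})$ (since $\{x_n\}$ is decreasing). Because $f(x_n)\to\infty$ we have $\log f(x_n)>0$ eventually, and $\log(1/t)>0$ since $x_n\to 0$; on the interval $[x_n,x_{n-1})$ the quantity $\log(1/t)$ is positive and strictly decreases as $t$ grows, so $t\mapsto\log f(x_n)/\log(1/t)$ is strictly increasing in $t$. Its supremum on the interval is the limit at the right endpoint, $\log f(x_n)/\log(1/x_{n-1})$, and its infimum at the left endpoint equals $\log f(x_n)/\log(1/x_n)$. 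Taking $\limsup$, resp.\ $\liminf$, as $t\to 0$ (equivalently as $n\to\infty$) yields the two claimed formulae. The main, and essentially only, subtle point throughout is the indexing mismatch between numerator and denominator in the $\liminf$ statement of $(1)$ and the $\limsup$ statement of $(2)$; I expect no serious obstacle beyond tracking which endpoint of the constancy interval $[x_n,x_{n+1})$ (resp.\ $[x_n,x_{n-1})$) realizes the sup and which the inf, and noting that the limiting values at the open endpoints are attained by right-continuity from the neighbouring interval.
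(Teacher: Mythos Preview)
Your argument is correct. The key observation---that on each constancy interval $[x_n,x_{n+1})$ the ratio $\log(1/f(t))/\log t$ is a positive constant divided by a strictly increasing positive quantity, hence attains its supremum at the left endpoint and has infimum equal to the limit at the right endpoint---reduces the computation of $\limsup$ and $\liminf$ over $t$ to the corresponding $\limsup$ and $\liminf$ of the discrete sequences of per-interval extrema. This is clean and complete. (One minor quibble: your closing remark that the limiting value at the open endpoint is ``attained by right-continuity from the neighbouring interval'' is not quite right, since $f$ actually jumps there; but you only need the infimum, not an attained minimum, so this does not affect the argument.)

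The paper's proof takes a different, more indirect route. Rather than computing suprema and infima over tails directly, it fixes an arbitrary sequence $t_k$ along which the ratio converges, thins it so that at most one $t_k$ lies in each interval $[x_{n_k},x_{n_k+1})$, and then sandwiches $\log(1/f(t_k))/\log t_k$ between the ratio evaluated at $x_{n_k}$ and at an auxiliary point $y_{n_k}\in[x_{n_k},x_{n_k+1})$ chosen close to $x_{n_k+1}$. Passing to $\limsup$/$\liminf$ along the subsequence $n_k$ then yields the claimed equalities. Your approach avoids the subsequence bookkeeping and the auxiliary sequence $y_n$ entirely by exploiting the monotonicity on each constancy interval; it is shorter and more transparent for this particular statement. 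The paper's sequential-extraction method is a more general-purpose technique that would adapt to situations where the per-interval behaviour is less explicit.
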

\begin{proof}
	(1) Let $t_k$ be an increasing sequence such that $\exists \lim_k\frac{\log(1/f(t_k))}{\log t_{k}}$.
	Possibly passing to a subsequence, we may assume that, for any $n\in\N$, there is at most one $k$ such that $x_n\leq t_k<x_{n+1}$, denote by $n_k$ the indices for which  
	$x_{n_k}\leq t_k<x_{n_k+1}$. Since $f(t)$ is constant in $[x_{n_k},x_{n_k+1})$ and $1/\log t$ is decreasing, we have, for any $y_{n}\in[x_n,x_{n+1})$ such that $y_{n_k}\geq t_{k}$, the inequalities
	$$
	\frac{\log(1/f(y_{n_k}))}{\log y_{n_k}}
	\leq \frac{\log(1/f(t_k))}{\log t_{k}}
	\leq\frac{\log(1/f(x_{n_k}))}{\log x_{n_{k}}}.
	$$
	On the one hand we get
	$$
	\lim_k\frac{\log(1/f(t_k))}{\log t_{k}}
	\leq\limsup_{n}\frac{\log(1/f(x_{n_k}))}{\log x_{n_{k}}}
	\leq\limsup_n\frac{\log(1/f(x_n))}{\log x_{n}}
	\leq\limsup_{t\to\infty}\frac{\log(1/f(t))}{\log t}.
	$$
	On the other hand, 
	$$
	\liminf_{t\to\infty}\frac{\log(1/f(t))}{\log t}
	\leq\liminf_n\frac{\log(1/f(y_{n}))}{\log y_{n}}
	\leq\liminf_k\frac{\log(1/f(y_{n_k}))}{\log y_{n_k}}
	\leq \lim_k\frac{\log(1/f(t_k))}{\log t_{k}}.
	$$
	Finally, by choosing $y_n$ close enough to $x_{n+1}$ so that
	$\frac{\log x_{n+1}}{\log y_n}\to1$, we get
	$$
	\liminf_n\frac{\log(1/f(y_{n}))}{\log y_{n}}
	=\liminf_n \frac{\log(1/f(x_{n}))}{\log x_{n+1}}.
	$$
	Since $t_k$ may be chosen as to reach the $\liminf$ or the $\limsup$, part (1) follows.
	
	As for part (2),  let $t_k$ be a decreasing sequence such that $\exists \lim_k\frac{\log(f(t_k))}{\log 1/t_{k}}$. As above, we may assume that, for a suitable subsequence $x_{n_k}$, $x_{n_k}\leq t_k<x_{n_k-1}$, and consider a sequence $y_{n}\in[x_n,x_{n-1})$ such that $y_{n_k}> t_{k}$.
	Again, 
	$$
	\frac{\log f(x_{n_k})}{\log 1/x_{n_{k}}}
	\leq \frac{\log f(t_k)}{\log1/ t_{k}}
	\leq\frac{\log f(y_{n_k})}{\log 1/y_{n_k}}.
	$$
	The rest of the proof follows as in part (1).
\end{proof}

\begin{proof}[Proof of theorem \ref{thm:inf-order}]
	During this proof, we suppress the dependence on $T$.
	We first extend the sequence $n\geq0\to\mu_n$ to a right-continuous non increasing function $t\in[0,\infty)\to\mu(t)$ by posing $\mu(t)=\mu_n$ for $n\leq t< n+1$. We also write $\lambda(t)$ instead of $\lambda_t$.

%	Let us  prove the first equality. We observe first that
%	$\tr T^\alpha=\sum_{n\geq0}\mu_n(T)^\alpha=\int_0^\infty\mu(t)^{\alpha}dt$,
%	then set 
%	\begin{displaymath}
%		d=\left(\liminf_{n\to\infty}\frac{\log(\mu_{n}(T))}{\log(1/n)}\right)^{-1}=\left(\liminf_{t\to\infty}\frac{\log(\mu(t))}{\log(1/t)}\right)^{-1}
%	\end{displaymath} and $\Omega=\{\alpha>0:\int_0^\infty\mu(t)^\alpha dt<\infty\}$.  We prove that   $o(T)\leq d$ and that $d\leq o(T)$.
%	\begin{itemize}
%		\item To prove that $o(T)\leq d$, set $a(t)=\frac{\log1/\mu(t)}{\log(t)}$. In particular
%		$\mu(t)=t^{-a(t)}$ and $\liminf_{t\to\infty} a(t)=1/d$.  If 
%		$\alpha>d$, then $\liminf_{t\to\infty}\alpha a(t)=\alpha/d>1$, hence 
%		there exists $\beta>1$ such that $\alpha a(t)\geq\beta$ for $t$ 
%		sufficiently large.  Therefore
%		$$
%		\int_{0}^\infty\mu(t)^\alpha dt=\int_{0}^\infty t^{-\alpha a(t)}dt\leq const+
%		\int_{0}^\infty t^{-\beta}dt<\infty,
%		$$
%		which implies $\alpha\in\Omega$, namely $(d,\infty)\subset\Omega$.
%		\item On the other hand, to prove that $d\leq o(T)$ we may assume $d>0$, namely $1/d<\infty$.  
%		Now let $t_k\to\infty$ be such that
%		$\ell_k:=\frac 
%		{\log1/\mu(t_k)} {\log {t_k}} \to 1/d$.  We have $\mu(t_k) = t_k^{-\ell_k}$.  Let now 
%		$\alpha<d$, namely $\alpha \ell_k\to\alpha/d<1$, and 
%		choose $\eps>0$ such that $\alpha \ell_k\leq 1-\eps$ eventually.  Then
%		$$
%		\int_{0}^{t_k}\mu(t)^\alpha dt \geq t_k\mu(t_k)^\alpha = t_k\cdot t_k^{-\alpha 
%			\ell_k} \geq t_k^\eps\to\infty,\quad \text{ for } k\to\infty.
%		$$
%		Therefore $\alpha\leq o(T)$, i.e. $d\leq o(T)$.
%	\end{itemize}
	
The first equality in the statement is  proved in \cite{GuIs9} theorem 1.4.		We  prove the second equality.
Let us  note that $\mu(t)$ satisfies the hypotheses of part (1) of  lemma  \ref{ausiliary}, 
where the set of discontinuity points consists of a sequence $\{p_n,n\in\N\}\subseteq\N$,
while $\lambda(t)$ satisfies the hypotheses of part (2), where the set of discontinuity points consists of the sequence $\{\mu(p_n),n\in\N\}$. We also note that $\lambda\circ\mu(p_n)=p_n$.
	By lemma above,
	\begin{displaymath}
		\displaystyle
		\liminf_{t\to\infty}\frac{\log(1/\mu(t))}{\log t}
		=\liminf_n\frac{\log(1/\mu(p_n))}{\log p_{n+1}},
	\qquad 
		\displaystyle
		\limsup_{t\to0}\frac{\log(\lambda(t))}{\log 1/t}
		=\limsup_n\frac{\log (\lambda(\mu_{p_n}))}{\log 1/\mu_{p_n-1}}.
	\end{displaymath}
	Finally,
	$$
	\limsup_n\frac{\log (\lambda(\mu_{p_n}))}{\log 1/\mu_{p_n-1}}
	=\limsup_n\frac{\log p_n}{\log 1/\mu_{p_n-1}}
	=\left(\liminf_n\frac{\log 1/\mu_{p_n-1}}{\log p_n}\right)^{-1}.
	$$
	The third equality comes from lemma \ref{ausiliary}. 
\end{proof}

\section{Regularity of spectral triples}\label{regularity}
%\subsection{Regularity for Spectral Triples}\label{sec1} 
Let $(\mathcal{A}, H,D)$ be a spectral triple over a unital $C^{*}$-algebra $A$ and 
\begin{equation}\label{delta}
	\delta(T)   \coloneqq [\abs{D}, T]
\end{equation} be the unbounded derivation on the domain $\Dom(\delta)$ of elements $T\in \mathbb{B}(H)$  which preserve $\Dom(\abs{D})$ and for which $[\abs{D}, T]$ extends to a bounded operator on $H$. For $k\geq 2$ we define $\delta^{k}$ inductively on the domain $\Dom(\delta^{k})= \Set{T\in \Dom(\delta)\,|\, \delta(T)\in \Dom(\delta^{k-1})}$ and we set
\begin{displaymath}
	\Dom^{\infty}(\delta) = \bigcap_{k=1}^{\infty}\Dom(\delta^{k}).
\end{displaymath}

\begin{defn}We say that  $(\mathcal{A}, H,D)$ is \emph{regular} if both $\pi(\mathcal{A})$ and $[D,\pi(\mathcal{A})]$ belong to $\Dom^{\infty}(\delta)$. 
	\end{defn}

In general the operator $\abs{D}$ is not invertible, a property that would certainly make life easier when dealing with regularity . The following result shows that it is possible to replace $\abs{D}$ with the invertible bounded perturbation $\Delta^{\frac{1}{2}} \coloneqq (1+D^{2})^{\frac{1}{2}}$. 

\begin{thm}[cf. \cite{higson2003local,uuye2011pseudo}]A spectral triple $(\mathcal{A}, H,D)$ over a unital $C^{*}$-algebra $A$ is regular if and only if
	$\pi(\mathcal{A})$ and $[D,\pi(\mathcal{A})])$ belong to $\Dom^{\infty}([\Delta^{\frac{1}{2}}, \cdot\,])$.  
\end{thm}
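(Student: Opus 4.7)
The plan is to prove that the two derivations
\[
\delta_{1}(T) := [|D|,T], \qquad \delta_{2}(T) := [\Delta^{1/2},T]
\]
have the same smooth domain as subsets of $\mathbb{B}(H)$. Once the equality $\Dom^{\infty}(\delta_{1}) = \Dom^{\infty}(\delta_{2})$ is established, applying it separately to $\pi(\mathcal{A})$ and to $[D,\pi(\mathcal{A})]$ immediately yields the statement.

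The central object is the difference operator $R := \Delta^{1/2} - |D|$. Writing $R = g(D)$ with $g(x) = \sqrt{1+x^{2}} - |x|$, and noting that $g$ is a bounded Borel function on $\mathbb{R}$, the Borel functional calculus of the self-adjoint operator $D$ gives that $R$ is a bounded operator on $H$. From this I would first record the routine consequences: $\Dom(|D|) = \Dom(\Delta^{1/2})$, and $R$ commutes with every Borel function of $D$, so in particular $R$ preserves $\Dom(|D|)$ and $\delta_{1}(R) = 0 = \delta_{2}(R)$.

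Now from $\Delta^{1/2} = |D| + R$ one gets, on the common dense domain, the identity
\[
\delta_{2}(T) = \delta_{1}(T) + [R,T].
\]
Since $R$ is bounded, $[R,T]$ is automatically bounded for any bounded $T$, hence $\Dom(\delta_{1}) = \Dom(\delta_{2})$. To iterate, I would use the Jacobi identity together with $\delta_{1}(R) = 0$ to prove by induction on $k$ that
\[
\delta_{1}^{k}([R,T]) = [R,\delta_{1}^{k}(T)] \quad \text{for every } T\in \Dom^{k}(\delta_{1}).
\]
Since $R$ is bounded, this shows $[R,T] \in \Dom^{k}(\delta_{1})$ whenever $T \in \Dom^{k}(\delta_{1})$. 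A second induction on $k$, using the decomposition $\delta_{2}(T) = \delta_{1}(T) + [R,T]$, then gives $\Dom^{k}(\delta_{1}) \subseteq \Dom^{k}(\delta_{2})$. The reverse inclusion is obtained by the symmetric argument: writing $|D| = \Delta^{1/2} - R$ and exploiting that $\delta_{2}(R) = 0$ as well (both $|D|$ and $\Delta^{1/2}$ being functions of $D$), the same induction with $\delta_{1}$ and $\delta_{2}$ interchanged closes the loop.

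The only delicate point is bookkeeping at the unbounded level: one must check that $R$ genuinely commutes with $|D|$ on $\Dom(|D|)$ (not merely in a formal sense) and that the Jacobi identity is legitimate when one of the entries is the unbounded operator $|D|$. Both facts follow from the spectral theorem applied to the single self-adjoint operator $D$, so I do not expect substantial analytic difficulty beyond making these manipulations rigorous on the appropriate domains.
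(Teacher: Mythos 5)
The paper states this theorem without proof, citing Higson and Uuye, and your argument is precisely the standard one from those references; it is correct: since $R=\Delta^{1/2}-|D|=g(D)$ with $g(x)=(\sqrt{1+x^{2}}+|x|)^{-1}$ bounded, $R$ is a bounded operator commuting with $|D|$ and $\Delta^{1/2}$ and preserving their common domain, so the two derivations differ by the inner, everywhere-defined derivation $[R,\cdot\,]$, and your two inductions close the argument in both directions. The only point worth writing out explicitly is the Jacobi identity $[|D|,[R,T]]=[R,[|D|,T]]$ on $\Dom(|D|)$, which holds because $R$ and $T$ both preserve $\Dom(|D|)$ and $R|D|\xi=|D|R\xi$ there; this is exactly the bookkeeping you flag, and it goes through without difficulty.
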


The regularity of a spectral triple is closely related to the existence of a so-called algebra of generalized differential operators \cite{conneslocalindexformula,higson2006residue, higson2003local}.
We recall some basic facts 
 (mainly following the exposition of \cite{uuye2011pseudo}). In the next pages $\Delta$ will be a self-adjoint positive and invertible operator on a Hilbert space $H$ (then strictly positive). When dealing with a spectral triple we take $\Delta=1+D^2$, so that $\Delta$ is thought to be of order two.

\begin{defn}
	 The \emph{$\Delta$-Sobolev space} of order $s\in \mathbb{R}$, denoted $	W^{s}= W^{s}(\Delta) $, is the Hilbert space completion of $\Dom(\Delta^{\frac{s}{2}})$ with respect to the inner product
	\begin{equation}\label{scpro}
		\langle \xi, \eta \rangle_{W^{s}} \coloneqq \langle \Delta^{\frac{s}{2}}\xi, \Delta^{\frac{s}{2}}\eta\rangle_{H}
	\end{equation}
	for every $\xi,\eta\in H$. 
\end{defn}
For any $t\leq s$ there exists a constant $C$ such that
	\begin{displaymath}
		\norm{\xi}_{W^{t}}\leq C\norm{\xi}_{W^{s}}
	\end{displaymath}
	for  $\xi\in W^{t}(\Delta)$. In particular, there is a continuous inclusion 	$W^{s}\subseteq W^{t}$.	Moreover, $\Dom(\Delta^{\frac{s}{2}})$ is complete in the norm \eqref{scpro} for any $s\geq 0$. (see \cite{uuye2011pseudo}\label{18} for a proof).
\begin{defn}
	 The space of \emph{$\Delta$-smooth vectors} is
	\begin{displaymath}
		W^{\infty}\coloneqq \bigcap_{s\in \mathbb{R}}W^{s}= \bigcap_{n=0}^{\infty}W^{2n}=\bigcap_{n=0}^{\infty}\Dom(\Delta^{n}).
	\end{displaymath}
\end{defn}

As the notation suggests, the space $W^{\infty}$ is dense in $W^{s}$ for any $s\in \mathbb{R}$ (in particular, also in $W^{0}=H$).  We denote the space of linear maps $P\colon W^{\infty}\rightarrow W^{\infty}$ by $\End(W^{\infty})$. 

\begin{defn}
	We say that a linear map $P\colon W^{\infty}\rightarrow W^{\infty}$ has \emph{analytic order} at most $t\in \mathbb{R}$ if it extends by continuity to a bounded linear operator $P\colon W^{s+t}\rightarrow W^{s}$ for any $s\in \mathbb{R}$. The space of such operators is denoted with $\textup{Op}^{t}= \textup{Op}^{t}(\Delta)$. We further define
	\begin{displaymath}
	\textup{Op} =	\textup{Op}(\Delta)\coloneqq \bigcup_{t\in \mathbb{R}} \textup{Op}^{t}(\Delta).
	\end{displaymath}
\end{defn}

Notice in particular that operators with analytic order at most $0$ extend to bounded linear operators on $H=W^{0}$, allowing us to identify $\textup{Op}^{0}$ with a subspace of $\mathbb{B}(H)$. We then have the following fact.

\begin{lemma}[cf.\cite{uuye2011pseudo}]\label{lemma}Operators with finite analytic order form a filtered algebra:
	\begin{enumerate}
		\item 	$\textup{Op}^{s}\subseteq \textup{Op}^{t}$ for any $s\leq t$
		\item $\textup{Op}^{s}\cdot \textup{Op}^{t}\subseteq \textup{Op}^{s+t}$
	\end{enumerate}
	In particular, $\textup{Op}^{0}$ is a subalgebra of $\textup{Op}\subseteq \mathbb{B}(H)$. 
\end{lemma}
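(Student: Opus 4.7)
The plan is to deduce both statements directly from the definitions, using as the only non-trivial input the continuity of the Sobolev inclusions $W^{s'} \hookrightarrow W^s$ for $s \leq s'$, which was recorded right after the definition of $W^s$. No deeper analytic facts should be needed.

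For part $(1)$, assume $P \in \operatorname{Op}^s$ and $s \leq t$. I need to show that $P$ extends to a bounded operator $W^{r+t} \to W^r$ for every $r \in \mathbb{R}$. Fix $r$: since $r+t \geq r+s$, the continuous inclusion $W^{r+t} \hookrightarrow W^{r+s}$ composed with the already available bounded extension $P : W^{r+s} \to W^r$ yields the required bounded map $P : W^{r+t} \to W^r$. This extension is automatically unique on $W^\infty$-valued operators because $W^\infty$ is dense in every $W^\sigma$.

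For part $(2)$, let $P \in \operatorname{Op}^s$ and $Q \in \operatorname{Op}^t$. Fix $r \in \mathbb{R}$. Applying the order-$t$ property of $Q$ with the parameter $u = r+s$ gives a bounded extension $Q : W^{r+s+t} \to W^{r+s}$. Applying the order-$s$ property of $P$ with the parameter $v = r$ gives a bounded extension $P : W^{r+s} \to W^r$. Composing, $PQ : W^{r+s+t} \to W^r$ is bounded, so $PQ \in \operatorname{Op}^{s+t}$ as required. Note that $P, Q$ preserve $W^\infty$ by definition, so the composition is well defined on $W^\infty$ before passing to extensions.

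The final \emph{In particular} clause is immediate: linearity of $\operatorname{Op}^0$ follows from the fact that a linear combination of operators that extend boundedly $W^r \to W^r$ for all $r$ again extends boundedly $W^r \to W^r$, while closure under composition is the case $s = t = 0$ of part $(2)$. The inclusion $\operatorname{Op}^0 \subseteq \mathbb{B}(H)$ is tautological upon setting $r = 0$. There is no real obstacle in the argument; the only point that deserves a line of justification is that the bounded extensions on the various $W^r$'s are mutually compatible, which follows because they all agree on the dense subspace $W^\infty$.
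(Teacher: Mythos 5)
Your argument is correct and is the standard one; the paper itself omits the proof of this lemma, simply citing \cite{uuye2011pseudo}, and your direct verification from the definitions (Sobolev inclusion for part (1), composition of the bounded extensions for part (2), with the compatibility of extensions guaranteed by density of $W^{\infty}$) is exactly what that reference does.
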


The algebra $\textup{Op}^{0}$ plays a central role in the regularity of a spectral triple.

\begin{prop}[cf. \cite{gracia2013elements}, lemma 10.22]\label{propreg} Let  $(\mathcal{A}, H,D)$ be a regular spectral triple over a unital $C^{*}$-algebra $A$ and $\Delta\coloneqq 1+D^{2}$. We have that both $\pi(\mathcal{A})$ and $[D,\pi(\mathcal{A})]$ are in $\textup{Op}^{0}(\Delta)$. 
	
\end{prop}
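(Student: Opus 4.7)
The goal is to show that each $T$ in $\pi(\mathcal{A}) \cup [D,\pi(\mathcal{A})]$ satisfies: $T$ preserves $W^\infty$ and extends to a bounded operator $W^s \to W^s$ for every $s \in \mathbb{R}$. The first natural move is to invoke the theorem just stated before the proposition, which allows me to replace the derivation $\delta = [|D|, \cdot]$ with the derivation $\delta'(S) := [\Delta^{1/2}, S]$ built from the \emph{invertible} operator $\Delta^{1/2} = (1+D^{2})^{1/2}$. Thus the hypothesis becomes $T \in \mathrm{Dom}^\infty(\delta')$, and the task is reduced to the abstract statement: if $T \in \mathbb{B}(H)$ with $T \in \mathrm{Dom}^\infty(\delta')$, then $T \in \mathrm{Op}^{0}(\Delta)$. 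The convenient observation is that $T : W^s \to W^s$ is bounded iff the conjugate $F(s) := \Delta^{s/2} T \Delta^{-s/2}$ extends to a bounded operator on $H$, since $\Delta^{s/2}: W^s \to W^0 = H$ is an isometric isomorphism.

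The core step is treating non-negative integer exponents. Proceeding by induction on $n \geq 0$, I would expand $[\Delta^{n/2}, T]$ using the Leibniz rule for the derivation $\delta'$, obtaining a finite sum of terms of the form $\Delta^{k/2}\, \delta'^{j}(T)\, \Delta^{(n-1-k-j+1)/2}$ with $j \geq 1$, and then right-multiply by $\Delta^{-n/2}$. Rewritten, each term has the shape $\sigma^{k}(\delta'^{j}(T))\,\Delta^{-j/2}$, where $\sigma(S) = \Delta^{1/2}S\Delta^{-1/2}$; since $\delta'^{j}(T)$ is bounded for all $j$ by hypothesis, and since iterating $\sigma$ on a lower-degree element in $\mathrm{Dom}^\infty(\delta')$ is controlled by the inductive hypothesis, every summand is bounded. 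Thus $F(n) = T + [\Delta^{n/2}, T]\Delta^{-n/2}$ is bounded for all integers $n \geq 0$.

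To pass to negative integers I would use adjoints: both $\pi(\mathcal{A})$ and $[D,\pi(\mathcal{A})]$ are stable (up to sign) under $*$, and an easy computation shows $\delta'(S)^{*} = -\delta'(S^{*})$, so $\mathrm{Dom}^\infty(\delta')$ is stable under adjoints. Applying the positive-integer case to $T^{*}$ and taking adjoints yields that $F(-n)$ is bounded. For arbitrary real $s$ I would close the argument by a standard complex-interpolation step: fix $N \in \mathbb{N}$, define $F(z) := \Delta^{z/2} T \Delta^{-z/2}$ on the closed strip $|\mathrm{Re}(z)| \leq N$, verify that $z \mapsto \langle F(z)\xi,\eta\rangle$ is holomorphic in the interior and continuous up to the boundary for $\xi,\eta$ in a dense set of analytic vectors, show it has subexponential growth in $|\mathrm{Im}(z)|$, and apply the Hadamard three-lines (Stein interpolation) lemma using the boundary bounds obtained at $\mathrm{Re}(z) = \pm N$.

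The main obstacle I anticipate is the analytic-interpolation step: one has to justify holomorphicity of $F(z)$ and a suitable growth control on vertical lines, which requires first producing a dense domain of analytic vectors for $\log \Delta$ (or equivalently, vectors for which the spectral integrals defining $\Delta^{z/2}$ are entire of admissible type). All the remaining ingredients, namely the combinatorial expansion of iterated commutators of powers of $\Delta^{1/2}$, the stability of $\mathrm{Dom}^\infty(\delta')$ under $\sigma$ and under $*$, and the passage between the boundedness of $F(s)$ and the $W^s$-boundedness of $T$, are essentially bookkeeping once this analytic framework is in place.
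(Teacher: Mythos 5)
The paper offers no proof of this proposition: it is stated with a bare citation to \cite{gracia2013elements}, Lemma 10.22, so there is no in-text argument to compare yours against. Your proposal is correct and is, in substance, the standard proof of that cited lemma. The reduction to $\delta'=[\Delta^{1/2},\cdot\,]$ via the preceding theorem, the identity $\sigma(T)=\Delta^{1/2}T\Delta^{-1/2}=T+\delta'(T)\Delta^{-1/2}$ together with the observation that $\sigma$ preserves $\Dom^{\infty}(\delta')$ (since $\Delta^{-1/2}$ commutes with $\Delta^{1/2}$, so $\delta'(\delta'{}^{j}(T)\Delta^{-j/2})=\delta'{}^{j+1}(T)\Delta^{-j/2}$), the passage to negative integers by adjoints using $\delta'(S)^{*}=-\delta'(S^{*})$, and the three-lines interpolation for non-integer $s$ are all sound, and your index bookkeeping $\sigma^{k}(\delta'{}^{j}(T))\Delta^{-j/2}$ is consistent. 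The one obstacle you flag, producing enough analytic vectors for the interpolation step, is in fact easily dispatched here: since $\Delta\geq 1$, for $\xi\in W^{\infty}$ one has $\|\Delta^{z/2}\xi\|\leq\|\Delta^{N/2}\xi\|$ on the strip $|\mathrm{Re}\,z|\leq N$, so $z\mapsto\langle F(z)\xi,\eta\rangle$ is bounded and holomorphic for $\xi,\eta\in W^{\infty}$, and the boundary bounds are uniform in $\mathrm{Im}\,z$ because $\Delta^{it/2}$ is unitary; the three-lines lemma then applies directly. Alternatively, one may simply invoke that the scale $\{W^{s}\}$ is a complex interpolation family, which packages this step.
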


%\begin{proof}[Sketch of the proof]The idea is to show by induction 
	%that 
	%\begin{displaymath}
		%\Delta^{\frac{n}{2}}\pi(a)\Delta^{-\frac{n}{2}} = \sum_{j=0}^{n}(-1)^{j}\binom{n}{j}\delta^{j}(\pi(a))\mathfrak{b}(D)^{j}
	%\end{displaymath}
%	where $\delta$ is the derivation \eqref{delta} and $\mathfrak{b}(D)= D(1+D^{2})^{-\frac{1}{2}}$ is the bounded transform of $D$. In particular $\Delta^{\frac{n}{2}}\pi(a)\Delta^{-\frac{n}{2}}$ is bounded for any $n\in \mathbb{N}$ ($0$-included) and so 
	%\begin{displaymath}
		%\norm{\pi(a)\xi}_{W^{n}} = \norm{\Delta^{\frac{n}{2}}\pi(a)\xi} =  \norm{\Delta^{\frac{n}{2}}\pi(a)\Delta^{-\frac{n}{2}}\Delta^{\frac{n}{2}}\xi}\leq \norm{\Delta^{\frac{n}{2}}\pi(a)\Delta^{-\frac{n}{2}}}\norm{\xi}_{W^{n}}.
	%\end{displaymath}
%	By interpolation, one gets the thesis for general exponents. 
%\end{proof}

A theorem of Higson relates the notion of regularity with the existence of an algebra of generalised differential operators.

\begin{defn}[cf.\cite{higson2003local}, definition 5.1]	 An $\mathbb{N}$-filtered subalgebra $\mathcal{D}\subseteq \textup{Op}(\Delta)$ is called an algebra of \emph{generalized differential operators} (GDO) if it is closed under the derivation $[\Delta, \cdot]$ and satisfies
	\begin{displaymath}
		[\Delta,\mathcal{D}^{k}]\subseteq \mathcal{D}^{k+1}
	\end{displaymath}
	for any $k\in \mathbb{N}$. 
\end{defn}

\begin{thm}[cf. \cite{higson2003local}]\label{thmreg}A spectral triple $(\mathcal{A}, H,D)$ over a unital $C^{*}$-algebra $A$  is regular if and only if there exists an algebra of generalized differential operators (with respect to $\Delta=1+D^2)$ containing $\pi(\mathcal{A})$ and $[D,\pi(\mathcal{A})]$ in degree zero. 
\end{thm}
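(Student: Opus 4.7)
The proof proceeds by establishing the two implications separately, with the forward direction being essentially constructive and the reverse direction relying on a resolvent/integral calculus for $\Delta^{1/2}$.

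For the forward direction, assume $(\mathcal{A},H,D)$ is regular. The plan is to build a canonical algebra of generalized differential operators, which I will denote $\mathcal{D}$, as follows. Let $\mathcal{D}^0$ be the subalgebra of $\End(W^\infty)$ generated by $\pi(\mathcal{A})$ and $[D,\pi(\mathcal{A})]$, and define inductively
\begin{equation*}
\mathcal{D}^{k+1} := \mathcal{D}^k + [\Delta, \mathcal{D}^k] + \mathcal{D}^0 \cdot \mathcal{D}^k.
\end{equation*}
Set $\mathcal{D} := \bigcup_k \mathcal{D}^k$. By construction $\mathcal{D}$ is closed under $[\Delta,\cdot]$ and satisfies the filtration property $[\Delta,\mathcal{D}^k]\subseteq\mathcal{D}^{k+1}$, so the only nontrivial point is to show that $\mathcal{D}^k\subseteq\operatorname{Op}^k$. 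The base case $\mathcal{D}^0\subseteq\operatorname{Op}^0$ is exactly Proposition \ref{propreg}. For the inductive step, the key lemma to establish is that if $T\in\Dom^\infty([\Delta^{1/2},\cdot])\cap\operatorname{Op}^0$ then $[\Delta,T]\in\operatorname{Op}^1$; more generally, iterated commutators $[\Delta,[\Delta,\ldots[\Delta,T]\ldots]]$ land in the corresponding $\operatorname{Op}^k$. This follows by writing $\Delta = (\Delta^{1/2})^2$ and computing that $[\Delta,T]=\Delta^{1/2}[\Delta^{1/2},T]+[\Delta^{1/2},T]\Delta^{1/2}$, then observing that regularity gives iterated boundedness after conjugation by $\Delta^{-1/2}$.

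For the reverse direction, suppose such an algebra $\mathcal{D}$ exists. I need to prove that $\pi(\mathcal{A})$ and $[D,\pi(\mathcal{A})]$ belong to $\Dom^\infty([\Delta^{1/2},\cdot])$, which by the Higson–Uuye theorem cited above is equivalent to regularity. The strategy is to use the integral representation
\begin{equation*}
\Delta^{1/2} = \frac{1}{\pi}\int_0^\infty \lambda^{-1/2}\, \Delta\,(\Delta+\lambda)^{-1}\, d\lambda,
\end{equation*}
valid on $W^\infty$ in a suitable sense, so that for $T\in\mathcal{D}^0$ the commutator $[\Delta^{1/2},T]$ is represented by an integral whose integrand is controlled by the resolvent identity $[(\Delta+\lambda)^{-1},T] = -(\Delta+\lambda)^{-1}[\Delta,T](\Delta+\lambda)^{-1}$. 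Iterating this identity $N$ times produces an asymptotic expansion
\begin{equation*}
[\Delta^{1/2},T] \sim \sum_{k=1}^{N} c_k\, [\Delta,[\Delta,\ldots[\Delta,T]]]\, \Delta^{(1-2k)/2} + R_N,
\end{equation*}
where the iterated commutator sits in $\mathcal{D}^k\subseteq\operatorname{Op}^k$ by hypothesis, so each term lies in $\operatorname{Op}^0$, and the remainder $R_N$ can be shown to lie in $\operatorname{Op}^{-N}$ by controlling the integral tail with the filtration property. Consequently $[\Delta^{1/2},T]\in\operatorname{Op}^0$, and since $[\Delta^{1/2},T]$ again satisfies the same structural property (belonging to a GDO algebra built from $\mathcal{D}$ in a shifted way), we can iterate to conclude $T\in\Dom^\infty([\Delta^{1/2},\cdot])$. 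The same analysis applies verbatim to $[D,\pi(\mathcal{A})]\subseteq\mathcal{D}^0$.

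The main obstacle will be the reverse direction: justifying the integral representation on $W^\infty$, controlling the convergence of the asymptotic expansion for $[\Delta^{1/2},T]$, and tracking the order of the remainder $R_N$ uniformly so that the bootstrapping argument closes. These are the technical details carried out by Higson in \cite{higson2003local}, and in our presentation we would invoke them while recording the algebraic statement that $\mathcal{D}$ built from $\pi(\mathcal{A})$ and $[D,\pi(\mathcal{A})]$ provides precisely the data needed.
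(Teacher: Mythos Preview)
The paper does not give its own proof of this theorem: it is stated with a citation to Higson and no argument is supplied. Your sketch is a faithful outline of Higson's standard proof, so there is nothing to compare against here beyond noting that you are reproducing the cited source. The forward-direction construction you give is essentially the canonical algebra $\mathcal{E}$ that the paper records (again without proof) in the subsequent Theorem~\ref{thm2}; note, however, that your recursion $\mathcal{D}^{k+1}=\mathcal{D}^k+[\Delta,\mathcal{D}^k]+\mathcal{D}^0\cdot\mathcal{D}^k$ omits the cross-products $\mathcal{D}^j\cdot\mathcal{D}^{k-j}$ for $j\geq 1$, so as written $\mathcal{D}$ need not be closed under multiplication---compare the paper's recursion for $\mathcal{E}^k$, which includes $\sum_{j=1}^{k-1}\mathcal{E}^j\mathcal{E}^{k-j}$. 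This is a minor repair, and otherwise your reverse-direction strategy via the integral formula for $\Delta^{1/2}$ and the iterated resolvent expansion is exactly the mechanism in the cited reference.
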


We maintain the notation $\Delta=1+D^2$; then observe that  $D\in \textup{Op}^{1}(\Delta)$. Indeed for any $\xi \in W^{\infty}$ and $s\in\mathbb{R}$ we have
	\begin{displaymath}
		\begin{split}
			\norm{D\xi}^{2}_{W^{s}} & = \langle D^{2}(1+D^{2})^{s}\xi,\xi\rangle = \langle \mathfrak{b}(D)^{2} (1+D^{2})^{s+1}\xi,\xi \rangle\\
			& = \norm{\mathfrak{b}(D)^{2}\Delta^{\frac{s+1}{2}}\xi}^{2}\leq \norm{\mathfrak{b}(D)}^{2}\norm{\xi}_{W^{s+1}}^{2}
		\end{split}
	\end{displaymath}
where $\mathfrak{b}(D)= D(1+D^{2})^{-\frac{1}{2}}$ is the bounded transform of $D$. Now define the $\mathbb{N}$-filtered algebra $\mathcal{E}\subseteq \End(W^{\infty})$ inductively by:
\begin{enumerate}
	\item $\mathcal{E}^{0}$ is the subalgebra generated by $\pi(\mathcal{A})$ and $[D,\pi(\mathcal{A})]$,
	\item $\mathcal{E}^{1}= \mathcal{E}^{0} + [\Delta, \mathcal{E}^{0}] + \mathcal{E}^{0}[\Delta, \mathcal{E}^{0}]$, 
	\item $\mathcal{E}^{k} = \mathcal{E}^{k-1} + \sum_{j=1}^{k-1}\mathcal{E}^{j}\mathcal{E}^{k-j} + [\Delta, \mathcal{E}^{k-1}] + \mathcal{E}^{0}[\Delta, \mathcal{E}^{k-1}]$, for $k\geq 2$.
\end{enumerate}
The following result shows that the $\mathbb{N}$-filtered algebra $\mathcal{E}$ is the minimal GDO algebra among those considered in theorem \ref{thmreg}.
\begin{thm}[cf. \cite{conneslocalindexformula,higson2003local,uuye2011pseudo}]\label{thm2}Let $(\mathcal{A}, H,D)$ be a spectral triple over $A$. 	\begin{enumerate}
		\item If  $(\mathcal{A}, H,D)$ is regular, then  $\mathcal{E}^{k}\subseteq \textup{Op}^{k}$ for any $k\geq 0$. 
		\item If $\pi(\mathcal{A})W^{\infty}\subseteq W^{\infty}$ and $\mathcal{E}^{k}\subseteq \textup{Op}^{k}$, for any $k\geq 0$, then $(\mathcal{A}, H,D)$ is regular. 
	\end{enumerate} 
\end{thm}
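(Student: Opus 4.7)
The proof of both directions proceeds by comparing the inductively defined $\mathbb{N}$-filtered algebra $\mathcal{E}$ with an algebra of generalised differential operators, invoking Theorem \ref{thmreg} which characterises regularity through the existence of such a GDO algebra containing $\pi(\mathcal{A})$ and $[D,\pi(\mathcal{A})]$ in degree zero. By its inductive construction, $\mathcal{E}$ is in effect the smallest candidate for such a GDO, so both implications reduce to containment statements.

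\smallskip

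\emph{Part (1).} Assume the triple is regular. Theorem \ref{thmreg} supplies some GDO algebra $\mathcal{D}\subseteq \textup{Op}(\Delta)$ with $\pi(\mathcal{A})$ and $[D,\pi(\mathcal{A})]$ contained in $\mathcal{D}^0$. I would show by induction on $k$ that $\mathcal{E}^k\subseteq \mathcal{D}^k$; together with the inclusion $\mathcal{D}^k\subseteq \textup{Op}^k$ built into the definition of a GDO, this yields $\mathcal{E}^k\subseteq \textup{Op}^k$. The base case $k=0$ holds because $\mathcal{D}^0$ is an algebra containing the generators of $\mathcal{E}^0$. For the inductive step, the four types of summands forming $\mathcal{E}^k$ land in $\mathcal{D}^k$ as follows: $\mathcal{E}^{k-1}\subseteq \mathcal{D}^{k-1}\subseteq \mathcal{D}^k$ by filtration; $\mathcal{E}^j\mathcal{E}^{k-j}\subseteq \mathcal{D}^j\mathcal{D}^{k-j}\subseteq \mathcal{D}^k$ by the filtered-algebra property of $\mathcal{D}$ (Lemma \ref{lemma}); $[\Delta,\mathcal{E}^{k-1}]\subseteq [\Delta,\mathcal{D}^{k-1}]\subseteq \mathcal{D}^k$ by the defining GDO closure; and $\mathcal{E}^0[\Delta,\mathcal{E}^{k-1}]\subseteq \mathcal{D}^0\mathcal{D}^k\subseteq \mathcal{D}^k$ again by the filtered-algebra property.

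\smallskip

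\emph{Part (2).} Conversely, assume $\pi(\mathcal{A})W^\infty\subseteq W^\infty$ and $\mathcal{E}^k\subseteq \textup{Op}^k$ for every $k$. First I would observe that $D\in \textup{Op}^1$ preserves $W^\infty$, so each $[D,\pi(a)]=D\pi(a)-\pi(a)D$ preserves $W^\infty$ as well, and hence $\mathcal{E}\subseteq \textup{End}(W^\infty)$ is a well-defined $\mathbb{N}$-filtered subalgebra of $\textup{Op}(\Delta)$. Inspection of the inductive formula shows $[\Delta,\mathcal{E}^k]\subseteq \mathcal{E}^{k+1}$ for all $k\geq 0$, since $\mathcal{E}^{k+1}$ is defined so as to explicitly include the summand $[\Delta,\mathcal{E}^k]$. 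Thus $\mathcal{E}$ itself is a GDO algebra that contains $\pi(\mathcal{A})$ and $[D,\pi(\mathcal{A})]$ in degree zero, and Theorem \ref{thmreg} yields regularity.

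\smallskip

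\emph{Main obstacle.} Once Theorem \ref{thmreg} is taken as given, each direction is really a bookkeeping induction along the three building moves of $\mathcal{E}$; the genuine analytic depth — the pseudo-differential calculus comparing $\textup{ad}_\Delta$ with the derivation $[\Delta^{1/2},\cdot]$ via integral representations such as $\Delta^{1/2}=\frac{1}{\pi}\int_0^\infty \lambda^{-1/2}\Delta(\lambda+\Delta)^{-1}\,d\lambda$ — is entirely absorbed inside Theorem \ref{thmreg}. The subtle verification I foresee is confirming that the recursive clauses for $\mathcal{E}^k$ already yield a genuine filtered algebra without needing supplementation by symmetric terms such as $[\Delta,\mathcal{E}^{k-1}]\mathcal{E}^0$ or by iterated commutators like $[\Delta,[\Delta,\mathcal{E}^{k-2}]]$; these are generated automatically by repeated application of the given moves to lower-degree elements, but one must trace this carefully to justify calling $\mathcal{E}$ a GDO in part (2).
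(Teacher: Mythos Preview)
The paper does not provide its own proof of this theorem; it is stated with a ``cf.'' citation to Connes--Moscovici, Higson, and Uuye and no argument follows. There is therefore nothing in the paper to compare your proposal against.

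Your reduction to Theorem~\ref{thmreg} is the standard route and is essentially correct. One caveat in Part~(1): you write that the inclusion $\mathcal{D}^k\subseteq\textup{Op}^k$ is ``built into the definition of a GDO''. As the paper states the definition, a GDO is merely an $\mathbb{N}$-filtered subalgebra $\mathcal{D}\subseteq\textup{Op}(\Delta)$ closed under $[\Delta,\cdot]$ with $[\Delta,\mathcal{D}^k]\subseteq\mathcal{D}^{k+1}$; the compatibility $\mathcal{D}^k\subseteq\textup{Op}^k$ is not asserted as part of the definition. In the cited sources that inclusion is in fact the substantial analytic result (Higson's ``basic estimate''), and it is precisely what underlies Theorem~\ref{thmreg}. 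So the step is valid, but the justification should appeal to Higson's theorem rather than to the definition. Your Part~(2) is fine: the inductive clauses for $\mathcal{E}^k$ do make $\mathcal{E}$ a filtered algebra with $[\Delta,\mathcal{E}^k]\subseteq\mathcal{E}^{k+1}$, and together with the hypothesis $\mathcal{E}^k\subseteq\textup{Op}^k$ this exhibits $\mathcal{E}$ itself as a GDO, so Theorem~\ref{thmreg} applies directly.
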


Thanks to Higson's theorem one can prove that the exterior product of regular triples is regular. 
Let us recall the construction of the exterior product of odd spectral triples.
This is the unique case we are concerned. For the general (graded) case the reader may consult \cite[p. 434]{connes1995noncommutative} and \cite{uuye3}.
Let $(\mathcal{A}_1,H_1,D_1)$ and $(\mathcal{A}_2,H_2,D_2)$ be two spectral triples over the $C^*$-algebras $A_1$ and $A_2$. 
The product spectral triple over $A_1\otimes A_2$ is the even spectral triple $\big{(}\mathcal{A}_1 \otimes_{\operatorname{alg}}\mathcal{A}_2,  (H_1 \otimes H_2) \oplus (H_1 \otimes H_2), D_1 \times D_2\big{)}$ constructed in the following way:
\begin{itemize}
\item the tensor product algebra is represented on the Hilbert space $(H_1 \otimes H_2) \oplus (H_1 \otimes H_2)$ with standard grading by two copies of the tensor product representation.
\item To construct the Dirac operator one first proves that the operator 
\begin{displaymath}
	\left(\begin{matrix}
		0 &  D_1\otimes 1_{H_2} -i1_{H_1} \otimes D_2\\
		D_1\otimes 	1_{H_2} +i1_{H_{1}}\otimes  D_2  & 0
	\end{matrix}\right) 
\end{displaymath} 
on its natural initial domain is essentially selfadjoint. Then, its closure, denoted with $D_1 \times D_2$ is the desired operator.
\end{itemize}
The following result is implicitly contained in \cite{conneslocalindexformula,higson2003local} and explicitly proven in \cite{uuye2011pseudo}. 
\begin{thm}
The product of regular spectral triples is regular.
%In the particular case of two odd, regular spectral triples $(\mathcal{A}_1,H_1,D_1)$ and $(\mathcal{A}_2,H_2,D_2)$,
 %the product triple 
 %is identified with the spectral triple
 %$\big{(}\mathcal{A}_1 \otimes_{\operatorname{alg}}\mathcal{A}_2,  (H_1 \otimes H_2) \oplus (H_1 \otimes H_2), D_1 \times D_2\big{)}$ having for Dirac operator the closure of  the essentially selfadjoint operator
%\begin{displaymath}
	%D_1 \times D_2=\left(\begin{matrix}
		%0 &  D_1\otimes 1_{H_2} -i1_{H_1} \otimes D_2\\
		%D_1\otimes 	1_{H_2} +i1_{H_{1}}\otimes  D_2  & 0
	%\end{matrix}\right).
%\end{displaymath} 
\end{thm}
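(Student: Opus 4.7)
The plan is to apply Higson's Theorem~\ref{thm2}, constructing a GDO algebra for the product triple out of the GDO algebras $\mathcal{E}_1$ and $\mathcal{E}_2$ witnessing regularity of the two factor triples. Write $\Delta_i := I+D_i^2$ and $\widetilde{\Delta} := I+(D_1\times D_2)^2$ for the respective Laplacians.

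The starting observation is structural: since $(D_1\times D_2)^2=(D_1^2\otimes I+I\otimes D_2^2)\otimes I_2$, the product Laplacian decouples as $\widetilde{\Delta}=(\Delta_1\otimes I+I\otimes\Delta_2-I)\otimes I_2$, so its Sobolev theory is governed by the two commuting positive self-adjoint operators $\Delta_1\otimes I$ and $I\otimes\Delta_2$, each bounded below by $I$. The single analytic ingredient I would prove is a tensor estimate: \emph{if $T_i\in\operatorname{Op}^{t_i}(\Delta_i)$ for $i=1,2$, then $T_1\otimes T_2\in\operatorname{Op}^{t_1+t_2}(\widetilde{\Delta})$}. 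For $t_1,t_2\geq 0$ this follows from the joint spectral theorem combined with the elementary commuting-operators inequality $(\Delta_1\otimes I)^a(I\otimes\Delta_2)^b\leq(\Delta_1\otimes I+I\otimes\Delta_2)^{a+b}$. Arbitrary real orders are then obtained by duality.

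Once the tensor estimate is in hand, I would define
\[
\widetilde{\mathcal{E}}^k := \sum_{i+j\leq k} M_2\!\left(\mathcal{E}_1^i\otimes_{\mathrm{alg}}\mathcal{E}_2^j\right) \subseteq \operatorname{End}(\widetilde{W}^{\infty}),
\]
which the tensor estimate places inside $\operatorname{Op}^k(\widetilde{\Delta})$, because $\mathcal{E}_\ell^m\subseteq\operatorname{Op}^m(\Delta_\ell)$ by Higson's theorem applied to each factor. The Leibniz rule combined with the GDO shift $[\Delta_\ell,\mathcal{E}_\ell^m]\subseteq\mathcal{E}_\ell^{m+1}$ yields $[\widetilde{\Delta},\widetilde{\mathcal{E}}^k]\subseteq\widetilde{\mathcal{E}}^{k+1}$, so $\widetilde{\mathcal{E}}$ is a GDO algebra. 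The generator condition is then routine: $\pi(a_1\otimes a_2)=(\pi_1(a_1)\otimes\pi_2(a_2))\oplus(\pi_1(a_1)\otimes\pi_2(a_2))$ lies in $\widetilde{\mathcal{E}}^0$, while Leibniz gives $[D_1\times D_2,\pi(a_1\otimes a_2)]$ as an off-diagonal matrix whose entries are $[D_1,\pi_1(a_1)]\otimes\pi_2(a_2)\pm i\pi_1(a_1)\otimes[D_2,\pi_2(a_2)]$, both of which sit in $\mathcal{E}_1^0\otimes\mathcal{E}_2^0\subseteq\widetilde{\mathcal{E}}^0$ by regularity of the two factors.

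Together with the straightforward verification that $\pi(\mathcal{A}_1\otimes_{\mathrm{alg}}\mathcal{A}_2)$ preserves $\widetilde{W}^{\infty}$, Higson's Theorem~\ref{thm2}(2) delivers the regularity of the product triple. The real obstacle is the analytic tensor estimate for $\widetilde{\Delta}$; everything after it is bookkeeping with filtered algebras and $2\times 2$ matrix blocks.
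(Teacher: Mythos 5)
Your strategy coincides with the paper's: both proofs produce a GDO algebra for the product by ``multiplying'' the GDO algebras of the two factors and then invoke Higson's criterion (Theorem~\ref{thm2}/\ref{thmreg}). The difference is in how the details are handled. The paper stays at the level of a citation: it delegates the analytic content to \cite{uuye2011pseudo} and spends its effort reconciling the multigraded formalism (odd triples as $1$-multigraded, their product as $2$-multigraded, then $2$-periodicity) with the explicit $2\times2$ block description of $D_1\times D_2$. You instead work directly with the block form, which lets you skip the multigrading bookkeeping entirely, and you isolate the one genuinely analytic ingredient --- the tensor estimate $T_1\otimes T_2\in\operatorname{Op}^{t_1+t_2}(\widetilde\Delta)$ --- that makes the filtered-algebra bookkeeping go through. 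That is a clean and correct organization of the argument, and your identification of $\widetilde\Delta=(\Delta_1\otimes I+I\otimes\Delta_2-I)\otimes I_2$ and the Leibniz computation $[\widetilde\Delta,T_1\otimes T_2]=[\Delta_1,T_1]\otimes T_2+T_1\otimes[\Delta_2,T_2]$ are exactly right. One caveat: your sketch of the tensor estimate is a little optimistic. The joint spectral theorem plus the scalar inequality $x^ay^b\le(x+y)^{a+b}$ handles the comparison of the pure Laplacian powers, i.e.\ it shows $\Delta_1^{a}\otimes\Delta_2^{b}\in\operatorname{Op}^{2a+2b}(\widetilde\Delta)$ and settles the case $s=0$; but for general Sobolev index $s$ one must control $\widetilde\Delta^{s/2}(T_1\otimes I)\widetilde\Delta^{-(s+t_1)/2}$ with $T_1$ \emph{not} commuting with $\Delta_1$, which requires the commutator-expansion or interpolation machinery of \cite{conneslocalindexformula,uuye2011pseudo} rather than a purely spectral argument. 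Since you flag this as the real obstacle and the statement is indeed proved in the cited references, this is a gap in the write-up rather than in the approach.
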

\begin{proof}
The algebra of generalised differential operators sufficing for the regularity of the product is obtained by forming the product of any of the two algebras of the corresponding factors. The product of these operators has to take into account the gradings. 
Indeed to treat the even and odd case at the same footing, graded tensor products and multigradings (see \cite{higson2000analytic} and also \cite{uuye2011pseudo,uuye3}) are used.
For the odd $\times$ odd case the identification of the product triple defined in terms of graded tensor products with the triple
$\big{(}\mathcal{A}_1 \otimes_{\operatorname{alg}}\mathcal{A}_2,  (H_1 \otimes H_2) \oplus (H_1 \otimes H_2), D_1 \times D_2\big{)}$
described above is performed as follows: a regular odd spectral triple corresponds canonically to a regular $1$-multigraded triple. The product of two of them is a regular $2$-multigraded spectral triple which, by $2$-periodicity of multigraded triples exactly corresponds  to $\big{(}\mathcal{A}_1 \otimes_{\operatorname{alg}}\mathcal{A}_2,  (H_1 \otimes H_2) \oplus (H_1 \otimes H_2), D_1 \times D_2\big{)}$
(this involves a change of orientation in the Hilbert space $\mathbb{C} \oplus \mathbb{C})$.
 \end{proof}

	\begin{ack}The authors would like to thank S. Azzali and N. Higson  for some useful discussions.  This work was partially supported by 
		the ERC Advanced Grant 669240 QUEST \textquotedblleft Quantum Algebraic Structures and Models\textquotedblright\,.	D. G. and T. I. acknowledge the MIUR Excellence Department Project awarded to the Department of Mathematics, University of Rome Tor Vergata, CUP E83C18000100006 and the University of Rome Tor Vergata funding scheme ``Beyond Borders'', CUP E84I19002200005. 
\end{ack}

\bibliography{bibliography}{}

\begin{thebibliography}{10}

\bibitem{aiello2017spectral}
Valeriano Aiello, Daniele Guido, and Tommaso Isola.
\newblock Spectral triples for noncommutative solenoidal spaces from
  self-coverings.
\newblock {\em Journal of Mathematical Analysis and Applications},
  448(2):1378--1412, 2017.

\bibitem{albuquerque2002clifford}
Helena Albuquerque and Shahn Majid.
\newblock Clifford algebras obtained by twisting of group algebras.
\newblock {\em Journal of Pure and Applied Algebra}, 171(2-3):133--148, 2002.

\bibitem{anto18localised}
Paolo Antonini, Sara Azzali, and Georges Skandalis.
\newblock The {B}aum-{C}onnes conjecture localised at the unit element of a
  discrete group.
\newblock {\em Compos. Math.}, 156(12):2536--2559, 2020.

\bibitem{MR1865513}
William Arveson.
\newblock {\em A short course on spectral theory}, volume 209 of {\em Graduate
  Texts in Mathematics}.
\newblock Springer-Verlag, New York, 2002.

\bibitem{baaj1983theorie}
Saad Baaj and Pierre Julg.
\newblock Th{\'e}orie bivariante de {K}asparov et op{\'e}rateurs non born{\'e}s
  dans les {C}*-modules hilbertiens.
\newblock {\em CR Acad. Sci. Paris S{\'e}r. I Math}, 296(21):875--878, 1983.

\bibitem{bedos2011discrete}
Erik Bedos and Roberto Conti.
\newblock On discrete twisted {C}*-dynamical systems, {H}ilbert {C}*-modules
  and regularity.
\newblock {\em M{\"u}nster J. Math.}, 5:183--208, 2012.

\bibitem{bellissard2010dynamical}
Jean~V Bellissard, Matilde Marcolli, and Kamran Reihani.
\newblock Dynamical systems on spectral metric spaces.
\newblock {\em Preprint arXiv:1008.4617}, 2010.

\bibitem{busby1970representations}
Robert~C Busby and Harvey~A Smith.
\newblock Representations of twisted group algebras.
\newblock {\em Transactions of the American Mathematical Society},
  149(2):503--537, 1970.

\bibitem{MR4328052}
Branimir \'{C}a\'{c}i\'{c} and Bram Mesland.
\newblock Gauge theory on noncommutative {R}iemannian principal bundles.
\newblock {\em Comm. Math. Phys.}, 388(1):107--198, 2021.

\bibitem{connes1994noncommutative}
Alain Connes.
\newblock {\em Noncommutative {G}eometry}.
\newblock Academic Press, 1994.

\bibitem{connes1995noncommutative}
Alain Connes.
\newblock Noncommutative geometry and reality.
\newblock {\em J. {M}ath. {P}hys.}, 36(11):6194--6231, 1995.

\bibitem{connes1996gravity}
Alain Connes.
\newblock Gravity coupled with matter and the foundation of non-commutative
  geometry.
\newblock {\em Communications in Mathematical Physics}, 182(1):155--176, 1996.

\bibitem{connes2013spectral}
Alain Connes.
\newblock On the spectral characterization of manifolds.
\newblock {\em Journal of {N}oncommutative {G}eometry}, 7(1):1--82, 2013.

\bibitem{conneslocalindexformula}
Alain Connes and Moscovici Henri.
\newblock The local index formula in noncommutative geometry.
\newblock {\em Geometric and Functional Analysis}, 5:174--243, 03 1995.

\bibitem{connes2019noncommutative}
Alain Connes and Matilde Marcolli.
\newblock {\em Noncommutative geometry, quantum fields and motives}.
\newblock AMS, 2008.

\bibitem{Cuntz1983}
Joachim Cuntz.
\newblock {K}-theoretic amenability for discrete groups.
\newblock {\em Journal f{\"u}r die reine und angewandte Mathematik},
  344:180--195, 1983.

\bibitem{MR1402012}
Kenneth~R. Davidson.
\newblock {\em {$C^*$}-algebras by example}, volume~6 of {\em Fields Institute
  Monographs}.
\newblock American Mathematical Society, Providence, RI, 1996.

\bibitem{Crainic}
Marius~Crainic Erik van~den Ban.
\newblock Analysis on {M}anifolds.
\newblock Lecture notes:
  \url{https://webspace.science.uu.nl/~ban00101/geoman2017/AS-2017rev.pdf},
  2017.

\bibitem{FaKo}
Thierry Fack and Hideki Kosaki.
\newblock Generalized s-numbers of $\tau$-measurable operators.
\newblock {\em Pacific Journal of Mathematics}, 123(2):269--300, 1986.

\bibitem{gracia2013elements}
Jos{\'e}~M Gracia-Bond{\'\i}a, Joseph~C V{\'a}rilly, and H{\'e}ctor Figueroa.
\newblock {\em Elements of {N}oncommutative {G}eometry}.
\newblock Birkh\"auser, Boston, 2001.

\bibitem{GuIs5}
Daniele Guido and Tommaso Isola.
\newblock Singular traces, dimensions, and {N}ovikov-{S}hubin invariants.
\newblock {\em Operator Theoretical Methods, Proceedings of the 17th Conference
  on Operator Theory Timisoara (Romania) June 23-26, 1998}.

\bibitem{GuIs9}
Daniele Guido and Tommaso Isola.
\newblock Dimensions and singular traces for spectral triples, with
  applications to fractals.
\newblock {\em Journal of Functional Analysis}, 203(2):362--400, 2003.

\bibitem{hawkins2013spectral}
Andrew Hawkins, Adam Skalski, Stuart White, and Joachim Zacharias.
\newblock On spectral triples on crossed products arising from equicontinuous
  actions.
\newblock {\em Mathematica Scandinavica}, 113:262--291, 2 2013.

\bibitem{higson2003local}
Nigel Higson.
\newblock The local index formula in noncommutative geometry.
\newblock In {\em Contemporary developments in algebraic {K}-theory}, pages
  443--536. ICTP Lecture Notes (15), 2003.

\bibitem{higson2006residue}
Nigel Higson.
\newblock The residue index theorem of {C}onnes and {M}oscovici.
\newblock {\em Surveys in Noncommutative Geometry, Clay Math. Proc}, 6:71--126,
  2006.

\bibitem{higson2000analytic}
Nigel Higson and John Roe.
\newblock {\em Analytic {K}-{H}omology}.
\newblock Oxford University Press, 2000.

\bibitem{MR1249482}
Dale Husemoller.
\newblock {\em Fibre bundles}, volume~20 of {\em Graduate Texts in
  Mathematics}.
\newblock Springer-Verlag, New York, third edition, 1994.

\bibitem{iochum2014crossed}
Bruno Iochum and Thierry Masson.
\newblock Crossed product extensions of spectral triples.
\newblock {\em Journal of Noncommutative Geometry}, 10:65--133, 01 2016.

\bibitem{kamber1967flat}
F.~Kamber and Ph. Tondeur.
\newblock Flat bundles and characteristic classes of group-representations.
\newblock {\em American Journal of Mathematics}, 89(4):857--886, 1967.

\bibitem{kato1973continuity}
Tosio Kato.
\newblock Continuity of the map ${S}\mapsto \abs{S}$ for linear operators.
\newblock {\em Proceedings of the Japan Academy}, 49(3):157--160, 1973.

\bibitem{kosaki1992unitarily}
Hideki Kosaki.
\newblock Unitarily invariant norms under which the map ${A}\mapsto \abs{A}$ is
  lipschitz continuous.
\newblock {\em Publications of the Research Institute for Mathematical
  Sciences}, 28(2):299--313, 1992.

\bibitem{Lee2011Introduction}
John~M. Lee.
\newblock {\em Introduction to topological manifolds}, volume 202 of {\em
  Graduate Texts in Mathematics}.
\newblock Springer, New York, second edition, 2011.

\bibitem{Miln}
John Milnor.
\newblock A note on curvature and fundamental group.
\newblock {\em Journal of Differential geometry}, 2(1):1--7, 1968.

\bibitem{packer1989twisted}
Judith~A Packer and Iain Raeburn.
\newblock Twisted crossed products of {C}*-algebras.
\newblock In {\em Mathematical Proceedings of the Cambridge Philosophical
  Society}, volume 106, pages 293--311. Cambridge University Press, 1989.

\bibitem{paterson2014contractive}
Alan~LT Paterson.
\newblock Contractive spectral triples for crossed products.
\newblock {\em Mathematica Scandinavica}, 114:275--298, 02 2014.

\bibitem{MR911880}
N.~Christopher Phillips.
\newblock {\em Equivariant {$K$}-theory and freeness of group actions on
  {$C^*$}-algebras}, volume 1274 of {\em Lecture Notes in Mathematics}.
\newblock Springer-Verlag, Berlin, 1987.

\bibitem{phillips2009freeness}
N~Christopher Phillips.
\newblock Freeness of actions of finite groups on {C}*-algebras.
\newblock {\em Operator structures and dynamical systems}, 503:217--257, 2009.

\bibitem{MR1103376}
Marc~A. Rieffel.
\newblock Proper actions of groups on {$C^*$}-algebras.
\newblock In {\em Mappings of operator algebras ({P}hiladelphia, {PA}, 1988)},
  volume~84 of {\em Progr. Math.}, pages 141--182. Birkh\"{a}user Boston,
  Boston, MA, 1990.

\bibitem{schwieger2015free}
Kay Schwieger and Stefan Wagner.
\newblock {F}ree actions of compact {A}belian groups on {${\rm C}^*$}-algebras,
  part {I}.
\newblock {\em Adv. Math.}, 317:224--266, 2017.

\bibitem{MR4358538}
Kay Schwieger and Stefan Wagner.
\newblock Lifting spectral triples to noncommutative principal bundles.
\newblock {\em Adv. Math.}, 396:Paper No. 108160, 36, 2022.

\bibitem{thiang2016k}
Guo~Chuan Thiang.
\newblock On the {K}-theoretic classification of topological phases of matter.
\newblock In {\em Annales Henri Poincar{\'e}}, volume~17, pages 757--794.
  Springer, 2016.

\bibitem{uuye3}
Otgonbayar Uuye.
\newblock Local index theorem in noncommutative geometry.
\newblock {P}h.{D} Thesis:
  \url{https://etda.libraries.psu.edu/files/final_submissions/6381}, 2009.

\bibitem{uuye2011pseudo}
Otgonbayar Uuye.
\newblock Pseudo-differential operators and regularity of spectral triples.
\newblock {\em Perspectives on noncommutative geometry}, 61:153--163, 2011.

\bibitem{wagner2015noncommutative}
Stefan Wagner.
\newblock On noncommutative principal bundles with finite abelian structure
  group.
\newblock {\em Journal of Noncommutative Geometry}, 8(4):987--1022, 2014.

\end{thebibliography}
\bibliographystyle{plain}

\end{document}